\documentclass[a4paper,11pt,twoside]{article}
\usepackage{eurosym}
\usepackage{geometry}
\usepackage[utf8]{inputenc}
\usepackage{lmodern}
\usepackage[T1]{fontenc}
\usepackage{dsfont}
\usepackage{amsmath,amssymb,amsthm,empheq,cases}
\usepackage{hyperref}
\usepackage{tikz}
\usepackage{graphicx}
\usepackage{url}

\hypersetup{colorlinks,
            citecolor=red, 
            filecolor=black,
            linkcolor=blue,
            urlcolor=black}
\def \dis {\displaystyle}
\def \NN {\mathbb N}

\def \RR {\mathbb R}
\def \CC {\mathbb C}

\def \B {\mathcal{B}}

\def \D {\mathcal{D}}
\def \E {\mathcal{E}}

\def \L {\mathcal{L}}

\def \ecart {\noalign{\medskip}}
\theoremstyle{definition}
\newtheorem{Th}{Theorem}[section]
\newtheorem{Prop}[Th]{Proposition}
\newtheorem{Lem}[Th]{Lemma}

\newtheorem{Def}[Th]{Definition}
\newtheorem{Rem}[Th]{Remark}

\def \refD #1{Definition~\ref{#1}}
\def \refT #1{Theorem~\ref{#1}}
\def \refL #1{Lemma~\ref{#1}}

\def \refP #1{Proposition~\ref{#1}}

\title{Analytic semigroup generated by the dispersal process of a sylvatic transmission model of Chagas disease}

\author{Narimene Benarbia, Rabah Labbas, Tewfik Mahdjoub, Alexandre Thorel \medskip \\
{\scriptsize N. B. \& T. M.: NLAAML, Abou-Bekr Belka\"{i}d University, Tlemcen, 13000, Algeria} \\
{\scriptsize narimene.benarbia@univ-tlemcen.dz, tewfik.mahdjoub@univ-tlemcen.dz} \\
{\scriptsize R. L. \& A. T.: Université Le Havre Normandie, Normandie Univ, LMAH UR 3821, 76600 Le Havre, France.} \\
{\scriptsize rabah.labbas@univ-lehavre.fr, alexandre.thorel@univ-lehavre.fr}}

\date{\empty}

\begin{document}

\maketitle\vspace{-0.5cm}

\begin{abstract}
In this work, we develop a new biological transmission model for Chagas disease. This model, set in two juxtaposed habitats with skew Brownian motion conditions at the interface, is composed of two reaction-diffusion equations and takes into account the sylvatic transmission. We write it as an abstract perturbed Cauchy problem using operator theory. Then, we show that the main operator, which models the dispersal process, generates an analytic semigroup in an adequate Banach space. \\
\textbf{Key Words and Phrases}: Chagas disease, reaction-diffusion equations, skew Brownian motion, sylvatic transmission, analytic semigroup. \\
\textbf{2020 Mathematics Subject Classification}: 12H20, 35K57, 47B12, 47D06, 60J70.  
\end{abstract} 

\section{Introduction}

According to World Health Organization (WHO) statistics, between six and seven million people of the world population are affected by Chagas disease see \cite{telleria2017} and \cite{who2023}. The disease, endemic in Latin
America with a morbidity of 12 000 deaths/year, has seen its geographical
expansion reach the countries of North America, Europe, Australia and Asia 
\cite{paho2023}. On the African continent, the disease is present, for instance, in Gabon and Tanzania with less than 900 cases/country.

Chagas disease or American trypanosomiasis is a vector-borne disease caused
by the flagellate protozoan parasite \textit{Trypanosoma cruzi} (\textit{T.
cruzi}). This parasite can infect a wide range of mammalian hosts (including
humans) or domestic or wild birds. The main mode of transmission is contact
with insect vectors called triatominae. When the latter live in shelters
close to human dwellings, the transmission of the parasite is said to be
domestic. If they live in the nests of mammals or undomesticated birds, the
transmission is sylvatic. Some vector species such as \textit{T. Dimidiata} have
adapted to both domestic and sylvatic habitats: transmission in this case is
peridomestic.

During the typical infection cycle, called the kissing bug, an infected
triatomine takes a host's blood meal and releases the parasite in their
feces near the bite. The parasite enters the host through the wound or an
intact mucous membrane, such as the conjunctiva: this is called sterocorary
transmission. An infected host also transmits the parasite to a healthy
vector during blood meals and the parasite then resides in the intestine of
the vector. Transmission in hosts can also occur congenitally, (vertical
transmission from infected mother to child) and orally.

Vector control remained the main strategy to counter the spread of the
disease. It was based on the regular spraying of insecticide in the affected
villages. This fight, carried out jointly by several Latin American
countries within the framework of intergovernmental projects, cost US\$ 30
million annually without being able to achieve the objectives set by the
WHO. Indeed, areas where the presence of the vector has been drastically
reduced, have seen the installation of a process of re-infestation by
triatomines \cite{arias2023}.

The objective of mathematical modeling is therefore to try to explain this
phenomenon and then to determine the main demographic parameters that make a
triatomine invasion successful in order to allow decision-makers to control these parameters. 
It was based on various mathematical tools selected according to the established objectives. To demonstrate that, in the sylvatic (wild) case, oral contamination is as significant as classical transmission via bites during a blood meal, \cite{kribs2006} proposed an SI model based on ordinary differential equations (ODE). This model incorporates a prey-predator sub-model where the vector seeks a host for a blood meal, while the host -generally insectivorous- feeds on the vector.
Model analysis shows that if the vector-host contact rate rapidly reaches saturation (the host reaches satiety quickly), then the vector population exhibits two stable equilibrium states: one at high vector density and the other at low vector density. Controlling the infection by shifting it from high to low density involves crossing a critical threshold. If this threshold is not reached, the population returns to its equilibrium state. The drawback of this model is that it does not take the spatial component into account.\\
The geographical habitat of vectors was considered in the model proposed by \cite{mesk2016}. This work follows a "population ecology" approach and places the vector population in a scenario of invading a pristine sylvatic space. This biological invasion process is modeled using traveling waves, which are solutions to a system of integro-difference equations. Using this tool, the invasion speed representing the displacement velocity of traveling waves moving from colonized to non-colonized space is estimated for the vector population. However, the model cannot conclude on the invasion speed of \textit{T. cruzi}, which is not necessarily the same as its vector's invasion speed. Furthermore, this model assumes that once a habitat point is colonized, it cannot be decolonized. This implies a certain advection of the vector flux, which is not observed in the field.\\
Still aiming to estimate the spread speed of the \textit{T. cruzi} parasite in the sylvatic case, practical studies have been developed. A first cellular automaton (CA) model, constructed by \cite{crawford2013}, divides the geographic area between Mexico and the southern United States into a grid of 9,376 contiguous cells. In each cell, two systems of differential equations are written, integrating the contacts of two vector/host pairs. These ODE systems, solved numerically, allow for the deduction of \textit{T. cruzi} prevalence in each cell. On the grid, prevalences exceeding 7\% define infected cells, enabling the deduction of the disease's invasion speed. Although Kribs' CA provides invasion speeds depending on the relief of the patch, the model does not explicitly inform decision-makers on which factor to act -vector or hosts- to stop the infection.\\
To answer this latter question, \cite{elsaadi2020} developed an agent-based model (multi-agent model) accounting for more complex interactions between hosts and vectors. Several infection inoculation scenarios on a 100x100 cell grid were monitored, and numerical simulations show that the infection is initially carried by vectors and then persists thanks to the hosts. Consequently, disease control should not be limited to insecticide use but must involve action on the hosts, especially since the model shows that vertical transmission has a significant impact on maintaining high prevalence.\\ 
 In the context of peridomestic transmission, a model based on reaction-diffusion equations was proposed by \cite{dib2021}. This type of partial differential equation (PDE) integrates both spatial dispersion processes and the demographics of vectors and hosts. 
 The model defines a buffer zone between the village and the source of infection (the forest), where the \textit{T. cruzi} parasite is present following Brownian motion. Each habitat point is not permanently colonized, which more closely reflects field reality. The authors demonstrate that the problem is mathematically well-posed, providing the opportunity for subsequent numerical exploitation.\\
In the present work, we maintain the principle of the existence of Brownian motion of the parasite on either side of the boundary between the infected zone and the healthy zone in the case of sylvatic transmission. Vertical transmission of the parasite in hosts is also integrated into the reaction-diffusion equations.
\\
Sylvatic transmission is considered over a domain $\Omega =\left[-\ell,L\right]\times\left[ 0,1\right] \subset \mathbb{R}^2$, representing the natural habitat of the vectors and the hosts, considered in two health states: susceptible and infected. The susceptible individuals live on $\Omega _{S}=\left[ 0,L\right]\times\left[ 0,1\right]$ while the infected one are on $\Omega _{I}=\left[ -\ell,0\right] \times\left[0,1\right]$. Thus, the spatio-temporal dynamics of vector and host populations induce skew Brownian motion conditions of individuals across the common border $\Gamma =\left\{0\right\} \times\left[ 0,1\right]$, see equations \eqref{asym} below. On the other hand, the demographic and spatial dispersal processes, on the juxtaposed subdomains $\Omega_{S}$ and $\Omega _{I}$ are described by reaction-diffusion equations, see \eqref{Syst Reaction-diffusion} below. The system obtained is then written, in an adequate functional space, in the form of an abstract differential equation. \\
Then, our aim is to show that the principal dispersal operator generates an analytic semigroup.

\section{The model}

\subsection{Demography and disease transmission}

The life cycle of the Triatominae consists of seven stages of development:
an egg stage, five larval stages and an adult stage. Following \cite{menu2010}, the development from the egg to the fifth larval stage is considered as a single stage called the juvenile stage. Let denote respectively by $J(t,x,y)$, $A(t,x,y)$ and $H(t,x,y)$, the densities of juvenile vectors, adult vectors and hosts, at time $t$ at point $(x,y)$. Two health states are considered: susceptible individuals, non-carriers of the \textit{T.Cruzi} parasite, of densities $J_{S}$, $A_{S}$ and $H_{S}$ and infected individuals, carriers of the parasite, of densities $J_{I}$, $A_{I}$
and $H_{I}$.
\\
Between $t$ and $t+dt$, the juveniles having survived until $t$ with a
probability $\sigma _{J}$ will pass to the adult stage with a probability $\tau _{J}$ or will remain juveniles with a probability $(1-\tau _{J})$. Adults that survived with probability $\sigma _{A}$ will lay eggs with fecundity rate $f_{A}$. The host mammals have a survival rate $\sigma _{H}$ and a fecundity $f_{H}$. For simplicity, we will consider that all these demographic parameters are constant. Their interval are given in Table \ref{tab:my_label}. The life cycles of vectors and hosts are schematized in Figure \ref{fig:my_label} below.

\begin{figure}[ht]
\begin{center}
\tikzset{every picture/.style={line width=0.75pt}} %set default line width to 0.75pt        

\begin{tikzpicture}[x=0.75pt,y=0.75pt,yscale=-1,xscale=1]
%uncomment if require: \path (0,400); %set diagram left start at 0, and has height of 400

%Curve Lines [id:da155187887068567] 
\draw    (188,121) .. controls (218.85,81.2) and (363.54,70.11) .. (407.35,120.24) ;
\draw [shift={(408,121)}, rotate = 229.86] [color={rgb, 255:red, 0; green, 0; blue, 0 }  ][line width=0.75]    (10.93,-3.29) .. controls (6.95,-1.4) and (3.31,-0.3) .. (0,0) .. controls (3.31,0.3) and (6.95,1.4) .. (10.93,3.29)   ;
%Curve Lines [id:da18172205256906215] 
\draw    (408,152) .. controls (378.15,199.76) and (258.21,204.95) .. (192.98,153.78) ;
\draw [shift={(192,153)}, rotate = 38.66] [color={rgb, 255:red, 0; green, 0; blue, 0 }  ][line width=0.75]    (10.93,-3.29) .. controls (6.95,-1.4) and (3.31,-0.3) .. (0,0) .. controls (3.31,0.3) and (6.95,1.4) .. (10.93,3.29)   ;
%Curve Lines [id:da876864303494783] 
\draw    (418.05,116.96) .. controls (484.89,116.01) and (479.37,165.9) .. (417,156) ;
\draw [shift={(416,117)}, rotate = 358.34] [color={rgb, 255:red, 0; green, 0; blue, 0 }  ][line width=0.75]    (10.93,-3.29) .. controls (6.95,-1.4) and (3.31,-0.3) .. (0,0) .. controls (3.31,0.3) and (6.95,1.4) .. (10.93,3.29)   ;
%Curve Lines [id:da3028630630423357] 
\draw    (178,119) .. controls (116.31,110.04) and (105.11,170.39) .. (183.81,155.23) ;
\draw [shift={(185,155)}, rotate = 168.69] [color={rgb, 255:red, 0; green, 0; blue, 0 }  ][line width=0.75]    (10.93,-3.29) .. controls (6.95,-1.4) and (3.31,-0.3) .. (0,0) .. controls (3.31,0.3) and (6.95,1.4) .. (10.93,3.29)   ;
%Curve Lines [id:da6893767255986205] 
\draw    (283,250) .. controls (221.31,241.05) and (210.11,301.39) .. (288.81,286.23) ;
\draw [shift={(290,286)}, rotate = 168.69] [color={rgb, 255:red, 0; green, 0; blue, 0 }  ][line width=0.75]    (10.93,-3.29) .. controls (6.95,-1.4) and (3.31,-0.3) .. (0,0) .. controls (3.31,0.3) and (6.95,1.4) .. (10.93,3.29)   ;
%Curve Lines [id:da14033244462595684] 
\draw    (322.05,249.96) .. controls (388.99,248.97) and (388.37,296.9) .. (326,287) ;
\draw [shift={(320,250)}, rotate = 358.34] [color={rgb, 255:red, 0; green, 0; blue, 0 }  ][line width=0.75]    (10.93,-3.29) .. controls (6.95,-1.4) and (3.31,-0.3) .. (0,0) .. controls (3.31,0.3) and (6.95,1.4) .. (10.93,3.29)   ;

% Text Node
\draw (376,259) node [anchor=north west][inner sep=0.75pt]    {$\sigma _{H} f_{H}$};
% Text Node
\draw (206,260) node [anchor=north west][inner sep=0.75pt]    {$\sigma _{H}$};
% Text Node
\draw (284,192) node [anchor=north west][inner sep=0.75pt]    {$f_{A} \sigma _{A}$};
% Text Node
\draw (285,70) node [anchor=north west][inner sep=0.75pt]    {$\tau \sigma _{J}$};
% Text Node
\draw (62,125) node [anchor=north west][inner sep=0.75pt]    {$( 1-\tau ) \sigma _{J}$};
% Text Node
\draw (470,128) node [anchor=north west][inner sep=0.75pt]    {$\sigma _{A}$};
% Text Node
\draw (282,257.5) node [anchor=north west][inner sep=0.75pt]   [align=left] {Hosts};
% Text Node
\draw (394,126.5) node [anchor=north west][inner sep=0.75pt]   [align=left] {Adults };
% Text Node
\draw (151.5,127) node [anchor=north west][inner sep=0.75pt]   [align=left] {Juveniles};

\end{tikzpicture} \label{fig:my_label}
\end{center}
\end{figure}

The disease is not inherited in vectors, $i.e.$ infected adult triatomines
will give susceptible juveniles. However, it has been found that in mammalian hosts there is a certain vertical transmission rate which we will denote by $\nu$, see \cite{mahdjoub2020}. This will be the only process assumed to be generated by parental effects.\\

When a juvenile vector, respectively adult, is contaminated by an infected
host during the blood meal, the transmission rate of \textit{T. cruzi} is
noted $\Lambda _{J}$, respectively $\Lambda _{A}$. A susceptible host becomes infected via juvenile or adult vectors with a rate $\Lambda _{H}$. Generally, these transmission parameters are density-dependent and are written from the WAIFW matrix (Who Acquires Infection From Whom), see \cite{klepac2011}. For reasons of simplification, we will assume that these rates are constant in the interval $\left] 0,1\right[$, see Table \ref{tab:my_label} below.

\subsection{Habitat of vectors and their hosts}

The habitat of vectors and their hosts is a part of the forest represented
by $\Omega =\left[ -\ell,L\right] \times\left[ 0,1\right] $. It is
divided into two juxtaposed subdomains, $\Omega _{S}=\left[ 0,L\right] \times \left[ 0,\,1\right] $ where susceptible individuals live and $\Omega _{I}=\left[ -\ell,0\right] \times\left[ 0,1\right] $ where infected individuals live. The common boundary is $\Gamma =\left\{ 0\right\} \times\left[ 0,1\right]$.

\begin{figure}[ht]
\centering
\tikzset{every picture/.style={line width=0.75pt}} %set default line width to 0.75pt        
\begin{tikzpicture}[scale=0.8,x=0.75pt,y=0.75pt,yscale=-1,xscale=1]
%uncomment if require: \path (0,300); %set diagram left start at 0, and has height of 300

%Shape: Rectangle [id:dp6769390042213541] 
\draw  [line width=1.5]  (158,63) -- (478,63) -- (478,191) -- (158,191) -- cycle ;
%Straight Lines [id:da009656036603675533] 
\draw [line width=1.5]    (309,64) -- (309,192) ;
%Straight Lines [id:da5418551142872541] 
\draw    (347,33) -- (313.21,77.41) ;
\draw [shift={(312,79)}, rotate = 307.27] [color={rgb, 255:red, 0; green, 0; blue, 0 }  ][line width=0.75]    (10.93,-3.29) .. controls (6.95,-1.4) and (3.31,-0.3) .. (0,0) .. controls (3.31,0.3) and (6.95,1.4) .. (10.93,3.29)   ;
% Text Node
\draw (348,16) node [anchor=north west][inner sep=0.75pt]    {$\Gamma $};
% Text Node
\draw (303,36) node [anchor=north west][inner sep=0.75pt]    {$1$};
% Text Node
\draw (303,202) node [anchor=north west][inner sep=0.75pt]    {$0$};
% Text Node
\draw (472,199) node [anchor=north west][inner sep=0.75pt]    {$L$};
% Text Node
\draw (148,198) node [anchor=north west][inner sep=0.75pt]    {$-\ell$};
% Text Node
\draw (382,109) node [anchor=north west][inner sep=0.75pt]    {$\Omega _{S}$};
% Text Node
\draw (229,109) node [anchor=north west][inner sep=0.75pt]    {$\Omega _{I}$};
\end{tikzpicture}
\caption{Habitat of vectors and their hosts}
\end{figure}
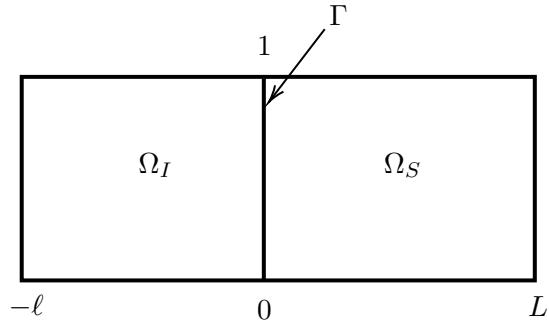

On $\Omega $, juvenile and adult vectors and their hosts diffuse
respectively with constant diffusion coefficients $d_{J}>0$, $d_{A}>0$ and $%
d_{H}>0$. 

More precisely, we denote $d_{J}^{+}$, $d_{A}^{+}$ and $d_{H}^{+}$ the
diffusion coefficients on $\Omega _{S}$ and $d_{J}^{-}$, $d_{A}^{-}$ et $%
d_{H}^{-}$ the diffusion coefficients on $\Omega _{I}$. The same notation
will be adopted for the demographic coefficients. 

Table \ref{tab:my_label} below contains the demographic diffusion and disease transmission coefficients defined on $\Omega _{S}$ and $\Omega _{I}$.

%$\underset{\text{{\Large Table 1: Definitions and properties of demographic,
%diffusion, vertical transmission  and disease process}}}{%
\begin{table}[ht]
    \centering
\begin{tabular}{|c|c|c|}
\hline
\textbf{Parameters} & \textbf{Definitions} & \textbf{Properties} \\ 
\hline 
$\sigma _{J}^{+}$, $\sigma _{J}^{-}$ & Probability of survival of juvenile
vectors & $0\leq \sigma _{J}^{+}$, $\sigma _{J}^{-}\leq 1$ \\ 
\hline
$\sigma _{A}^{+}$, $\sigma _{A}^{-}$ & Probability of survival of adult
vectors & $0\leq \sigma _{A}^{+}$, $\sigma _{A}^{-}\leq 1$ \\ 
\hline
$\sigma _{H}^{+}$, $\sigma _{H}^{-}$ & Probability of survival of hosts & $%
0\leq \sigma _{H}^{+}$, $\sigma _{H}^{-}\leq 1$ \\ 
\hline
$\tau ^{+}$, $\tau ^{-}$ & Transition probability from juvenile to adult
vectors & $0\leq \tau ^{+}$, $\tau ^{-}\leq 1$ \\ 
\hline
$f_{A}^{+}$, $f_{A}^{-}$ & Adult vector female fecundity & $f_{A}^{+}$, $%
f_{A}^{-}\geq 0$ \\ 
\hline
$f_{H}^{+}$, $f_{H}^{-}$ & Host female fecundity & $f_{H}^{+}$, $%
f_{H}^{-}\geq 0$ \\ 
\hline
$\nu $ & Vertical transmission rate per host female & $\nu \geq 0$ \\ 
\hline
$\Lambda _{J}$ & Infection rate of juvenile vectors & $\Lambda _{J}>0$ \\ 
\hline
$\Lambda _{A}$ & Infection rate of adult vectors & $\Lambda _{A}>0$ \\ 
\hline
$\Lambda _{H}$ & Infection rate of hosts & $\Lambda _{H}>0$ \\ 
\hline
$d_{J}^{+}$, $d_{J}^{-}$ & Diffusion coefficient of juvenile vectors & $%
d_{J}^{+}$, $d_{J}^{-}>0$ \\ 
\hline
$d_{A}^{+}$, $d_{A}^{-}$ & Diffusion coefficient of adult vectors & $%
d_{A}^{+}$, $d_{A}^{-}>0$ \\ 
\hline
$d_{H}^{+}$, $d_{H}^{-}$ & Diffusion coefficient of hosts & $d_{H}^{+}$, $%
d_{H}^{-}>0$ \\
\hline
\end{tabular}
%}$
    \caption{Definitions and properties of demographic diffusion vertical transmission and disease process}
    \label{tab:my_label}
\end{table}

\subsection{Skew Brownian motion across the interface $\Gamma$}

Following \cite{cantrell1998}, it is assumed that there
is a flux of juvenile vectors, adult vectors and host that cross the interface $\Gamma $, described by the asymmetric equations:

\begin{equation}\label{asym}
\left\{ 
\begin{array}{lll}
\dis p_{J}\,d_{J}^{-}\,\frac{\partial J_{I}}{\partial x}(t,x,y) & = & \dis%
(1-p_{J})\,d_{J}^{+}\,\frac{\partial J_{S}}{\partial x}(t,x,y) \\ 
\ecart\dis p_{A}\,d_{A}^{-}\,\frac{\partial A_{I}}{\partial x}(t,x,y) & = & %
\dis(1-p_{A})\,d_{A}^{+}\,\frac{\partial A_{S}}{\partial x}(t,x,y) \\ 
\ecart\dis p_{H}\,d_{H}^{-}\,\frac{\partial H_{I}}{\partial x}(t,x,y) & = & %
\dis(1-p_{H})\,d_{H}^{+}\,\frac{\partial H_{S}}{\partial x}(t,x,y),
\end{array}
\right. t>0,~(x,y)\in \Gamma  
\end{equation}
where $p_{J}$, $p_{A}$ and $p_{H} \in (0,1)$, are respectively the probabilities that a juvenile vector, an adult vector and a host, being in $\Omega
_{I}$ crosses the interface $\Gamma $ to be in $\Omega _{S}$.

Demographic, heredity and infection processes generate susceptible
individuals who are in the domain $\Omega _{I}$ of infected and infected
individuals who are in the domain $\Omega _{S}$ of susceptible.\\
During the diffusion process, the susceptible join $\Omega _{S}$ and the infected $\Omega _{I}$. The different flows that cross $\Gamma $ are written from $T_1$ to $T_5$.\\

Details of the movements are summarized in Table \ref{tab:my_label 2} below. We will assume that these are the only moves that take place across the boundary $\Gamma$.

\begin{table}[ht]
    \centering
\begin{tabular}{|c|c|l|}
\hline
\textbf{Flow} & \textbf{Direction} & \hspace{4cm} \textbf{Description} \\ 
\hline
$T_1$ & $\Omega _{I}$ $\rightarrow $ $\Omega _{S}$ & Susceptible juvenile vectors descended from infected adult vectors \\ 
\hline
$T_2$ & $\Omega _{I}$ $\rightarrow $ $\Omega _{S}$ & Susceptible hosts descended from infected hosts (no inheritance) \\ 
\hline
$T_3$ & $\Omega _{S}$ $\rightarrow $ $\Omega _{I}$ & Susceptible hosts that become infected \\ 
\hline
$T_4$ & $\Omega _{S}$ $\rightarrow $ $\Omega _{I}$ & Susceptible adult vectors that become infected \\ 
\hline
$T_5$ & $\Omega _{S}$ $\rightarrow $ $\Omega _{I}$ & Susceptible juvenile vectors that become infected \\
\hline
\end{tabular}

    \caption{Description of the different flows crossing the interface $\Gamma$}\label{tab:my_label 2}
\end{table}

These flows are known and are defined by: 
\begin{equation}
\left\{ 
\begin{array}{lll}

\ecart T_{1} & = & \dis f_{A}^{-}\,\sigma _{A}^{-}\,d_{A}^{-}\,\frac{%
\partial A_{I}}{\partial x}\left\vert _{\Gamma }\right. \\ 

\ecart T_{2} & = & \dis\left( 1-\nu \right) \,f_{H}^{-}\,\sigma
_{H}^{-}\,d_{H}^{-}\,\frac{\partial H_{I}}{\partial x}\left\vert _{\Gamma
}\right. \\ 

\ecart T_{3} & = & \dis\sigma _{H}^{+}\,d_{H}^{+}\,\frac{\partial H_{S}}{%
\partial x}\left\vert _{\Gamma }\right. +\sigma
_{H}^{+}\,f_{H}^{+}\,d_{H}^{+}\,\frac{\partial H_{S}}{\partial x}\left\vert
_{\Gamma }\right. +\left( 1-\nu \right) \,f_{H}^{-}\,\sigma
_{H}^{-}\,d_{H}^{-}\,\frac{\partial H_{I}}{\partial x}\left\vert _{\Gamma
}\right. \\ 

\ecart T_{4} & = & \dis\tau ^{+}\,\sigma _{J}^{+}\,d_{J}^{+}\,\frac{\partial
J_{S}}{\partial x}\left\vert _{\Gamma }\right. +\sigma _{A}^{+}\,d_{A}^{+}\,%
\frac{\partial A_{S}}{\partial x}\left\vert _{\Gamma }\right. \\ 

\ecart T_{5} & = & \dis\left( 1-\tau ^{+}\right) \,\sigma
_{J}^{+}\,d_{J}^{+}\,\frac{\partial J_{S}}{\partial x}\left\vert _{\Gamma
}\right. +f_{A}^{+}\,\sigma _{A}^{+}\,d_{A}^{+}\,\frac{\partial A_{S}}{%
\partial x}\left\vert _{\Gamma }\right. +T_{1}.%

\end{array}
\right.  \label{traces}
\end{equation}

\bigskip

These operators $T_{i}$, $i=1,2,3,4,5$, of traces, acting on the interface $%
\Gamma $, will compose the perturbation of the dispersal operator in the reaction-diffusion system written below.

\section{Reaction-diffusion system and its operational formulation}

The complete reaction-diffusion model considered in this paper is the following
\begin{equation}\label{Syst Reaction-diffusion}
\left\{ \begin{array}{llll}
\dis\frac{\partial J_{I}}{\partial t}&=& \dis d_{J}^{-}\Delta J_{I}+\left(\left( 1-\tau^{-}\right) \,\sigma _{J}^{-} - 1 \right)\,J_{I} + \sigma_A^- \,f_A^-\,A_I+ T_{5}\,\Lambda _{J}, & t>0,~(x,y)\in
\Omega _{I} \\ \ecart

\dis\frac{\partial A_{I}}{\partial t} &=& \dis d_{A}^{-}\Delta A_{I}+\tau
^{-}\,\sigma _{J}^{-}\,J_{I} + \left(\sigma _{A}^{-} - 1\right)\,A_{I}+T_{4}\,\Lambda_{A}, & t>0,~(x,y)\in \Omega _{I} \\ \ecart

\dis\frac{\partial H_{I}}{\partial t} &=& \dis d_{H}^{-}\Delta H_{I}+\left(\left(1 +\nu \,f_{H}^{-}\right)\,\sigma _{H}^{-} - 1\right)H_{I} + T_{3}\,\Lambda_{H}, & t>0,~(x,y)\in \Omega _{I} \\ \ecart

\dis\frac{\partial J_{S}}{\partial t} &=& \dis d_{J}^{+}\Delta J_{S} + \left(
\left( 1-\tau ^{+}\right) \,\sigma _{J}^{+} - 1\right)J_{S}+f_{A}^{+}\,\sigma
_{A}^{+}\,A_{S}+T_{1}, & t>0,~(x,y)\in \Omega _{S} \\ \ecart

\dis\frac{\partial A_{S}}{\partial t} &=& \dis d_{A}^{+}\Delta A_{S} + \tau ^{+}\,\sigma _{J}^{+}\,J_{S} + \left(\sigma _{A}^{+} - 1 \right) A_{S}, & t>0,~(x,y)\in \Omega _{S} \\ \ecart

\dis\frac{\partial H_{S}}{\partial t} &=& \dis d_{H}^{+}\Delta H_{S} + \left(\left(1 + f_{H}^{+}\right)\sigma _{H}^{+} - 1 \right) H_{S} +T_{2}, & t>0,~(x,y)\in \Omega _{S},
\end{array}\right.  
\end{equation}
with the initial conditions
\begin{equation}
\left\{ \hspace{-0.1cm}%
\begin{array}{llll}
J_{I}(0,x,y) & = & J_{I}^{0}(x,y), &  \\ 
A_{I}(0,x,y) & = & A_{I}^{0}(x,y), & (x,y)\in \Omega _{I} \\ 
H_{I}(0,x,y) & = & H_{I}^{0}(x,y), & 
\end{array}%
\right. \quad\left\{ \hspace{-0.1cm}%
\begin{array}{llll}
J_{S}(0,x,y) & = & J_{S}^{0}(x,y), &  \\ 
A_{S}(0,x,y) & = & A_{S}^{0}(x,y), & (x,y)\in \Omega _{S}, \\ 
H_{S}(0,x,y) & = & H_{S}^{0}(x,y), & 
\end{array}%
\right.  \label{CI 2}
\end{equation}
the boundary conditions
\begin{equation}
\left\{ 
\begin{array}{llll}
(J_{I}(t,\sigma _{1},\sigma _{2}),A_{I}(t,\sigma _{1},\sigma
_{2}),H_{I}(t,\sigma _{1},\sigma _{2})) & = & (0,0,0), & t>0,~(\sigma
_{1},\sigma _{2})\in \partial \Omega _{I}\setminus \Gamma \\ 
\ecart(J_{S}(t,\sigma _{1},\sigma _{2}),A_{S}(t,\sigma _{1},\sigma
_{2}),H_{S}(t,\sigma _{1},\sigma _{2})) & = & (0,0,0), & t>0,~(\sigma
_{1},\sigma _{2})\in \partial \Omega _{S}\setminus \Gamma ,%
\end{array}%
\right.  \label{CL 2}
\end{equation}
and the transmission conditions
\begin{equation}\label{CT2}
\left\{ 
\begin{array}{llll}
J_{I}(t,0,y) & = & J_{S}(t,0,y), & t>0,~y\in (0,1) \\ 
\ecart A_{I}(t,0,y) & = & A_{S}(t,0,y), & t>0,~y\in (0,1) \\ 
\ecart H_{I}(t,0,y) & = & H_{S}(t,0,y), & t>0,~y\in (0,1) \\ 
&  &  &  \\ 
\dis p_{J}\,d_{J}^{-}\,\frac{\partial J_{I}}{\partial x}(t,0,y) & = & \dis (1-p_{J})\,d_{J}^{+}\,\frac{\partial J_{S}}{\partial x}(t,0,y), & t>0,~y \in (0,1) \\ 
\ecart\dis p_{A}\,d_{A}^{-}\,\frac{\partial A_{I}}{\partial x}(t,0,y) & = & \dis(1-p_{A})\,d_{A}^{+}\,\frac{\partial A_{S}}{\partial x}(t,0,y), & t>0,~y \in (0,1) \\ 
\ecart\dis p_{H}\,d_{H}^{-}\,\frac{\partial H_{I}}{\partial x}(t,0,y) & = & \dis (1-p_{H})\,d_{H}^{+}\,\frac{\partial H_{S}}{\partial x}(t,0,y), & t>0,~y \in (0,1).
\end{array}
\right.  
\end{equation}

The boundary conditions \eqref{CL 2} reflect the fact that the entire
population evolves inside the domain $\Omega $ and that the outside is a
hostile habitat. The first three equations in \eqref{CT2} express the continuity of the population density of individuals at $\Gamma$, while the last three equations express the skew Brownian motion represented by the probabilities $p_{J}$, $p_{A}$ and $p_{H}$ which can be different from $1/2$. This implies, in particular, that the flows through the interface $\Gamma $ are not continuous.

We set
\begin{equation*}
\mu _{I}=p_{J}\,d_{J}^{-},\quad \alpha _{I}=p_{A}\,d_{A}^{-}\quad \text{and}%
\quad \beta _{I}=p_{H}\,d_{H}^{-},
\end{equation*}%
and 
\begin{equation*}
\mu _{S}=(1-p_{J})\,d_{J}^{+},\quad \alpha _{S}=(1-p_{A})\,d_{A}^{+}\quad 
\text{and}\quad \beta _{S}=(1-p_{H})\,d_{H}^{+}.
\end{equation*}
Note that $\mu_I$, $\alpha_I$, $\beta_I$ and $\mu_S$, $\alpha_S$, $\beta_S$ are strictly positive.

We will write system \eqref{Syst Reaction-diffusion} in the form of a
perturbed abstract Cauchy problem, see \eqref{Pb evolution} below. To this end, we will adopt the following abstract matrix notations
\begin{equation*}
\begin{array}{lll}
V(t)(.):=V(t,.) & = & \dis\left( 
\begin{array}{c}
J(t) \\ 
A(t) \\ 
H(t)%
\end{array}%
\right) (.):=\left( 
\begin{array}{c}
J(t,.) \\ 
A(t,.) \\ 
H(t,.)%
\end{array}%
\right) \\ \ecart
 & := & \dis\left\{ 
\begin{array}{ll}
V_{I}(t,.)=\left( 
\begin{array}{c}
J_{I}(t,.) \\ 
A_{I}(t,.) \\ 
H_{I}(t,.)%
\end{array}%
\right) & \text{on }\Omega _{I} \\ 
&  \\ 
V_{S}(t,.)=\left( 
\begin{array}{c}
J_{S}(t,.) \\ 
A_{S}(t,.) \\ 
H_{S}(t,.)%
\end{array}%
\right) & \text{on }\Omega _{S}.%
\end{array}%
\right.%
\end{array}%
\end{equation*}
Thus, system \eqref{Syst Reaction-diffusion} is written on $\Omega _{I}$ as 
\begin{equation*}
\begin{array}{lll}
V'_{I}(t) = \medskip \\ 
\dis\left( \begin{array}{ccc}
d_{J}^{-}\Delta +[(1-\tau ^{-})\sigma _{J}^{-}-1]I & 0 & 0 \\ 
0 & d_{A}^{-}\Delta + (\sigma _{A}^{-} - 1)I & 0 \\ 
0 & 0 & d_{H}^{-}\Delta + [\sigma _{H}^{-}(1+\nu f_{H}^{-}) - 1]I
\end{array}
\right) \left( 
\begin{array}{c}
J_{I}(t) \\ 
A_{I}(t) \\ 
H_{I}(t)
\end{array}
\right) \medskip \\  
\dis +\left(\begin{array}{c}
\dis \sigma_A^-\,f_A^-\,A_I + \Lambda _{J}\left( \left( 1-\tau ^{+}\right) \,\sigma
_{J}^{+}\,d_{J}^{+}\,\frac{\partial J_{S}(t)}{\partial x}\left\vert _{\Gamma
}\right. +f_{A}^{+}\,\sigma _{A}^{+}\,d_{A}^{+}\,\frac{\partial A_{S}(t)}{
\partial x}\left\vert _{\Gamma }\right. +f_{A}^{-}\,\sigma
_{A}^{-}\,d_{A}^{-}\,\frac{\partial A_{I}(t)}{\partial x}\left\vert _{\Gamma
}\right. \right) \\ 
\ecart\dis\tau ^{-}\,\sigma _{J}^{-}\,J_{I}(t)+\Lambda _{A}\left( \tau
^{+}\,\sigma _{J}^{+}\,d_{J}^{+}\,\frac{\partial J_{S}(t)}{\partial x}%
\left\vert _{\Gamma }\right. +\sigma _{A}^{+}\,d_{A}^{+}\,\frac{\partial
A_{S}(t)}{\partial x}\left\vert _{\Gamma }\right. \right) \\ 
\ecart\dis\Lambda _{H}\left( \sigma _{H}^{+}\,d_{H}^{+}\,\frac{\partial
H_{S}(t)}{\partial x}\left\vert _{\Gamma }\right. +\sigma
_{H}^{+}\,f_{H}^{+}\,d_{H}^{+}\,\frac{\partial H_{S}(t)}{\partial x}
\left\vert _{\Gamma }\right. +\left( 1-\nu \right) \,f_{H}^{-}\,\sigma
_{H}^{-}\,d_{H}^{-}\,\frac{\partial H_{I}(t)}{\partial x}\left\vert _{\Gamma
}\right. \right)
\end{array}\right),
\end{array}
\end{equation*}
and on $\Omega _{S}$ as
\begin{equation*}
\begin{array}{l}
V_{S}'(t) = \\ \ecart
\dis \left( 
\begin{array}{ccc}
d_{J}^{+}\Delta + [(1-\tau ^{+})\sigma _{J}^{+} - 1]I & 0 & 0 \\ 

0 & d_{A}^{+}\Delta + (\sigma _{A}^{+} - 1)I & 0 \\ 

0 & 0 & d_{H}^{+}\Delta + [\sigma _{H}^{+}(1+f_{H}^{+}) - 1]I
\end{array}
\right) \hspace{-0.15cm}\left( 
\begin{array}{c}
J_{S}(t) \\ 
A_{S}(t) \\ 
H_{S}(t)
\end{array}
\right) \\ 
  \\ 
 \dis+\left( 
\begin{array}{c}
\dis f_{A}^{+}\,\sigma_{A}^{+}\,A_{S}(t)+f_{A}^{-}\,\sigma _{A}^{-}\,d_{A}^{-}\,\frac{\partial A_{I}(t)}{\partial x}\left\vert _{\Gamma }\right. \\ \ecart

\dis \tau ^{+}\,\sigma _{J}^{+}\,J_{S}(t) \\ \ecart

\dis (1-\nu) \,f_{H}^{-}\,\sigma_{H}^{-}\,d_{H}^{-}\,\frac{\partial H_{I}(t)}{\partial x}\left\vert _{\Gamma}\right.
\end{array}
\right) .%
\end{array}%
\end{equation*}
Then, equations \eqref{Syst Reaction-diffusion}, \eqref{CI 2}, \eqref{CL 2} and \eqref{CT2} are written in the following form
\begin{equation}
\left\{ 
\begin{array}{lll}
V^{\prime }(t) & = & \L V(t)+\B V(t) \\ 
\ecart V(0) & = & V_{0},
\end{array}
\right.  \label{Pb evolution}
\end{equation}
where operator $\L$ will act only with respect to the spatial variables $x$
and $y$ and is defined by 
\begin{equation*}
\left\{ \hspace{-0.1cm}%
\begin{array}{cll}
D(\L ) & \hspace{-0.1cm}= & \hspace{-0.2cm}\left\{ \hspace{-0.15cm}%
\begin{array}{l}
\varphi =\left( 
\begin{array}{c}
j \\ 
a \\ 
h%
\end{array}%
\right) \in \left( L^{p}(\Omega )\right) ^{3}:\dis\varphi _{|_{\Omega
_{I}}}=\left( 
\begin{array}{c}
j_{I} \\ 
a_{I} \\ 
h_{I}%
\end{array}%
\right) \in \left(W^{2,p}(\Omega _{I})\right) ^{3},~\dis\varphi _{|_{\Omega _{S}}}=\left( 
\begin{array}{c}
j_{S} \\ 
a_{S} \\ 
h_{S}%
\end{array}%
\right) \in \left(W^{2,p}(\Omega _{S})\right) ^{3}, \\ 
\ecart\dis\left( 
\begin{array}{c}
j_{I} \\ 
a_{I} \\ 
h_{I}%
\end{array}%
\right) (-\ell ,y)=\left( 
\begin{array}{c}
j_{I} \\ 
a_{I} \\ 
h_{I}%
\end{array}%
\right) (x,0)=\left( 
\begin{array}{c}
j_{I} \\ 
a_{I} \\ 
h_{I}%
\end{array}%
\right) (x,1)=\left( 
\begin{array}{c}
0 \\ 
0 \\ 
0%
\end{array}%
\right) ,~x\in \lbrack -\ell ,0],~y\in \lbrack 0,1], \\ 
\ecart\dis\left( 
\begin{array}{c}
j_{S} \\ 
a_{S} \\ 
h_{S}%
\end{array}%
\right) (L,y)=\left( 
\begin{array}{c}
j_{S} \\ 
a_{S} \\ 
h_{S}%
\end{array}%
\right) (x,0)=\left( 
\begin{array}{c}
j_{S} \\ 
a_{S} \\ 
h_{S}%
\end{array}%
\right) (x,1)=\left( 
\begin{array}{c}
0 \\ 
0 \\ 
0%
\end{array}%
\right) ,~x\in \lbrack 0,L],~y\in \lbrack 0,1], \\ 
\ecart\dis\left( 
\begin{array}{c}
j_{I} \\ 
\ecart a_{I} \\ 
\ecart h_{I}%
\end{array}%
\right) (0,y)=\left( 
\begin{array}{c}
j_{S} \\ 
\ecart a_{S} \\ 
\ecart h_{S}%
\end{array}%
\right) (0,y)~~\text{and}~~\left( 
\begin{array}{c}
\mu _{I}\dfrac{\partial j_{I}}{\partial x} \\ 
\ecart\alpha _{I}\dfrac{\partial a_{I}}{\partial x} \\ 
\ecart\beta _{I}\dfrac{\partial h_{I}}{\partial x}%
\end{array}%
\right) (0,y)=\left( 
\begin{array}{c}
\mu _{S}\dfrac{\partial j_{S}}{\partial x} \\ 
\ecart\alpha _{S}\dfrac{\partial a_{S}}{\partial x} \\ 
\ecart\beta _{S}\dfrac{\partial h_{S}}{\partial x}%
\end{array}%
\right) (0,y),~y\in \lbrack 0,1]%
\end{array}%
\hspace{-0.1cm}\right\} \\ 
&  &  \\ 
\L \varphi & \hspace{-0.1cm}= & \hspace{-0.2cm}\left\{ 
\begin{array}{ll}
\L _{I}\varphi & \text{on }\Omega _{I} \\ 
\ecart\L _{S}\varphi & \text{on }\Omega _{S},
\end{array}%
\right.%
\end{array}%
\right.
\end{equation*}
with 
\begin{equation*}
\L _{I}:=\left( \begin{array}{ccc}
d_{J}^{-}\Delta + [(1-\tau ^{-})\sigma _{J}^{-} - 1]I & 0 & 0 \\ 
0 & d_{A}^{-}\Delta + (\sigma _{A}^{-} - 1)I & 0 \\ 
0 & 0 & d_{H}^{-}\Delta + [\sigma _{H}^{-}(1+\nu f_{H}^{-}) - 1]I
\end{array}\right),
\end{equation*}
and 
\begin{equation*}
\L _{S}:=\left( \begin{array}{ccc}
d_{J}^{+}\Delta + [(1-\tau ^{+})\sigma _{J}^{+} - 1] I & 0 & 0 \\ 
0 & d_{A}^{+}\Delta + (\sigma _{A}^{+} - 1)I & 0 \\ 
0 & 0 & d_{H}^{+}\Delta + [\sigma _{H}^{+}(1+f_{H}^{+}) - 1]I
\end{array}\right).
\end{equation*}
We have noted all the functions depending on $x$ and $y$ in lower case.
\begin{Rem} 
The domain of $\L$ takes into account, on the one hand, the dispersal process of the \textit{T. Cruzi} parasite in the domains $\Omega_I$, $\Omega_S$ and on the other hand, the boundary and transmission conditions. Its action is defined by those of $\L_I$ and $\L_S$.

\end{Rem}
Likewise, we set \begin{equation*}
\left\{ \hspace{-0.1cm}
\begin{array}{cll}
D(\B) & = & D(\L ) \\ 
\ecart\B\varphi & \hspace{-0.1cm}= & \hspace{-0.2cm}\left\{ 
\begin{array}{ll}
\B_{I}\varphi & \text{on }\Omega _{I} \\ 
\ecart\B_{S}\varphi & \text{on }\Omega _{S},
\end{array}
\right.
\end{array}\right.
\end{equation*}
where, for $x\in [-\ell ,0]$ and $y\in [0,1]$, $\B_{I}$ is defined by
\begin{equation*}
\begin{array}{l}
\left( \B_{I}\varphi \right) (x,y) = \medskip \\
\left( \begin{array}{c}
\dis \sigma_A^-\,f_A^-\,a_I(x,y) + \Lambda _{J}\left( \left( 1-\tau ^{+}\right) \,\sigma_{J}^{+}\,d_{J}^{+}\,\frac{\partial j_{S}}{\partial x}(0,y)+f_{A}^{+}\,\sigma _{A}^{+}\,d_{A}^{+}\,\frac{\partial a_{S}}{\partial x}(0,y)+f_{A}^{-}\,\sigma _{A}^{-}\,d_{A}^{-}\,\frac{\partial a_{I}}{\partial x
}(0,y)\right) \\ \ecart

\dis\tau ^{-}\,\sigma _{J}^{-}\,j_{I}(x,y)+\Lambda _{A}\left( \tau^{+}\,\sigma _{J}^{+}\,d_{J}^{+}\,\frac{\partial j_{S}}{\partial x}(0,y)+\sigma _{A}^{+}\,d_{A}^{+}\,\frac{\partial a_{S}}{\partial x}(0,y)\right) \\ \ecart

\dis\Lambda _{H}\left( \sigma _{H}^{+}\,d_{H}^{+}\,\frac{\partial h_{S}}{\partial x}(0,y)+\sigma _{H}^{+}\,f_{H}^{+}\,d_{H}^{+}\,\frac{\partial
h_{S}}{\partial x}(0,y)+\left( 1-\nu \right) \,f_{H}^{-}\,\sigma_{H}^{-}\,d_{H}^{-}\,\frac{\partial h_{I}}{\partial x}(0,y)\right)
\end{array}\right) ,
\end{array}
\end{equation*}
and for $x\in [0,L]$ and $y\in [0,1]$, $\B_{S}$ is defined by
\begin{equation*}
\left( \B_{S}\varphi \right) (x,y)=\left( 
\begin{array}{c}
\dis f_{A}^{+}\,\sigma_{A}^{+}\,a_{S}(x,y)+f_{A}^{-}\,\sigma _{A}^{-}\,d_{A}^{-}\,\frac{\partial a_{I}}{\partial x}(0,y) \\ \ecart

\dis \tau ^{+}\,\sigma _{J}^{+}\,j_{S}(x,y) \\ \ecart

\dis (1-\nu)\,f_{H}^{-}\,\sigma_{H}^{-}\,d_{H}^{-}\,\frac{\partial h_{I}}{\partial x}(0,y)
\end{array}\right).
\end{equation*}
Moreover, due to the transmission conditions, defined in $\D(\L)$, we
obtain
\begin{equation*}
\begin{array}{l}
\left( \B_{I}\varphi \right) (x,y) = \medskip \\
\left( \begin{array}{c}
\dis \sigma_A^-\,f_A^-\,a_I(x,y) + \Lambda _{J}\left( \frac{\left( 1-\tau ^{+}\right) \,\sigma_{J}^{+}\,d_{J}^{+}\mu _{I}}{\mu _{S}}\,\frac{\partial j_{I}}{\partial x}(0,y)+\left( f_{A}^{-}\,\sigma _{A}^{-}\,d_{A}^{-}+f_{A}^{+}\,\sigma_{A}^{+}d_{A}^{+}\,\frac{\alpha _{I}}{\alpha _{S}}\right) \frac{\partial a_{I}}{\partial x}(0,y)\right) \\ \ecart

\dis\tau ^{-}\,\sigma _{J}^{-}\,j_{I}(x,y)+\Lambda _{A}\left( \frac{\tau ^{+}\,\sigma _{J}^{+}\,d_{J}^{+}\,\mu _{I}}{\mu _{S}}\,\frac{\partial j_{I}}{\partial x}(0,y)+\frac{\sigma _{A}^{+}\,d_{A}^{+}\,\alpha _{I}}{\alpha _{S}}\,\frac{\partial a_{I}}{\partial x}(0,y)\right) \\ \ecart

\dis\Lambda _{H}\,\left( \sigma _{H}^{+}\,d_{H}^{+}\,\frac{\left(1+f_{H}^{+}\right) \beta _{I}}{\beta _{S}}+\left( 1-\nu \right) \,\sigma_{H}^{-}\,f_{H}^{-}\,d_{H}^{-}\right) \frac{\partial h_{I}}{\partial x}(0,y)
\end{array}\right),
\end{array}
\end{equation*}
and 
\begin{equation*}
\left( \B_{S}\varphi \right) (x,y) = \left( 
\begin{array}{c}
\dis f_{A}^{+}\,\sigma _{A}^{+}\,a_{S}(x,y)+\frac{f_{A}^{-}\,\sigma _{A}^{-}\,d_{A}^{-}\,\alpha _{S}}{\alpha _{I}}\,\frac{\partial a_{S}}{\partial x}(0,y) \\ \ecart

\dis \tau ^{+}\,\sigma _{J}^{+}\,j_{S}(x,y) \\ \ecart

\dis (1-\nu) \,\frac{f_{H}^{-}\,\sigma_{H}^{-}\,d_{H}^{-}\,\beta _{S}}{\beta _{I}}\,\frac{\partial h_{S}}{\partial x}(0,y)
\end{array}\right).
\end{equation*}

\begin{Rem}
It is necessary to take $D(\B)=D(\L )$ in order to give a sense to the
traces of all the partial derivatives with respect to $x$ on $\Gamma$. Indeed, for example, for $j_{I}\in W^{2,p}(\Omega _{I})$, we have $%
\dfrac{\partial j_{I}}{\partial x}\in W^{1,p}(\Omega _{I})$ and then $\dfrac{%
\partial j_{I}}{\partial x}\left\vert _{\Gamma }\right. \in W^{1-\frac{1}{p}%
,p}(\Gamma )$ (see \cite{grisvard1969}, Theorem 5.10, p. 335), where%
\begin{equation*}
W^{1-\frac{1}{p},p}(\Gamma ):=\left\{ f\in L^{p}(\Gamma
):(s_{1},s_{2})\longmapsto \frac{f(s_{1})-f(s_{2})}{|s_{1}-s_{2}|}\in
L^{p}(\Gamma \times \Gamma )\right\} ,
\end{equation*}
\thinspace see \cite{triebel1978}, equation (8), p. 323 and \cite{grisvard1969}, p. 333. The same is true for the other partial derivatives
with respect to $x$ which appear in the action of $\B$.
\end{Rem}

In this paper, we will focus ourselves on the study of the spectral properties of operator $\L$ in order to prove the generation of analytic semigroup. To this end, throughout this work we assume that there exists $r_{0}>0$ such that 
\begin{equation}\label{hyp const}
\max \left( \dfrac{\sigma _{H}^{-}(1+\nu f_{H}^{-})-1}{d_{H}^{-}},\dfrac{\sigma _{H}^{+}(1+f_{H}^{+})-1}{d_{H}^{+}}\right)\leqslant r_{0}<\pi ^{2}.
\end{equation}
\begin{Rem}    

{
Condition (\ref{hyp const}) is a general condition for the operator $\L$ to generate an analytic semigroup.\\
Under the more realistic hypothesis that the disease does not affect the demographic parameters of the vectors and hosts (see \cite{mahdjoub2020}), condition (\ref{hyp const}) is written as:
   \begin{equation*}
    \dfrac{\sigma(1+f) - 1}{d} \leq r_0. 
      \end{equation*}
    where $\sigma = \sigma^+ = \sigma^-$, $f = f^+ = f^-$ and $d = d^+ = d^-$.
This inequality is not satisfied for the host population with a high survival probability and important fertility rate in parallel with low mobility. This is the case for a population that exhibits high density. The model is therefore applicable to the case of a host population that exhibits density dependence, which is generally the case in field situations.
}
\end{Rem}

\begin{Rem}\hfill
\begin{enumerate}
\item Note that if $\nu=0$, assumption \eqref{hyp const} becomes 
$$\dfrac{\sigma _{H}^{+}(1+f_{H}^{+})-1}{d_{H}^{+}}\leqslant r_{0}<\pi ^{2}.$$

\item When $\nu >0$, assumption \eqref{hyp const} is obviously satisfied if
$$\sigma^-_H\,\nu \,f_H^- \leqslant 1 - \sigma^-_H \quad\text{and}\quad \sigma^+_H \,f_H^+ \leqslant 1 - \sigma^+_H.$$

\item Note that this assumption \eqref{hyp const} only takes into account the parameters governing the density of hosts.
\end{enumerate}
\end{Rem}

Our main result in this paper is the following.
\begin{Th}\label{Th principal}
Assume that \eqref{hyp const} holds. Then, the main dispersal operator $\L $ is linear, closed, with dense domain and generates an analytic semigroup in $\E=\left[ L^{p}(\Omega )\right] ^{3}$, where $p \in (1,+\infty)$.
\end{Th}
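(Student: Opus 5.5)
The operator $\L$ is diagonal: no equation is coupled to another, either in its action or in its domain. It therefore splits as $\L=\mathrm{diag}(\L_J,\L_A,\L_H)$ on $\E=[L^p(\Omega)]^3$, and it suffices to prove the claim for each scalar transmission operator. Each one has the form
\[
\mathcal{A}u = \begin{cases} d^-\Delta u_I + c^- u_I & \text{on } \Omega_I,\\ d^+\Delta u_S + c^+ u_S & \text{on } \Omega_S,\end{cases}
\]
with homogeneous Dirichlet conditions on $\partial\Omega\setminus\Gamma$ and the transmission conditions $u_I=u_S$ and $\kappa_I\partial_x u_I=\kappa_S\partial_x u_S$ on $\Gamma$, where $\kappa_I,\kappa_S>0$. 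Linearity is obvious. Density holds because $D(\mathcal A)$ contains $C_c^\infty(\Omega_I\cup\Omega_S)$, which is dense in $L^p(\Omega)$. Closedness follows once we know the resolvent set is nonempty. So the real task is a sectorial estimate: there exist $\omega\ge 0$ and $\theta\in(\pi/2,\pi)$ such that for $\lambda$ in the sector $\omega+S_\theta$, the problem $(\mathcal A-\lambda)u=f$ is uniquely solvable in $D(\mathcal A)$ and $\|u\|_{L^p}\le C|\lambda|^{-1}\|f\|_{L^p}$.

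I would solve the resolvent problem by separation of variables in $y$, following the operational approach typical of this group. The operator $B=\partial_y^2$ on $L^p(0,1)$ with Dirichlet conditions is a sectorial operator with spectrum $\{-k^2\pi^2\}_{k\ge1}$, so $-B$ has a bounded $H^\infty$-calculus (BIP) and $-B\ge \pi^2$. Dividing by $d^\pm$, the problem becomes an abstract second-order ODE on each side,
\[
u_I''(x)+\big(B-\lambda_I\big)u_I(x)=f_I(x)/d^-,\qquad \lambda_I=(\lambda-c^-)/d^-,
\]
and similarly on $\Omega_S$. Setting $M_\pm=-(-B+\lambda_\pm)^{1/2}$, which generates analytic semigroups, I would write
\[
u_I(x)=e^{(x+\ell)M_-}\xi_1+e^{-xM_-}\xi_2+\text{particular term},
\]
and analogously for $u_S$. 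The Dirichlet conditions at $x=-\ell$ and $x=L$, together with the two transmission conditions at $x=0$, give a $2\times2$ operator system for the unknown boundary values. The key quantity is the determinant-type operator $\kappa_I M_- (\cdots)+\kappa_S M_+(\cdots)$. Since all the operators commute (they are functions of $B$), its invertibility reduces to a scalar check on the joint spectrum, and $\kappa_I,\kappa_S>0$ make the two terms of the same sign. Inverting it through the functional calculus then gives an explicit representation of $u$.

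The main obstacle is the estimate $\|u\|\le C\|f\|/|\lambda|$, together with showing $u\in W^{2,p}$ on each side, uniformly for $\lambda$ in a sector. For this I would use Dore--Venni, or the BIP of $-B+\lambda$, to get maximal regularity of the particular solutions, and estimates like $\|M_\pm^{-1}\|\le C|\lambda|^{-1/2}$ and $\|e^{xM}\|\le C$. Hypothesis \eqref{hyp const} enters exactly here. For the $H$-component with $\lambda$ near $0$, we need $-B+\lambda_\pm$ to stay sectorial with spectrum bounded away from $(-\infty,0]$. Since $\lambda_\pm=\lambda/d^\pm-c^\pm/d^\pm$ and $c^\pm/d^\pm\le r_0<\pi^2\le -B$, this gives a uniform sector that contains a neighbourhood of $0$, so we may take $\omega=0$. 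The $J$ and $A$ components have $c^\pm\le0$ and are therefore automatic. If this route proved too technical, a fallback is a variational argument in $L^2$ with the weighted inner product $\kappa_I\int_{\Omega_I}+\kappa_S\int_{\Omega_S}$ (rescaled by $d^\pm$), which makes $\mathcal A$ self-adjoint and negative by the same Poincaré bound $\pi^2$. The $L^p$ case would then follow via Gaussian bounds or interpolation.
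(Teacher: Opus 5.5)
Your main route is the paper's. It has the same diagonal splitting and the same reduction to an abstract second-order equation in $X=L^p(0,1)$ with the Dirichlet operator $Q$. It also uses the same square roots $P_\lambda^\pm$ (your $M_\pm$), semigroup representation of $h_I,h_S$, $2\times 2$ interface system, inversion of its determinant by the $H^\infty$-calculus, and Dore--Venni for the $W^{2,p}$ regularity. Finally, \eqref{hyp const} is used only to work at $\omega=0$. As your shift shows, for generation alone the zeroth-order terms are a bounded perturbation and \eqref{hyp const} could be dropped.

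The step you do not adequately justify is the one the paper spends most effort on: the invertibility of the determinant. ``The two terms have the same sign'' makes sense only for real $\lambda$. Analyticity requires $\lambda$ in a sector of half-angle strictly larger than $\pi/2$, where $\sqrt{z-\lambda_\pm}$ and the $\tanh$-type factors are genuinely complex. Write $\Pi_\lambda=f_\lambda(-Q)$ with $f_\lambda=1+\tilde f_\lambda$ as in \eqref{f_lambda (z)}. What must be proved is $|\arg\tilde f_\lambda(z)|\leq\pi-\delta$ with $\delta>0$ independent of $\lambda$. The paper does this by the angle bookkeeping of \refL{Lem estim Zpm} and \refP{Prop inv Det}, based on $|\arg(1-e^{-w})-\arg(1+e^{-w})|<\alpha$ for $w\in S_\alpha$ (\refP{Prop DoreLabbas 2}). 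This bookkeeping is exactly what restricts $\lambda$ to $S_{4(\pi-\varepsilon_0)/7}$.

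Moreover, a scalar check on $\sigma(-Q)=\{k^2\pi^2\}$ gives invertibility only for each fixed $\lambda$. The bound $C/|\lambda|$ needs $\sup_\lambda\|D_\lambda^{-1}\|_{\L(X)}<\infty$. In $L^p(0,1)$ with $p\neq 2$, bounded multipliers on the sine expansion need not define bounded operators, so values at the eigenvalues do not control operator norms. You need $|f_\lambda(z)|\geq c>0$ uniformly for $z$ in a sector containing $\sigma(-Q)$ and $\lambda$ in the admissible sector, combined with the bounded $H^\infty$-calculus of $-Q$. This is why the paper couples the sectors of $z$ and $\lambda$.

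Your $L^2$ fallback is correct as far as it goes. The weights $\kappa_I/d^-$ and $\kappa_S/d^+$ make the operator self-adjoint, and the Poincar\'e constant $\pi^2$ in $y$ makes it negative under \eqref{hyp const}. However, Gaussian bounds or Stein interpolation only give an analytic semigroup on $L^p$ whose generator has an abstractly defined domain. Identifying that generator with $\L$ means showing that $(\lambda-\L)u=f$ is solvable with $u$ piecewise $W^{2,p}$ satisfying the transmission conditions. That is precisely the $L^p$ regularity the operational construction delivers, so the fallback does not bypass the main work.
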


\section{The spectral equation}

The spectral study of $\L $ is based on the study of the following equation: 
\begin{equation}
\L \varphi -\lambda \varphi =\psi \in \E:=(L^{p}(\Omega ))^{3},
\label{eq L phi - lambda phi = psi}
\end{equation}%
where $\varphi \in D(\L )$ and $\lambda $ is a complex number belonging to a sector which will be specified later.

The space $\E$ is normed by 
\begin{equation*}
\left\Vert \left( 
\begin{array}{c}
j \\ 
a \\ 
h
\end{array}
\right) \right\Vert _{\E}=\max \left( \Vert j\Vert _{L^{p}(\Omega )},\Vert
a\Vert _{L^{p}(\Omega )},\Vert h\Vert _{L^{p}(\Omega )}\right) ,
\end{equation*}%
which is equivalent to 
\begin{equation*}
\max \left( \Vert j_{I}\Vert _{L^{p}(\Omega _{I})}+\Vert j_{S}\Vert
_{L^{p}(\Omega _{S})},\Vert a_{I}\Vert _{L^{p}(\Omega _{I})}+\Vert
a_{S}\Vert _{L^{p}(\Omega _{S})},\Vert h_{I}\Vert _{L^{p}(\Omega
_{I})}+\Vert h_{S}\Vert _{L^{p}(\Omega _{S})}\right) .
\end{equation*}%
Equation \eqref{eq L phi - lambda phi = psi} writes 
\begin{equation*}
\L \left( 
\begin{array}{c}
j \\ 
a \\ 
h
\end{array}
\right) -\lambda \left( 
\begin{array}{c}
j \\ 
a \\ 
h
\end{array}
\right) =\left( \begin{array}{c}
k \\ 
m \\ 
n
\end{array}\right) ,
\end{equation*}
which gives the explicit following system
\begin{equation*}
\left\{ 
\begin{array}{ll}
\begin{array}{lll}
d_{J}^{-}\,\Delta j_{I}-\left( \lambda +1 -(1-\tau ^{-})\sigma _{J}^{-}\right)
j_{I} & = & k_{I} \\ 
\ecart d_{A}^{-}\,\Delta a_{I}-\left( \lambda + 1 -\sigma _{A}^{-}\right) a_{I} & = & m_{I} \\ \ecart 
d_{H}^{-}\,\Delta h_{I}-\left( \lambda + 1 - \sigma _{H}^{-}(1+\nu f_{H}^{-})\right) h_{I} & = & n_{I},
\end{array}
& \text{on }\Omega _{I} \\ 
&  \\ 
\begin{array}{lll}
d_{J}^{+}\,\Delta j_{S}-\left( \lambda + 1 - (1-\tau ^{+})\sigma
_{J}^{+}\right) j_{S} & = & k_{S} \\ 
\ecart d_{A}^{+}\,\Delta a_{S}-\left( \lambda + 1 - \sigma _{A}^{+}\right) a_{S} & = & m_{S} \\ \ecart 
d_{H}^{+}\,\Delta h_{S}-\left( \lambda + 1 - \sigma_{H}^{+}(1+f_{H}^{+})\right) h_{S} & = & n_{s},
\end{array}
& \text{on }\Omega _{S}
\end{array}
\right.
\end{equation*}
with 
\begin{equation*}
\left\{ 
\begin{array}{ll}
\begin{array}{lllll}
j_{I}(x,0) & = & j_{I}(x,1) & = & 0 \\ 
a_{I}(x,0) & = & a_{I}(x,1) & = & 0 \\ 
h_{I}(x,0) & = & h_{I}(x,1) & = & 0%
\end{array}
& x\in \lbrack -\ell ,0], \\ 
&  \\ 
\begin{array}{lllll}
j_{S}(x,0) & = & j_{S}(x,1) & = & 0 \\ 
a_{S}(x,0) & = & a_{S}(x,1) & = & 0 \\ 
h_{S}(x,0) & = & h_{S}(x,1) & = & 0%
\end{array}
& x\in \lbrack 0,L], \\ 
&  \\ 
\begin{array}{lllllll}
j_{I}(-\ell ,y) & = & a_{I}(-\ell ,y) & = & h_{I}(-\ell ,y) & = & 0 \\ 
j_{S}(L,y) & = & a_{S}(L,y) & = & h_{S}(L,y) & = & 0,
\end{array}
& y\in \lbrack 0,1],%
\end{array}%
\right.
\end{equation*}%
and 
\begin{equation*}
\left\{ 
\begin{array}{ll}
\begin{array}{lll}
j_{I}(0,y) & = & j_{S}(0,y) \\ 
\ecart a_{I}(0,y) & = & a_{S}(0,y) \\ 
\ecart h_{I}(0,y) & = & h_{S}(0,y)%
\end{array}
& y\in \lbrack 0,1] \\ 
&  \\ 
\begin{array}{lll}
\mu _{I}\dfrac{\partial j_{I}}{\partial x}(0,y) & = & \mu _{S}\dfrac{%
\partial j_{S}}{\partial x}(0,y) \\ 
\ecart\alpha _{I}\dfrac{\partial a_{I}}{\partial x}(0,y) & = & \alpha _{S}%
\dfrac{\partial a_{S}}{\partial x}(0,y) \\ 
\ecart\beta _{I}\dfrac{\partial h_{I}}{\partial x}(0,y) & = & \beta _{S}%
\dfrac{\partial h_{S}}{\partial x}(0,y).%
\end{array}
& y\in \lbrack 0,1]%
\end{array}%
\right.
\end{equation*}%
We then obtain the following three subsystems 
\begin{equation}\label{Pb trans en J}
\left\{ 
\begin{array}{l}
\begin{array}{llll}
\Delta j_{I}-\left( \dfrac{\lambda }{d_{J}^{-}} + \dfrac{1-(1-\tau ^{-})\,\sigma_{J}^{-}}{d_{J}^{-}}\right) j_{I} & = & \dfrac{k_{I}}{d_{J}^{-}} & \text{on }
\Omega _{I} \\  \ecart

\Delta j_{S}-\left( \dfrac{\lambda }{d_{J}^{+}}+\dfrac{1-(1-\tau
^{+})\,\sigma _{J}^{+}}{d_{J}^{+}}\right) j_{S} & = & \dfrac{k_{S}}{d_{J}^{+}} & \text{on }\Omega _{S}
\end{array} \\ \ecart
\begin{array}{llllll}
j_{I}(x,0) & = & j_{I}(x,1) & = & 0 & x\in \lbrack -\ell ,0] \\ 
j_{S}(x,0) & = & j_{S}(x,1) & = & 0 & x\in \lbrack 0,L] \\ 
j_{I}(-\ell ,y) & = & j_{S}(L,y) & = & 0 & y\in \lbrack 0,1] \\ 
j_{I}(0,y) & = & j_{S}(0,y) &  &  & y\in \lbrack 0,1] \\ 
\mu _{I}\dfrac{\partial j_{I}}{\partial x}(0,y) & = & \mu _{S}\dfrac{
\partial j_{S}}{\partial x}(0,y) &  &  & y\in \lbrack 0,1],
\end{array}
\end{array}
\right.  
\end{equation}

\begin{equation}\label{Pb trans en A}
\left\{ 
\begin{array}{l}
\begin{array}{llll}
\Delta a_{I}-\left( \dfrac{\lambda }{d_{A}^{-}}+\dfrac{1-\sigma _{A}^{-}}{
d_{A}^{-}}\right) a_{I} & = & \dfrac{m_{I}}{d_{A}^{-}} & \text{on }\Omega
_{I} \\ 
\ecart\Delta a_{S}-\left( \dfrac{\lambda }{d_{A}^{+}}+\dfrac{1-\sigma
_{A}^{+}}{d_{A}^{+}}\right) a_{S} & = & \dfrac{m_{S}}{
d_{A}^{+}} & \text{on }\Omega _{S}
\end{array}
\\ \ecart
\begin{array}{llllll}
a_{I}(x,0) & = & a_{I}(x,1) & = & 0 & x\in \lbrack -\ell ,0] \\ 
a_{S}(x,0) & = & a_{S}(x,1) & = & 0 & x\in \lbrack 0,L] \\ 
a_{I}(-\ell ,y) & = & a_{S}(L,y) & = & 0 & y\in \lbrack 0,1] \\ 
a_{I}(0,y) & = & a_{S}(0,y) &  &  & y\in \lbrack 0,1] \\ 
\alpha _{I}\dfrac{\partial a_{I}}{\partial x}(0,y) & = & \alpha _{S}\dfrac{%
\partial a_{S}}{\partial x}(0,y) &  &  & y\in \lbrack 0,1].%
\end{array}
\end{array}
\right.  
\end{equation}
and
\begin{equation}\label{Pb trans en H}
\left\{ 
\begin{array}{l}
\begin{array}{llll}
\Delta h_{I}-\left( \dfrac{\lambda }{d_{H}^{-}}+\dfrac{1-\sigma
_{H}^{-}\,(1+\nu f_{H}^{-})}{d_{H}^{-}}\right) h_{I} & = & \dfrac{n_{I}}{
d_{H}^{-}} & \text{on }\Omega _{I} \\ \ecart
\Delta h_{S}-\left( \dfrac{\lambda }{d_{H}^{+}}+\dfrac{1-\sigma
_{H}^{+}\,(1+f_{H}^{+})}{d_{H}^{+}}\right) h_{S} & = & 
\dfrac{n_{S}}{d_{H}^{+}} & \text{on }\Omega _{S}
\end{array} \\ \ecart
\begin{array}{llllll}
h_{I}(x,0) & = & h_{I}(x,1) & = & 0 & x\in \lbrack -\ell ,0] \\ 
h_{S}(x,0) & = & h_{S}(x,1) & = & 0 & x\in \lbrack 0,L] \\ 
h_{I}(-\ell ,y) & = & h_{S}(L,y) & = & 0 & y\in \lbrack 0,1] \\ 
h_{I}(0,y) & = & h_{S}(0,y) &  &  & y\in \lbrack 0,1] \\ 
\beta _{I}\dfrac{\partial h_{I}}{\partial x}(0,y) & = & \beta _{S}\dfrac{%
\partial h_{S}}{\partial x}(0,y) &  &  & y\in \lbrack 0,1].%
\end{array}
\end{array}
\right.  
\end{equation}
The study of the three systems above is carried out in an analogous manner, with the difference that the study of the last requires hypothesis \eqref{hyp const} since, for this system, the following coefficients $1-\sigma_{H}^{-}\,(1+\nu f_{H}^{-})$ and $1-\sigma_{H}^{+}\,(1+f_{H}^{+})$ are not necessarily positive. This is why we focus ourselves on the system governed by the hosts, that is \eqref{Pb trans en H}.

We will need the prerequisites mentioned in the section below.

\section{Prerequisites}

\subsection{Some properties on complex numbers}

In the sequel, we will need the following definition.

\begin{Def}
\label{defsector} Let $\omega \in \lbrack 0,\pi ]$, we denote 
\begin{equation*}
S_{\omega }=\left\{ 
\begin{array}{lll}
\left\{ z\in \CC:z\neq 0\text{ and }|\arg(z)|<\omega \right\} & \text{if} & 
\omega \in (0,\pi], \\ 
\ecart\,\RR_{+}\setminus \{0\} & \text{if} & \omega =0.
\end{array}
\right.
\end{equation*}
\end{Def}

\begin{Prop}
\label{Prop DoreLabbas} Let $z_{1},z_{2}\in \mathbb{C}\setminus \{0\}$. Then 
\begin{equation*}
|z_{1}+z_{2}|\geqslant \left( |z_{1}|+|z_{2}|\right) \left\vert \cos \left( 
\frac{\arg (z_{1})-\arg (z_{2})}{2}\right) \right\vert .
\end{equation*}
\end{Prop}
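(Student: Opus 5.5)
The plan is a direct computation in polar form: factor out the ``mean'' phase of $z_1$ and $z_2$, so that the real part of what remains is exactly $(|z_1|+|z_2|)\cos$ of the half-angle difference.

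Write $z_1=r_1e^{i\theta_1}$ and $z_2=r_2e^{i\theta_2}$ with $r_1,r_2>0$, where $\theta_1=\arg(z_1)$ and $\theta_2=\arg(z_2)$. Set
\begin{equation*}
\delta=\frac{\theta_1-\theta_2}{2}\quad\text{and}\quad \eta=\frac{\theta_1+\theta_2}{2},
\end{equation*}
so that $\theta_1=\eta+\delta$ and $\theta_2=\eta-\delta$. Then
\begin{equation*}
z_1+z_2=e^{i\eta}\left(r_1e^{i\delta}+r_2e^{-i\delta}\right)=e^{i\eta}\Big((r_1+r_2)\cos\delta+i\,(r_1-r_2)\sin\delta\Big).
\end{equation*}

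Since $|e^{i\eta}|=1$, the modulus of $z_1+z_2$ equals that of the complex number in parentheses. A complex number has modulus at least the absolute value of its real part, so
\begin{equation*}
|z_1+z_2|=\sqrt{(r_1+r_2)^2\cos^2\delta+(r_1-r_2)^2\sin^2\delta}\geqslant (r_1+r_2)|\cos\delta|,
\end{equation*}
which is exactly the claimed inequality.

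The only point needing care is the choice of branch for $\arg$. Changing $\theta_1$ or $\theta_2$ by a multiple of $2\pi$ changes $\delta$ by a multiple of $\pi$, and this leaves $|\cos\delta|$ unchanged. Hence the inequality holds whatever determination of the arguments is used. I expect no real obstacle; the whole argument is this factorization.
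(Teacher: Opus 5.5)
Your proof is correct and complete. The factorization $z_1+z_2=e^{i\eta}\bigl((r_1+r_2)\cos\delta+i(r_1-r_2)\sin\delta\bigr)$ gives the exact identity $|z_1+z_2|^2=(r_1+r_2)^2\cos^2\delta+(r_1-r_2)^2\sin^2\delta$, and the bound follows at once.

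The paper gives no argument here; it only refers to Proposition 4.9 of \cite{dore-labbas2011}. Your computation is the standard elementary proof, so it makes the statement self-contained. An equivalent version expands $|z_1+z_2|^2=r_1^2+r_2^2+2r_1r_2\cos(\theta_1-\theta_2)$ and uses $\cos(\theta_1-\theta_2)=2\cos^2\delta-1$.

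Keep your remark on the branch of $\arg$, though it can be sharpened. The factorization holds verbatim for any real $\theta_1,\theta_2$ with $z_j=r_je^{i\theta_j}$, so no branch argument is even needed. This matters later, in the proof of Proposition~\ref{Prop inv Det}. There the proposition is applied with an ``argument'' of $\tilde f_\lambda(z)$ built as a sum of arguments of factors, which need not be a principal value.
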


This result is proved in \cite{dore-labbas2011}, Proposition 4.9, p. 1879.

\begin{Prop}
\label{Prop DoreLabbas 2} Let $0<\alpha <\pi /2$ and $z\in S_{\alpha }$. We
have

\begin{enumerate}
\item $\left|\arg\left(1 - e^{-z}\right) - \arg\left(1 +
e^{-z}\right)\right| < \alpha$,

\item $\left|1 + e^{-z} \right| \geqslant 1 - e^{-\pi/(2 \tan(\alpha))}$,

\item $\dfrac{|z| \cos(\alpha)}{1 + |z|\cos(\alpha)} \leqslant |1 - e^{-z}|
\leqslant \dfrac{2|z|}{1 + |z| \cos(\alpha)}$.
\end{enumerate}
\end{Prop}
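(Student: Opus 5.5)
Throughout, write $z=x+iy$ with $x=\mathrm{Re}\,z>0$. The hypothesis $z\in S_{\alpha}$, $0<\alpha<\pi/2$, means $|y|<x\tan\alpha$, and it gives $x\geqslant |z|\cos\alpha$. Also $|e^{-z}|=e^{-x}<1$, so $1\pm e^{-z}$ both have strictly positive real part. Their arguments therefore lie in $(-\pi/2,\pi/2)$, and their difference is exactly the principal argument of the quotient. I will treat the three items separately, each by an elementary real-variable estimate.

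\textbf{Item 1.} Observe that $\dfrac{1-e^{-z}}{1+e^{-z}}=\tanh(z/2)$. With $w=z/2=x'+iy'$ we have the standard formula
\[
\tanh w=\frac{\sinh(2x')+i\sin(2y')}{\cosh(2x')+\cos(2y')},
\]
whose denominator is positive since $x'>0$. Hence $\arg\tanh w=\arctan\bigl(\sin(2y')/\sinh(2x')\bigr)$. Using $|\sin(2y')|\leqslant 2|y'|<2x'\tan\alpha\leqslant \sinh(2x')\tan\alpha$, we get $|\tan\arg\tanh w|<\tan\alpha$. This gives $|\arg(1-e^{-z})-\arg(1+e^{-z})|<\alpha$.

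\textbf{Item 2.} Split according to the size of $x$.
\begin{itemize}
\item If $x\geqslant \pi/(2\tan\alpha)$, the triangle inequality gives $|1+e^{-z}|\geqslant 1-e^{-x}\geqslant 1-e^{-\pi/(2\tan\alpha)}$.
\item If $x<\pi/(2\tan\alpha)$, then $|y|<x\tan\alpha<\pi/2$, so $\cos y>0$. Then $|1+e^{-z}|^2=1+2e^{-x}\cos y+e^{-2x}>1$, which is stronger than the claim.
\end{itemize}

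\textbf{Item 3.} Set $t=|z|\cos\alpha\leqslant x$.

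For the lower bound, $|1-e^{-z}|\geqslant 1-e^{-x}\geqslant \dfrac{x}{1+x}$, which is equivalent to $e^{x}\geqslant 1+x$. Since $s\mapsto s/(1+s)$ is increasing and $x\geqslant t$, this is at least $\dfrac{t}{1+t}$.

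For the upper bound, I will not use $\min(2,|z|)$: the intermediate estimate $|1-e^{-z}|\leqslant 1+e^{-x}$ is too weak near $t\approx 1$, and this is the only step needing care. Instead, write
\[
1-e^{-z}=z\int_0^1 e^{-sz}\,ds,
\qquad\text{so}\qquad
|1-e^{-z}|\leqslant |z|\int_0^1e^{-sx}\,ds=|z|\,\frac{1-e^{-x}}{x}.
\]
It remains to check the elementary inequality $(1+x)(1-e^{-x})\leqslant 2x$ for $x>0$. Let $g(x)=2x-(1+x)(1-e^{-x})$. Then $g(0)=0$ and $g'(x)=1-xe^{-x}$, which is positive because $xe^{-x}\leqslant e^{-1}<1$. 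This yields $|1-e^{-z}|\leqslant \dfrac{2|z|}{1+x}\leqslant \dfrac{2|z|}{1+|z|\cos\alpha}$, completing item 3.
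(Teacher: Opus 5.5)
Your proof is correct. The paper gives no argument for this proposition; it imports it from Dore--Labbas (Proposition 4.10 there). So your write-up is a self-contained replacement for a citation rather than a variant of an in-text proof.

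Each step checks out:
\begin{itemize}
\item Item 1: since $\mathrm{Re}(1\pm e^{-z})\geqslant 1-e^{-x}>0$, the difference of principal arguments equals the principal argument of $\tanh(z/2)$. The chain $|\sin(2y')|\leqslant 2|y'|<2x'\tan\alpha\leqslant\sinh(2x')\tan\alpha$ then gives the strict inequality.
\item Item 2: the case split at $x=\pi/(2\tan\alpha)$ is sound, because in the second case $|y|<x\tan\alpha<\pi/2$ forces $\cos y>0$.
\item Item 3, upper bound: the representation $1-e^{-z}=z\int_0^1 e^{-sz}\,ds$ reduces it to $(1+x)(1-e^{-x})\leqslant 2x$, which follows from $g'(x)=1-xe^{-x}>0$.
\item Item 3, lower bound: it follows from $e^{x}\geqslant 1+x$ and the monotonicity of $s\mapsto s/(1+s)$.
\end{itemize}

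Compared with the bare citation, which buys brevity, your route buys transparency and slightly sharper intermediate statements. Item 3 holds with $\mathrm{Re}\,z$ in place of $|z|\cos\alpha$, and $|1+e^{-z}|>1$ whenever $\mathrm{Re}\,z<\pi/(2\tan\alpha)$. Moreover, for $\alpha\leqslant\pi/3$ one has $2\cos\alpha\geqslant 1$, so your bounds give at once the constants $1$ and $4$ used in item 4 of Lemma~\ref{Lem estim Zpm}.
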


This result is proved in \cite{dore-labbas2011}, Proposition 4.10, p. 1880.

\subsection{Some properties on sectorial operators}

This subsection uses the definitions and results of \cite{haase2006}.

\begin{Def}
\label{Def op sect} Let $\omega \in (0,\pi)$. A linear
operator $A$ on a complex Banach space $E$ is said sectorial with angle $\omega$ if

\begin{enumerate}
\item $\sigma(A) \subset \overline{S_\omega}$,

\item $\dis M(A,\omega ^{\prime }):=\sup_{\lambda \in \CC\setminus \overline{%
S_{\omega ^{\prime }}}}\left\Vert \lambda (A-\lambda I)^{-1}\right\Vert _{\L %
(E)}<+\infty $ for every $\omega ^{\prime }\in (\omega ,\pi).$
\end{enumerate}

Here, $\sigma (A)$ denotes the spectrum of $A$. When $A$ verifies the two
points above, we write $A \in Sect(\omega )$.
\end{Def}

\begin{Prop}
\label{Prop sect} \hfill

\begin{enumerate}
\item If $(-\infty ,0) \subset \rho (A)$ and 
\begin{equation*}
M(A):=M(A,\pi ):=\underset{t>0}{\sup }\left\Vert t(A+tI)^{-1}\right\Vert
<+\infty ,
\end{equation*}%
then $M(A)\geqslant 1$ and 
\begin{equation*}
A\in Sect\left( \pi -\arcsin \left( \dfrac{1}{M(A)}\right) \right) .
\end{equation*}

\item If $A\in Sect(\omega _{A})$ and $\nu \in (0,1/2]$, then $A^{\nu }$
is well defined and $A^{\nu }\in Sect(\nu \omega _{A})$. We deduce that $-A^{\nu }$ generates an analytic semigroup.
\end{enumerate}
\end{Prop}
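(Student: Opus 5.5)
The statement to be proved is Proposition~\ref{Prop sect}, and I treat its two items separately. For item 1 the plan is a Neumann series argument around the points $-t$, $t>0$.

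Write $R(\mu)=(A-\mu I)^{-1}$. For $t>0$ and $\lambda\in\CC$ we have
$$A-\lambda I=(A+tI)\bigl(I-(\lambda+t)R(-t)\bigr).$$
Since $\|R(-t)\|\leqslant M(A)/t$, the operator $A-\lambda I$ is invertible as soon as $|\lambda+t|<t/M(A)$. In that case
$$R(\lambda)=\sum_{n\geqslant 0}(\lambda+t)^nR(-t)^{n+1},\qquad \|R(\lambda)\|\leqslant \frac{M(A)/t}{1-|\lambda+t|M(A)/t}.$$
So the open disc $D_t$ with centre $-t$ and radius $t/M(A)$ lies in $\rho(A)$.

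First I prove $M(A)\geqslant 1$ by contradiction, assuming $E\neq\{0\}$. If $M(A)<1$, then the discs $D_t$ have radius larger than their distance to $0$, so their union is all of $\CC$ and $\sigma(A)=\emptyset$. For $\lambda\neq 0$, take $t=c|\lambda|$ with $c$ large enough that $|\lambda+t|\leqslant (1+c)|\lambda|\leqslant q\,t/M(A)$ for a fixed $q<1$; this is possible precisely because $M(A)<1$. The Neumann bound then gives $\|R(\lambda)\|\leqslant C/|\lambda|$. Thus $R$ is an entire $\mathcal{L}(E)$-valued function tending to $0$ at infinity, so $R\equiv 0$ by Liouville's theorem, which is absurd.

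Next comes the sectoriality. The union $\bigcup_{t>0}D_t$ is exactly the open sector $\{\lambda\neq 0:\ \pi-|\arg\lambda|<\arcsin(1/M(A))\}$, since $D_t$ is seen from the origin under half-angle $\arcsin(1/M(A))$. Its complement is $\overline{S_\omega}$ with $\omega=\pi-\arcsin(1/M(A))$, so $\sigma(A)\subset\overline{S_\omega}$. Now fix $\omega'\in(\omega,\pi)$ and take $\lambda=re^{i\theta}$ with $|\theta|>\omega'$, so that $\lambda\notin\overline{S_{\omega'}}$. I choose $t=r|\cos\theta|$, the foot of the perpendicular from $\lambda$ to the negative real axis. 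Then
$$\frac{|\lambda+t|}{t}=|\tan\theta|\leqslant \tan(\pi-\omega')<\frac{1}{\sqrt{M(A)^2-1}}.$$
I expect the main care to be needed here: this choice of $t$ should give $|\lambda+t|\,M(A)/t\leqslant q(\omega')<1$. If it does not, I will take the disc centre $t$ that optimises the ratio, namely $t=r/|\cos\theta|$. Once $q(\omega')<1$ is secured, the Neumann bound together with $|\lambda|\leqslant t/|\cos\theta|$ gives $\|\lambda R(\lambda)\|\leqslant M(A)/\bigl(|\cos\omega'|(1-q)\bigr)$. Hence $A\in Sect(\omega)$.

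For item 2, I rely on the holomorphic functional calculus for sectorial operators from \cite{haase2006}. For $\nu\in(0,1/2]$ I define $A^{\nu}$ through the Balakrishnan formula
$$A^{\nu}x=\frac{\sin(\nu\pi)}{\pi}\int_0^{\infty}t^{\nu-1}A(A+t)^{-1}x\,dt$$
on $D(A)$, and then take the closure. The spectral inclusion $\sigma(A^{\nu})\subset\overline{S_{\nu\omega_A}}$ and the resolvent bounds outside $S_{\omega'}$ for $\omega'>\nu\omega_A$ follow from the composition rule $(z^\nu$ maps $S_{\omega_A}$ onto $S_{\nu\omega_A})$ and the uniform estimate of the functional calculus. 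Finally, since $\nu\omega_A\leqslant\omega_A/2<\pi/2$, the operator $A^{\nu}$ is sectorial with angle less than $\pi/2$. By the standard generation theorem in \cite{haase2006}, $-A^{\nu}$ therefore generates a bounded analytic semigroup.
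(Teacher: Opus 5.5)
The paper does not prove this proposition. It quotes it from \cite{haase2006}, and your plan is the standard argument behind that citation: a Neumann series about $-t$ for item~1 and the holomorphic functional calculus for item~2. The proof is essentially correct. However, the step you left open in item~1 only works with your second choice of $t$, and item~2 is a pointer to the literature rather than a proof.

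\textbf{Item~1.} In the resolvent estimate, the foot of the perpendicular $t=r|\cos\theta|$ does \emph{not} work.
It gives $M(A)|\lambda+t|/t=M(A)|\tan\theta|$, and $|\tan\theta|<1/\sqrt{M(A)^2-1}$ does not imply $M(A)|\tan\theta|<1$. For example, with $M(A)=2$ (so $\omega=5\pi/6$) and $|\theta|$ close to $5\pi/6$, the quantity is about $2/\sqrt{3}>1$. This choice only yields the worse angle $\pi-\arctan(1/M(A))$.

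Your fallback $t=r/|\cos\theta|$ works, and the check is one line:
\begin{itemize}
\item $\lambda+t=r\sin^2\theta/|\cos\theta|+ir\sin\theta$, hence $|\lambda+t|/t=|\sin\theta|\leqslant\sin(\pi-\omega')$.
\item $\sin(\pi-\omega')<1/M(A)$, since $\pi-\omega'<\pi-\omega=\arcsin(1/M(A))$.
\item So $q:=M(A)\sin(\pi-\omega')<1$, uniformly in $\lambda$.
\item Since $|\lambda|=t|\cos\theta|\leqslant t$, the Neumann bound gives directly $\|\lambda R(\lambda)\|\leqslant M(A)/(1-q)$.
\end{itemize}

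The rest of item~1 is correct: the union of the discs $D_t$ is exactly the open sector of half-angle $\arcsin(1/M(A))$ around the negative axis, and the Liouville argument for $M(A)\geqslant 1$ is valid. A shorter route to $M(A)\geqslant 1$: for $x\in D(A)\neq\{0\}$, $t(A+tI)^{-1}x-x=-(A+tI)^{-1}Ax\to 0$, so $\|x\|\leqslant M(A)\|x\|$.

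\textbf{Item~2.} The real content hidden in ``the uniform estimate of the functional calculus'' is uniformity in $\lambda$, and it comes from homogeneity.
\begin{itemize}
\item Set $s=|\lambda|^{1/\nu}$, $\phi=\arg\lambda$ and $h_\phi(w)=e^{i\phi}(e^{i\phi}-w^{\nu})^{-1}$. Then $\lambda(\lambda-z^{\nu})^{-1}=h_\phi(z/s)$, so $\lambda(\lambda-A^\nu)^{-1}=h_\phi(A/s)$.
\item $A/s$ has the same sectoriality constants as $A$.
\item Choose $\omega''\in(\omega_A,\pi)$ with $\nu\omega''<\omega'$. On $S_{\omega''}$, $h_\phi(w)-(1+w)^{-1}$ is $O(|w|^{\nu})$ at $0$ and $O(|w|^{-\nu})$ at $\infty$, uniformly for $\omega'\leqslant|\phi|\leqslant\pi$.
\item Hence the Dunford-integral bound for $h_\phi(A/s)$ is independent of $\lambda$.
\end{itemize}

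Also, if you define $A^\nu$ as the closure of the Balakrishnan operator on $D(A)$, you must first identify it with the functional-calculus power before using functional-calculus rules. This is done in \cite{haase2006}: $D(A)$ is a core for $A^\nu$ and Balakrishnan's formula holds there. Defining $A^\nu$ by the functional calculus from the start avoids this detour.
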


\subsection{A fundamental property on the $H^{\infty }$-calculus}

This property is based on the results of \cite{cowling1996}.

We set 
\begin{equation*}
H^{\infty }(S_{\omega })=\left\{ f:f\text{ is a bounded holomorphic function
on }S_{\omega }\right\} ,
\end{equation*}%
with $\omega \in (0,\pi)$. The property is stated as follows

\begin{Prop}
\label{Prop inv f} Let $A$ an injective sectorial operator with dense range.
If $f\in H^{\infty }(S_{\omega })$ is such that $1/f\in H^{\infty
}(S_{\omega })$ and 
\begin{equation*}
(1/f)(A)\in \mathcal{L}(E),
\end{equation*}%
then $f(A)$ is invertible with bounded inverse and 
\begin{equation}
\left[ f(A)\right] ^{-1}=(1/f)(A),  \label{HinfiniCalcul}
\end{equation}%
where $f(A)$ is defined by a Dunford's integral. For more details, see \cite{cowling1996} or \cite{dore-labbas2011}.
\end{Prop}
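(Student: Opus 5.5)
The plan is to work inside the extended (McIntosh) holomorphic functional calculus for the injective sectorial operator $A\in Sect(\omega_A)$, with $\omega_A<\omega$. For $\psi\in H_0^\infty(S_\omega)$, i.e.\ $\psi$ bounded holomorphic with $|\psi(z)|\leqslant C\min(|z|^s,|z|^{-s})$ for some $s>0$, the operator $\psi(A)$ is defined by the Dunford integral
\begin{equation*}
\psi(A)=\frac{1}{2i\pi}\int_{\partial S_{\omega'}}\psi(z)\,(z-A)^{-1}\,dz\in\mathcal{L}(E),\qquad \omega_A<\omega'<\omega .
\end{equation*}
For general $f\in H^\infty(S_\omega)$, we take the regularizer $e(z)=z/(1+z)^2$, which lies in $H_0^\infty$. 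Then $e(A)=A(I+A)^{-2}$ is bounded, and it is injective because $A$ is. We set
\begin{equation*}
f(A):=e(A)^{-1}(ef)(A),\qquad D(f(A))=\{x\in E:(ef)(A)x\in R(e(A))\}.
\end{equation*}
This operator is closed, and the definition is independent of the regularizer, as in \cite{haase2006}. I would first recall the two basic facts of this calculus: the homomorphism property $\psi_1(A)\psi_2(A)=(\psi_1\psi_2)(A)$ on $H_0^\infty$, which comes from the resolvent identity and Cauchy's theorem, and the fact that bounded operators commuting with the resolvent commute with every $\psi(A)$.

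\textbf{Step 1: the constant function.} Show that $\mathbf 1(A)=I$. Here $\mathbf 1(A)=e(A)^{-1}e(A)$ with domain $\{x:e(A)x\in R(e(A))\}=E$, since $e(A)$ is injective. This step uses the injectivity of $A$. The density of the range of $A$, together with injectivity, is what makes the calculus consistent, i.e.\ independent of the regularizer.

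\textbf{Step 2: the product rule.} This is the core of the argument. For $f,g\in H^\infty(S_\omega)$ we claim
\begin{equation*}
f(A)\,g(A)\subset (fg)(A),\qquad D\big(f(A)g(A)\big)=D(g(A))\cap D\big((fg)(A)\big).
\end{equation*}
To prove the inclusion, let $x\in D(g(A))$ with $g(A)x\in D(f(A))$. Using the $H_0^\infty$ homomorphism property and the commutation of $e(A)$ with $(eg)(A)$ and $(ef)(A)$, compute
\begin{equation*}
e(A)^2 f(A)g(A)x=(ef)(A)\,e(A)g(A)x=(ef)(A)(eg)(A)x=(e^2fg)(A)x .
\end{equation*}
Hence $x\in D((fg)(A))$ when $e^2$ is used as regularizer, and $(fg)(A)x=f(A)g(A)x$. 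For the domain equality, reverse the computation: if $x\in D(g(A))\cap D((fg)(A))$, we must show $g(A)x\in D(f(A))$. This amounts to showing that $(ef)(A)g(A)x$ lies in $R(e(A))$, which follows from $e(A)(ef)(A)g(A)x=(ef)(A)(eg)(A)x=e(A)^2(fg)(A)x$ and the injectivity of $e(A)$. This domain bookkeeping is the main obstacle, and I would write it out carefully.

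\textbf{Step 3: conclusion.} Apply Step 2 with $g=1/f$, where $(1/f)(A)\in\mathcal{L}(E)$, so $D((1/f)(A))=E$. We obtain $f(A)(1/f)(A)\subset\mathbf 1(A)=I$ with domain $E\cap E=E$. Thus $(1/f)(A)$ maps $E$ into $D(f(A))$ and $f(A)(1/f)(A)=I$, so $f(A)$ is surjective. Next apply Step 2 with the roles exchanged, $g=f$ and $1/f$ in front. This gives $(1/f)(A)f(A)\subset I$ with domain $D(f(A))\cap E=D(f(A))$, so $(1/f)(A)f(A)x=x$ for every $x\in D(f(A))$, and $f(A)$ is injective. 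Therefore $f(A):D(f(A))\to E$ is bijective, its inverse is the bounded operator $(1/f)(A)$, and \eqref{HinfiniCalcul} holds.
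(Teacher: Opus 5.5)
Your argument is correct. It is the standard proof from the extended (McIntosh) functional calculus: the regularizer $z/(1+z)^2$, the product rule $f(A)g(A)\subset(fg)(A)$ with $D(f(A)g(A))=D(g(A))\cap D((fg)(A))$, and then $\mathbf 1(A)=I$ applied with $g=1/f$ and with $g=f$. The paper does not prove this proposition itself but delegates it to \cite{cowling1996} and \cite{dore-labbas2011} (see also \cite{haase2006}), and your proposal reproduces what those references do.

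One side remark in your proposal should be corrected, although nothing depends on it. Independence of the regularizer follows from the commutativity of the $H_0^\infty$-calculus and the injectivity of the regularizers alone. The density of $R(A)$ is therefore never used in your proof; it matters only for other features of the calculus, such as $f(A)$ being densely defined.
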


\subsection{Interpolation spaces}

We recall some properties on real interpolation spaces.

\begin{Def}
Let $T_{1}:D(T_{1})\subset E\longrightarrow E$ a linear operator such that 
\begin{equation}
(0,+\infty) \subset \rho (T_{1})\quad \text{and}\quad \exists
~C>0:\forall ~t>0,\quad \Vert t(T_{1}-tI)^{-1}\Vert _{\L (E)}\leqslant C.
\label{def T1}
\end{equation}%
Let $m\in \NN\setminus \{0\}$, $\theta \in (0,1)$ and $q\in [1,+\infty]$. We will use the following interpolation spaces characterized
in \cite{grisvard1969}: 
\begin{equation*}
(D(T_{1}^{m}),E)_{\theta ,q}=(E,D(T_{1}^{m}))_{1-\theta ,q}.
\end{equation*}
In particular, for $m=1$, we have the characterisation 
\begin{equation*}
(D(T_{1}),E)_{\theta ,q}:=\left\{ \psi \in E:t\longmapsto t^{1-\theta }\Vert
T_{1}(T_{1}-tI)^{-1}\psi \Vert _{E}\in L_{\ast }^{q}(0,+\infty ;\CC)\right\}
,
\end{equation*}%
where $L_{\ast }^{q}(0,+\infty ;\CC)$ is given by 
\begin{equation*}
L_{\ast }^{q}(0,+\infty ;\CC):=\left\{ f\in L^{q}(0,+\infty ):\left(
\int_{0}^{+\infty }|f(t)|^{q}\frac{dt}{t}\right) ^{1/q}<+\infty \right\}
,\quad \text{for }q\in \lbrack 1,+\infty \lbrack ,
\end{equation*}%
and for $q=+\infty $, by 
\begin{equation*}
L_{\ast }^{\infty }(0,+\infty ;\CC):=\left\{ f\text{ is measurable on }
(0,+\infty) : \underset{t\in (0,+\infty)}{\text{ess sup}} |f(t)|<+\infty \right\} .
\end{equation*}
We also define, for every $m\in \NN\setminus \{0\}$ 
\begin{equation*}
(D(T_{1}),E)_{m+\theta ,q}\;:=\;\left\{ \psi \in D(T_{1}^{m}):T_{1}^{m}\psi
\in (D(T_{1}),E)_{\theta ,q}\right\} ,
\end{equation*}%
and 
\begin{equation*}
(E,D(T_{1}))_{m+\theta ,q}\;:=\;\left\{ \psi \in D(T_{1}^{m}):T_{1}^{m}\psi
\in (E,D(T_{1}))_{\theta ,q}\right\} .
\end{equation*}
\end{Def}

\begin{Rem}
\label{Rem Réitération} We remark that $T_{1}^{m}$ is closed for every $%
m\in \NN\setminus \{0\}$ as $\rho (T_{1})\neq \emptyset $; on the other
hand, when $m\theta <1$, we have 
\begin{equation*}
(D(T_{1}^{m}),E)_{\theta ,q}=(E,D(T_{1}^{m}))_{1-\theta
,q}=(E,D(T_{1}))_{m-m\theta ,q}=(D(T_{1}),E)_{(m-1)+m\theta ,q}\subset
D(T_{1}^{m-1});
\end{equation*}%
see details in \cite{lunardi1996}, (2.1.13), p. 43, or \cite{grisvard1969},
p. 676, Theorem 6.
\end{Rem}
\begin{Rem} \label{remtriebel} 
Let $a,b\in \mathbb{R}$, $a<b$, $n\in \mathbb{N}\setminus \{0\}$ and $T_{2}$ the generator of a bounded analytic semigroup in $E$; we recall 
\begin{equation}
\left\{ 
\begin{array}{l}
x\longmapsto e^{(x-a)T_{2}}\psi \in L^{p}\left( a,b;E\right) \text{ and }%
x\longmapsto e^{(b-x)T_{2}}\psi \in L^{p}\left( a,b;E\right) \text{ for
every }\psi \in E,\medskip \\ 
x\longmapsto T_{2}^{n}e^{(x-a)T_{2}}\psi \in L^{p}\left( a,b;E\right)
\Longleftrightarrow \psi \in \left( D\left( T_{2}^{n}\right) ,E\right) _{%
\frac{1}{np},p},\medskip \\ 
x\longmapsto T_{2}^{n}e^{(b-x)T_{2}}\psi \in L^{p}\left( a,b;E\right)
\Longleftrightarrow \psi \in \left( D\left( T_{2}^{n}\right) ,E\right) _{%
\frac{1}{np},p},%
\end{array}%
\right.  \label{Reg triebel}
\end{equation}
where $p\in (1,+\infty)$; see Theorem p. 96 in \cite{triebel1978} for the two last statements.

Moreover, by the reiteration Theorem, it follows that, the three following properties are equivalent

\begin{enumerate}
\item $x\longmapsto e^{(x-a)T_2} \psi \in W^{n,p}(a,b;E)\cap
L^p\left(a,b;D(T_2^n)\right)$,

\item $x\longmapsto e^{(b-x)T_2} \psi \in W^{n,p}(a,b;E)\cap
L^p\left(a,b;D(T_2^n)\right)$,

\item $\psi \in \left(D(T_2),E\right)_{n-1+\frac{1}{p},p}$.
\end{enumerate}
\end{Rem}

\subsection{Recall on the Dore-Venni Theorem}

We now recall the famous Theorem of Dore and Venni, see \cite{dore-venni1987}%
, where the complex Banach space $E$ is supposed to be UMD, see \cite{bourgain1983}
and \cite{burkholder1981}.

\begin{Def}
Let $\theta \in [0,\pi)$. We denote by BIP$(E,\theta )$, the
class of injective sectorial operators $-T_{3}$ such that

\begin{itemize}
\item[$i)$] $\overline{D(T_3)} = \overline{R(T_3)} = E,$

\item[$ii)$] $\forall~ s \in \RR, \quad (-T_3)^{is} \in \L (E),$

\item[$iii)$] $\exists~ C \geq 1 ,~ \forall~ s \in \RR, \quad
\|(-T_3)^{is}\|_{\L (E)} \leq C e^{|s|\theta}$,
\end{itemize}

see~\cite{pruss-sohr1990}, p. 430.
\end{Def}

\begin{Th}
\label{Th DorVen} Let $-T_{3}\in $ BIP\thinspace $(E,\theta )$ with $\theta
\in (0,\pi/2)$ and \mbox{$g\in L^p(a,b;E)$}, where $p\in (1,+\infty)$ and $a,b\in \mathbb{R}$ with $a<b$. Then, for almost every $x\in
(a,b)$, we have 
\begin{equation*}
\int_{a}^{x}e^{(x-s)T_{3}}g(s)\,ds\in D(T_{3})\quad \text{and}\quad
\int_{x}^{b}e^{(s-x)T_{3}}g(s)\,ds\in D(T_{3}).
\end{equation*}
Moreover
\begin{equation*}
x\longmapsto T_{3}\int_{a}^{x}e^{(x-s)T_{3}}g(s)\,ds\in L^{p}(a,b;E)\quad 
\text{and}\quad x\longmapsto T_{3}\int_{x}^{b}e^{(s-x)T_{3}}g(s)\,ds\in
L^{p}(a,b;E).
\end{equation*}
\end{Th}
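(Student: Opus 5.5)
The statement is Theorem~\ref{Th DorVen}, and I would prove it by the operator-sum method of Dore and Venni, working in the space $X := L^{p}(\mathbb{R};E)$. The first step is to reduce to the whole line. Set $\tilde g := g$ on $(a,b)$ and $\tilde g := 0$ elsewhere. Then for $x\in(a,b)$,
\[
u(x) := \int_{a}^{x} e^{(x-s)T_3} g(s)\,ds = \int_{-\infty}^{x} e^{(x-s)T_3}\tilde g(s)\,ds .
\]
Formally $u$ solves $u' - T_3 u = \tilde g$ on $\mathbb{R}$. The second integral $\int_x^b e^{(s-x)T_3}g(s)\,ds$ reduces to the first by the reflection $x\mapsto a+b-x$, which preserves $L^{p}(a,b;E)$. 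So it suffices to show that $u(x)\in D(T_3)$ for a.e.\ $x$ and that $T_3u\in X$.

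Invertibility of $-T_3$ is not assumed, and I would remove this issue by a shift. For $\varepsilon>0$, replace $T_3$ by $T_\varepsilon := T_3-\varepsilon I$. Since $-T_3\in$ BIP$(E,\theta)$, the operator $-T_\varepsilon=\varepsilon I-T_3$ is again in BIP$(E,\theta')$ for some $\theta'<\pi/2$, and it is now invertible. On the bounded interval the factor $e^{\varepsilon(x-s)}$ lies between $e^{-\varepsilon(b-a)}$ and $1$. One has
\[
e^{(x-s)T_3} = e^{\varepsilon(x-s)}e^{(x-s)T_\varepsilon},
\]
and I would move this scalar factor into $g$ by the following device. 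The function $v(x)=e^{-\varepsilon x}u(x)$ satisfies $v'-T_\varepsilon v = e^{-\varepsilon x}\tilde g$, whose right-hand side is again in $L^p$ on $(a,b)$. Moreover $T_3u=e^{\varepsilon x}(T_\varepsilon v+\varepsilon v)$, and $v$ is obviously in $L^p$. Hence it suffices to treat the case where $-T_3$ is invertible.

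Next I would set up the two commuting operators on $X$. Let $\mathcal A$ be the pointwise extension of $-T_3$, that is $(\mathcal A w)(x)=-T_3w(x)$ on $L^{p}(\mathbb{R};D(T_3))$, and let $\mathcal B=d/dx$ with domain $W^{1,p}(\mathbb{R};E)$. Because $E$ is UMD and $1<p<\infty$, the space $X$ is UMD. The operator $\mathcal A$ inherits BIP$(X,\theta)$ directly, since $\mathcal A^{is}$ acts pointwise. The operators $\mathcal A$ and $\mathcal B$ are resolvent commuting. The key input is that $\mathcal B$ (shifted by a small $\delta>0$ if needed, then letting $\delta$ be absorbed as above) belongs to BIP$(X,\pi/2+\eta)$ for every $\eta>0$. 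Via the Fourier transform, $\mathcal B^{is}$ is the multiplier $(i\xi)^{is}$, which is $e^{-\pi s/2}|\xi|^{is}$ for $\xi>0$ and $e^{\pi s/2}|\xi|^{is}$ for $\xi<0$. This is a combination of the Hilbert transform and the multiplier $|\xi|^{is}$, and both are bounded on $L^{p}(\mathbb{R};E)$ exactly because $E$ is UMD, with polynomial growth in $s$.

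With these ingredients, $\theta+\pi/2<\pi$, and the Dore--Venni sum argument applies. One defines
\[
S := \frac{1}{2i}\int_{c-i\infty}^{c+i\infty}\frac{\mathcal A^{-z}\mathcal B^{z-1}}{\sin(\pi z)}\,dz,\qquad 0<c<1 .
\]
The integral converges in $\mathcal L(X)$ because $\|\mathcal A^{-z}\mathcal B^{z-1}\|\le Ce^{(\theta+\pi/2+\eta)|\mathrm{Im}\,z|}$ while $|\sin\pi z|^{-1}\sim e^{-\pi|\mathrm{Im}\,z|}$. One then checks that $S=(\mathcal A+\mathcal B)^{-1}$ and that $\mathcal A S$ is bounded, being given by the analogous integral with $\mathcal A^{1-z}\mathcal B^{z-1}$. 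Since $u=(\mathcal A+\mathcal B)^{-1}\tilde g$, which I would identify using uniqueness of $L^p$ solutions and the explicit semigroup formula, we get $u\in D(\mathcal A)$. That is, $u(x)\in D(T_3)$ for a.e.\ $x$ and $T_3u=-\mathcal A u\in X$. Restricting to $(a,b)$ gives the claim.

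I expect the main obstacle to be the BIP bound for $\mathcal B=d/dx$ on $L^{p}(\mathbb{R};E)$ with angle arbitrarily close to $\pi/2$, since this is the only point where the UMD hypothesis enters, through the boundedness of the vector-valued Hilbert transform. I would take this from Bourgain and Burkholder \cite{bourgain1983,burkholder1981} rather than reprove it. A lesser technical point is rigorously verifying that $S$ inverts $\mathcal A+\mathcal B$, which I would do by computing $S$ on the dense set of $\tilde g$ lying in $D(\mathcal A)\cap D(\mathcal B)$ and then using closedness.
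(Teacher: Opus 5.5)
The paper gives no proof of this statement; it quotes it from Dore and Venni (1987). Your route is essentially the classical derivation behind that citation: apply the operator-sum theorem to $-T_3$ and $d/dx$ in an $L^p$ space of $E$-valued functions. That derivation is usually done on $L^p(a,b;E)$ with $u(a)=0$ built into the domain of $d/dx$, which makes that operator invertible and avoids your zero extension and decay shift.

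Your reductions are fine: the reflection $x\mapsto a+b-x$, the conjugation $v=e^{-\varepsilon x}u$, the pointwise extension $\mathcal A$, the multiplier description of $\mathcal B^{is}$, and the identification of $(\mathcal A+\mathcal B)^{-1}\tilde g$ with $u$ by variation of constants. Two small points:
\begin{itemize}
\item $e^{\varepsilon(x-s)}$ lies between $1$ and $e^{\varepsilon(b-a)}$, not between $e^{-\varepsilon(b-a)}$ and $1$. This is harmless, since you never use it.
\item That $\varepsilon I-T_3$ again has bounded imaginary powers of angle $<\pi/2$ is a genuine theorem (Pr\"uss--Sohr, or Monniaux, which the paper itself uses for such shifts). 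It should be cited, not asserted.
\end{itemize}

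\textbf{The gap.} The real gap is the step you treat as routine: that $\mathcal A S$ is bounded, ``being given by the analogous integral with $\mathcal A^{1-z}\mathcal B^{z-1}$''. On the line $\mathrm{Re}\,z=c\in(0,1)$, the factor $\mathcal A^{1-z}=\mathcal A^{1-c}\mathcal A^{-i\,\mathrm{Im}\,z}$ is in general unbounded. So this is not a convergent integral in $\mathcal L(X)$.

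To get bounded operator factors you must move the contour to $\mathrm{Re}\,z=1$. There the integrand becomes $\mathcal A^{-is}\mathcal B^{is}x$ times $1/\sin(\pi(1+is))=i/\sinh(\pi s)$, which has a non-integrable singularity at $s=0$. What you obtain is $\frac12 x$ (the half-residue at $z=1$) plus a principal-value integral of the group $s\mapsto\mathcal A^{-is}\mathcal B^{is}$ against a kernel behaving like $1/s$, i.e.\ a Hilbert transform of that group orbit. Bounding this is the heart of the Dore--Venni theorem, and it is exactly where the UMD property of $X=L^p(\mathbb R;E)$ is used a second time.

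So your claim that the BIP bound for $d/dx$ is ``the only point where the UMD hypothesis enters'' is wrong. The density-plus-closedness argument you sketch for $S=(\mathcal A+\mathcal B)^{-1}$ also cannot be completed without this estimate, since passing to the limit through the closed operator $\mathcal A$ needs $\|\mathcal A Sx\|\le C\|x\|$ on the dense set.

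\textbf{The fix.} You have already verified the hypotheses: a UMD space, resolvent-commuting sectorial operators, both invertible after the $\delta$-shift, and angles with $\theta'+\pi/2+\eta<\pi$. So invoke the Dore--Venni theorem as a black box. It says that $\mathcal A+\mathcal B$ on $D(\mathcal A)\cap D(\mathcal B)$ is closed and boundedly invertible, hence $(\mathcal A+\mathcal B)^{-1}\tilde g\in D(\mathcal A)$. Alternatively, supply the principal-value estimate yourself.
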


\section{Operational formulation of system \eqref{Pb trans en A}}

We consider, in the UMD Banach space $X=L^{p}(0,1)$, where $p\in
(1,+\infty)$, linear operators $Q$, $Q^{-}$ and $Q^{+}$ defined by 
\begin{equation*}
\left\{ 
\begin{array}{lll}
D(Q) & = & \{\phi \in W^{2,p}(0,1):\phi (0)=\phi (1)=0\} \\ 
\ecart(Q\phi )(y) & = & \phi ^{\prime \prime }(y),
\end{array}%
\right.
\end{equation*}
\begin{equation*}
\left\{ 
\begin{array}{lll}
D(Q^{-}) & = & \{\phi \in W^{2,p}(0,1):\phi (0)=\phi (1)=0\} \\ 
\ecart(Q^{-}\phi )(y) & = & \phi''(y) - \dfrac{1-\sigma_{H}^{-}(1+\nu f_H^-)
}{d_{H}^{-}}\phi (y),
\end{array}
\right.
\end{equation*}
and
\begin{equation*}
\left\{ 
\begin{array}{lll}
D(Q^{+}) & = & \{\phi \in W^{2,p}(0,1):\phi (0)=\phi (1)=0\} \\ 
\ecart(Q^{+}\phi )(y) & = & \phi''(y) - \dfrac{1-\sigma_{H}^{+}(1+f_H^+)}{d_{H}^{+}}\phi (y).
\end{array}
\right.
\end{equation*}

\begin{Prop}
\label{Prop Qpm} Assume that \eqref{hyp const} holds, then operators $Q$, $Q^{-}$ and $Q^{+}$ are linear closed with dense domains in $X$ and verify 
\begin{equation}
\left\{ 
\begin{array}{l}
\forall \,\eta \in (0,\pi),~S_{\pi -\eta }\cup \{0\}\subset \rho
(Q) \\ 
\ecart

\exists \,C>0,~\forall \,z\in S_{\pi -\eta }\cup \{0\},\quad\Vert
(Q-zI)^{-1}\Vert _{\L (X)}\leqslant \dfrac{C}{1+|z|},%
\end{array}%
\right.  \label{estimation Q}
\end{equation}%
and 
\begin{equation}
\left\{ 
\begin{array}{l}
\forall \,\eta \in (0,\pi),~S_{\pi -\eta }\cup \{0\}\subset \rho(Q^{\pm }) \\ 
\ecart

\exists \,C>0,~\forall \,z\in S_{\pi -\eta }\cup \{0\}, \quad \Vert (Q^{\pm}-zI)^{-1}\Vert _{\L (X)}\leqslant \dfrac{C}{1+|z|}.
\end{array}
\right.  \label{estimation Q_i}
\end{equation}
Moreover, there exists an open ball $B(0,\delta )$, $\delta >0$, such that $\overline{B(0,\delta )}\subset \rho (Q)$, $\overline{B(0,\delta )}\subset \rho (Q^{\pm })$ and the estimates \eqref{estimation Q} and \eqref{estimation Q_i} remain true in $S_{\pi-\eta }\cup \overline{B(0,\delta )}$. Here, $\rho (Q)$ and $\rho (Q^{\pm })$
denote respectively the resolvent sets of $Q$ and $Q^{\pm }$.
\end{Prop}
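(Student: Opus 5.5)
Observe that $Q^{-}=Q-c^{-}I$ and $Q^{+}=Q-c^{+}I$, where
$$c^{-}=\frac{1-\sigma_{H}^{-}(1+\nu f_H^-)}{d_H^-},\qquad c^{+}=\frac{1-\sigma_H^+(1+f_H^+)}{d_H^+}.$$
Hypothesis \eqref{hyp const} says exactly that $-c^{\pm}\leqslant r_0<\pi^2$, i.e. $c^{\pm}\geqslant -r_0>-\pi^2$. The plan is to get everything from the explicit spectral theory of the Dirichlet Laplacian $Q$ on $(0,1)$, and then transfer it to $Q^{\pm}$ by a shift. Linearity is obvious. Density of $D(Q)$ in $X=L^p(0,1)$ follows since $C_c^\infty(0,1)\subset D(Q)$. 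Closedness follows from $Q$ being invertible: we write $Q^{-1}$ explicitly with the Green kernel $G(y,s)=-\min(y,s)(1-\max(y,s))$, which is a bounded operator $X\to W^{2,p}(0,1)$, and an operator whose inverse is bounded is closed. The same holds for $Q^\pm$, since bounded shifts preserve these properties.

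For \eqref{estimation Q}, I would solve $\phi''-z\phi=f$, $\phi(0)=\phi(1)=0$, explicitly. Set $\sqrt z$ with $\operatorname{Re}\sqrt z>0$ for $z\in S_{\pi-\eta}$, so that $\sqrt z\in S_{(\pi-\eta)/2}$. The solution is given by the kernel
$$\frac{-\sinh(\sqrt z\,\min(y,s))\,\sinh(\sqrt z\,(1-\max(y,s)))}{\sqrt z\,\sinh\sqrt z}.$$
Rewriting with exponentials, the denominator is $\sqrt z\, e^{\sqrt z}(1-e^{-2\sqrt z})/2$. Proposition~\ref{Prop DoreLabbas 2}(3), applied with $\alpha=(\pi-\eta)/2$ to $2\sqrt z$, gives the lower bound $|1-e^{-2\sqrt z}|\geqslant c>0$ for $|z|\geqslant 1$. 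The kernel is then bounded by $C|\sqrt z|^{-1}e^{-\operatorname{Re}\sqrt z\,|y-s|}$ plus similar reflected terms. Since $\operatorname{Re}\sqrt z\geqslant|\sqrt z|\cos\alpha$, Young's inequality yields $\|(Q-z)^{-1}\|\leqslant C/|z|$ for large $|z|$ in the sector.

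On the compact remaining part, I would use that the spectrum of $Q$ is $\{-k^2\pi^2:k\geqslant1\}$, which is disjoint from $S_{\pi-\eta}\cup\{0\}$. Continuity of the resolvent then gives a uniform bound there, and the two regimes combine into $C/(1+|z|)$. A slightly cleaner alternative is Lumer–Phillips/dissipativity-type estimates in $L^p$, but the kernel route is self-contained and uses the paper's Proposition~\ref{Prop DoreLabbas 2}.

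For $Q^\pm$, we have $(Q^\pm-zI)^{-1}=(Q-(z+c^\pm)I)^{-1}$, so the key step, which I expect to be the main obstacle, is to show that $z+c^\pm$ stays in a region where the resolvent of $Q$ is controlled. If $c^\pm\geqslant0$, then $z+c^\pm\in S_{\pi-\eta}\cup\{0\}$ and $|z+c^\pm|\geqslant |z|\sin$-type lower bounds hold. This is handled by Proposition~\ref{Prop DoreLabbas}: $|z+c^\pm|\geqslant(|z|+c^\pm)\cos((\pi-\eta)/2)$. If $-\pi^2<c^\pm<0$, the shifted point can leave the sector. Here I would use that $\rho(Q)\supset\CC\setminus(-\infty,-\pi^2]$, since the spectrum is $\{-k^2\pi^2\}$ and the resolvent is analytic off it, together with a resolvent bound $C/(1+|w|)$ valid on $S_{\pi-\eta'}\cup\overline{B(0,\pi^2-r_0')}$ for a slightly smaller angle. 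Concretely, for $|z|$ large, $z+c^\pm$ lies in $S_{\pi-\eta/2}$ with $|z+c^\pm|\sim|z|$. For bounded $z$, the set $\{z+c^\pm: z\in S_{\pi-\eta}\cup\{0\},\ |z|\leqslant R\}$ is compact and avoids $(-\infty,-\pi^2]$, because $\operatorname{Re}(z+c^\pm)\leqslant-\pi^2$ would force either $z$ real negative, which is excluded, or $z=0$ with $c^\pm\leqslant-\pi^2$, which is excluded by $r_0<\pi^2$. Continuity then gives a uniform bound.

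Finally, the ball $\overline{B(0,\delta)}$: since $\rho(Q)$ and $\rho(Q^\pm)$ are open and contain $0$ (for $Q^\pm$ because $c^\pm>-\pi^2$ means $-c^\pm\notin\sigma(Q)$), we can choose $\delta<\min(\pi^2,\pi^2-r_0)$. The resolvent is continuous on the compact set $\overline{B(0,\delta)}$, hence bounded there, so the estimates extend to $S_{\pi-\eta}\cup\overline{B(0,\delta)}$.
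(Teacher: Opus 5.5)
Your proof is correct and follows the same route as the paper's very terse argument. You write $Q^{\pm}=Q-c^{\pm}I$, use $\sigma(Q)=\{-k^2\pi^2:k\geqslant 1\}$ together with \eqref{hyp const} to get $c^{\pm}\geqslant -r_0>-\pi^2$, and shift the resolvent estimate for $Q$. That estimate you actually prove via the explicit Green kernel and Proposition~\ref{Prop DoreLabbas 2}, where the paper simply calls it well known.

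There are three small slips to fix:
\begin{itemize}
\item $0\in\rho(Q^{\pm})$ because $c^{\pm}\notin\sigma(Q)$, not because $-c^{\pm}\notin\sigma(Q)$.
\item In the bounded-$z$ step, the condition to rule out is $z+c^{\pm}\in(-\infty,-\pi^2]$, not merely $\mathrm{Re}(z+c^{\pm})\leqslant-\pi^2$.
\item Your compactness arguments should be applied to the closures of the sets, since $S_{\pi-\eta}$ is open.
\end{itemize}
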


\begin{proof}
The proof of \eqref{estimation Q} is well known, while the one of \eqref{estimation Q_i} follows essentially from \eqref{hyp const} and the fact that the second derivative operator with Dirichlet boundary conditions admits for eigenvalues the sequence $-k^2\pi^2$, $k\in \NN^*$.
\end{proof}

\begin{Rem}
The definition of sectorial operator $A$ has been given in \refD{Def op sect}. We recall that this notion corresponds, in the Hilbert case, to the positivity of the scalar product $<Au,u>$. A typical example of such an operator A is an elliptic operator.

Here, $-Q$, $-Q^-$ and $-Q^+$ are clearly sectorial and satisfy \refD{Def op sect}.
\end{Rem}

Using the usual notation for vector-valued functions, we set 
\begin{equation*}
\left\{ 
\begin{array}{lll}
h_{I}(x)(y) & := & h_{I}(x,y) \\ 
h_{S}(x)(y) & := & h_{S}(x,y)
\end{array}%
\right. \quad \text{and}\quad \left\{ 
\begin{array}{lll}
g_{I}(x)(y) & := & g_{I}(x,y)=\dfrac{n_{I}}{d_{H}^{-}}(x,y) \\ 
\ecart g_{S}(x)(y) & := & g_{S}(x,y)=\dfrac{n_{S}}{d_{H}^{+}}(x,y).
\end{array}
\right.
\end{equation*}
Thus, system \eqref{Pb trans en A} can be written as 
\begin{equation}
\left\{ 
\begin{array}{l}
\begin{array}{llll}
h_{I}''(x) + Q^- h_{I}(x)-\dfrac{\lambda}{d_{H}^{-}}h_{I}(x) & = & 
g_{I}(x), & \text{a.e.~}x\in (-\ell ,0) \\ \ecart 
h_{S}''(x) + Q^+ h_{S}(x)-\dfrac{\lambda }{d_{H}^{+}}h_{S}(x) & 
= & g_{S}(x), & \text{a.e.~}x\in (0,L)
\end{array}
\\ 
\ecart
\begin{array}{lll}
h_{I}(-\ell ) & = & h_{S}(L)=0 \\ 
h_{I}(0) & = & h_{S}(0) \\ 
\beta_{I} h'_{I}(0) & = & \beta_{S} h'_{S}(0),%
\end{array}%
\end{array}%
\right.  \label{syst w eda}
\end{equation}%
where 
\begin{equation*}
g_{I}\in L^{p}(-\ell ,0;X)=L^{p}(-\ell ,0;L^{p}(0,1))=L^{p}(\Omega _{I}),
\end{equation*}%
and 
\begin{equation*}
g_{S}\in L^{p}(0,L;X)=L^{p}(0,L;L^{p}(0,1))=L^{p}(\Omega _{S}).
\end{equation*}

\section{Spectral properties of operators $Q^{\pm }$}

Now, we have to specify the sector of the spectral parameter $\lambda$. In all the sequel 
\begin{equation}
\lambda \in S_{\pi -\varepsilon },  \label{lambda dans S}
\end{equation}%
where $\varepsilon \in \left(0, \pi/2\right)$ is fixed and will be specified later. We set 
\begin{equation*}
Q_{\lambda }^{-} = Q - \dfrac{1-\sigma_H^-(1+\nu f_H^-)}{d_H^-}I - \dfrac{\lambda }{d_{H}^{-}} I= Q^{-}-\dfrac{\lambda }{d_{H}^{-}}I\quad \text{and}\quad
Q_{\lambda }^{+} = Q - \dfrac{1-\sigma_H^+ (1+f_H^+)}{d_H^+} I - \dfrac{\lambda }{d_{H}^{+}}I = Q^{+} -\dfrac{\lambda }{d_{H}^{+}}I;
\end{equation*}
thus, we have 
\begin{equation*}
D(Q_{\lambda }^{-})=D(Q^{-})=D(Q^{+})=D(Q_{\lambda }^{+}).
\end{equation*}
According to \refP{Prop Qpm}, operators $-Q^{\pm }$ are sectorial in $X$. The same is true for operators $-Q_{\lambda }^{\pm }$. Indeed, we have $(-\infty ,0]\subset \rho (-Q_{\lambda }^{\pm })$. We set 
\begin{equation*}
M(-Q_{\lambda }^{\pm }):=M(-Q_{\lambda }^{\pm },\pi ):=\sup_{t>0}\Vert
t(-Q_{\lambda }^{\pm }+tI)^{-1}\Vert _{\L (X)}.
\end{equation*}%
Due to \refP{Prop sect}, for all $\lambda \in S_{\pi -\varepsilon}$, we obtain
\begin{equation*}
M(-Q_{\lambda }^{\pm })\leqslant \sup_{t>0}\left( \frac{t}{\cos \left( \frac{1}{2}\arg \left( \dfrac{\lambda }{d_{H}^{\pm }}+t\right) \right) }\,\dfrac{1}{\left\vert \dfrac{\lambda }{d_{H}^{\pm }}+t\right\vert }\right) .
\end{equation*}%
Two cases are possible:

\begin{enumerate}
\item If $|\arg (\lambda )|<\pi /2$, then: 
\begin{equation*}
\forall \,t>0,\quad \left\vert \dfrac{\lambda }{d_{H}^{\pm }}+t\right\vert
\geqslant t.
\end{equation*}

\item If $\pi /2\leqslant |\arg (\lambda )|<\pi -\varepsilon $, then 
\begin{equation*}
\forall \,t>0,\quad \left\vert \dfrac{\lambda }{d_{H}^{\pm }}+t\right\vert
\geqslant t\sin (\alpha ) \geqslant t\sin (\varepsilon ),
\end{equation*}
since $\alpha \in (\varepsilon ,\pi /2]$. 
\end{enumerate}
Thus, in these two cases, there exists a constant $C>0$ independent of $\lambda $ such that 
\begin{equation*}
M(-Q_{\lambda }^{\pm })\leqslant \frac{C}{\cos \left( \dfrac{\pi
-\varepsilon }{2}\right) }=\frac{C}{\sin (\varepsilon )}<+\infty ,
\end{equation*}%
so 
\begin{equation*}
-Q_{\lambda }^{\pm }\in Sect\left( \pi -\arcsin \left( \dfrac{1}{%
M(-Q_{\lambda }^{\pm })}\right) \right) .
\end{equation*}%
We deduce that the two following operators 
\begin{equation*}
P_{\lambda }^{-}=-\left( -\left( Q^{-}-\dfrac{\lambda }{d_{H}^{-}}I\right)
\right) ^{1/2}\quad \text{and}\quad P_{\lambda }^{+}=-\left( -\left( Q^{+}-
\dfrac{\lambda }{d_{H}^{+}}I\right) \right) ^{1/2},
\end{equation*}
are well defined and have the same domain
\begin{equation*}
D(P_{\lambda }^{-})=D(P_{\lambda }^{+})=D\left( \left( -Q^{-}\right)
^{1/2}\right) =D\left( \left( -Q^{+}\right) ^{1/2}\right) .
\end{equation*}%
It is well known that operators $-\left( -Q^{-}\right) ^{1/2}$ and $-\left(
-Q^{+}\right) ^{1/2}$ generate analytic semigroups in $X$, see \cite{balakrishnan1960}.

Using Lemma 4.2 in \cite{flmt2021} as well as estimates (28) and (29) in 
\cite{flmt2021}, there exist $\varepsilon ^{\pm }>0$ and $C^{\pm }>0$ independent
of $\lambda $ such that 
\begin{equation}\label{estimation de la resolvante de P pm}
\left\{ 
\begin{array}{l}
\forall \,z\in \left\{ z\in \CC\setminus \{0\}:|\arg (z)|\leqslant \dfrac{\pi}{2} +\varepsilon^{\pm }\right\} \\ \ecart
\left\Vert (P_{\lambda }^{\pm }-zI)^{-1}\right\Vert _{\L (X)} \leqslant \dfrac{C^{\pm }}{\sqrt{1+|\lambda |}+|z|}.
\end{array}
\right.  
\end{equation}

\section{Resolution of system \eqref{syst w eda}}

From this section, we will assume in the sequel that
\begin{equation*}
\lambda \in S_{2(\pi -\varepsilon)/3},
\end{equation*}
where $\varepsilon \in \left(0,\pi/2\right)$ is fixed small enough; this will allow us to apply the $H^{\infty }$-calculus for the resolution of system \eqref{syst w eda}.

\subsection{Calculus of the determinant operator}
\label{Sect Calcul Det}
Using operators $P_{\lambda }^{-}$ and $P_{\lambda}^{+}$, system \eqref{syst w eda} writes as follows 
\begin{equation}
\left\{ 
\begin{array}{l}
\begin{array}{llll}
h_{I}''(x)-(P_{\lambda }^{-})^{2}h_{I}(x) & = & g_{I}(x), & 
\text{a.e.~}x\in (-\ell ,0) \\ \ecart 
h''_{S}(x)-(P_{\lambda }^{+})^{2}h_{S}(x) & = & g_{S}(x), & \text{a.e.~}x\in (0,L)
\end{array}
\\ \ecart
\begin{array}{lll}
h_{I}(-\ell ) & = & h_{S}(L)=0 \\ 
h_{I}(0) & = & h_{S}(0) \\ 
\beta _{I}h'_{I}(0) & = & \beta_{S}h'_{S}(0).
\end{array}
\end{array}
\right.  \label{syst w eda P}
\end{equation}
Applying a similar method as in \cite{menad2019}, the solution $(h_{I},h_{S})$ of system \eqref{syst w eda P} writes as 
\begin{equation*}
\left\{ 
\begin{array}{llll}
h_{I}(x) & = & e^{(x+\ell )P_{\lambda }^{-}}\gamma _{I}+e^{-xP_{\lambda
}^{-}}\delta _{I}+w_{I}(g_{I})(x), & x\in (-\ell ,0) \\ 
\ecart h_{S}(x) & = & e^{xP_{\lambda }^{+}}\gamma _{S}+e^{(L-x) P_{\lambda}^{+}}\delta _{S}+w_{S}(g_{S})(x), & x\in (0,L),
\end{array}
\right.
\end{equation*}%
where 
\begin{equation*}
\left\{ 
\begin{array}{llll}
w_{I}(g_{I})(x) & = & \dis\frac{1}{2}\int_{-\ell }^{x}e^{(x-t)P_{\lambda
}^{-}}(P_{\lambda }^{-})^{-1}g_{I}(t)~dt+\frac{1}{2}\int_{x}^{0}e^{(t-x)P_{\lambda }^{-}}(P_{\lambda }^{-})^{-1}g_{I}(t)~dt, & x\in (-\ell ,0) \\ 
\ecart w_{S}(g_{S})(x) & = & \dis\frac{1}{2}\int_{0}^{x}e^{(x-t) P_{\lambda}^{+}}(P_{\lambda }^{+})^{-1}g_{S}(t)~dt+\frac{1}{2}\int_{x}^{L}e^{(t-x)P_{\lambda }^{+}}(P_{\lambda }^{+})^{-1}g_{S}(t)~dt, & x\in (0,L).
\end{array}
\right.
\end{equation*}%
It is easy to see that all these integrals are well defined due to the semigroups properties. We will prove later that functions $h_{I}$ and $h_{S}$ satisfy the following optimal regularity 
\begin{equation*}
\left\{ 
\begin{array}{l}
h_{I}\in W^{2,p}(-\ell ,0;X)\cap L^{p}\left( -\ell ,0;D\left( (P_{\lambda
}^{-})^{2}\right) \right) \\ \ecart 
h_{S}\in W^{2,p}(0,L;X)\cap L^{p}\left( 0,L;D\left( (P_{\lambda
}^{+})^{2}\right) \right).
\end{array}
\right.
\end{equation*}
Now, we have to find the constants $\gamma _{I}$, $\gamma _{S}$, $\delta _{I}$
and $\delta _{S}$. Thanks to the boundary conditions, we have
\begin{equation*}
\left\{ 
\begin{array}{lllll}
0 & = & h_{I}(-\ell ) & = & \gamma _{I}+e^{\ell P_{\lambda }^{-}}\delta
_{I}+w_{I}(g_{I})(-\ell ) \\ \ecart
0 & = & h_{S}(L) & = & e^{LP_{\lambda }^{+}}\gamma _{S}+\delta
_{S}+w_{S}(g_{S})(L),
\end{array}
\right. 
\end{equation*}
where 
\begin{equation*}
\left\{ 
\begin{array}{lll}
w_{I}(g_{I})(-\ell ) & = & \dis\frac{1}{2}\int_{-\ell }^{0}e^{(t+\ell
)P_{\lambda }^{-}}(P_{\lambda }^{-})^{-1}g_{I}(t)~dt \\ 
\ecart w_{S}(g_{S})(L) & = & \dis\frac{1}{2}\int_{0}^{L}e^{(L-t)P_{\lambda
}^{+}}(P_{\lambda }^{+})^{-1}g_{S}(t)~dt.%
\end{array}%
\right. 
\end{equation*}
Hence
\begin{equation}
\left\{ 
\begin{array}{lllll}
\gamma _{I} & = & -e^{\ell P_{\lambda }^{-}}\delta _{I}-w_{I}(g_{I})(-\ell )
&  &  \\ 
\ecart\delta _{S} & = & -e^{LP_{\lambda }^{+}}\gamma _{S}-w_{S}(g_{S})(L). & 
& 
\end{array}%
\right.   \label{gamma I et delta S}
\end{equation}%
On the other hand, we have 
\begin{equation*}
\left\{ 
\begin{array}{llll}
h_{I}^{\prime }(x) & = & P_{\lambda }^{-}e^{(x+\ell )P_{\lambda }^{-}}\gamma
_{I}-P_{\lambda }^{-}e^{-xP_{\lambda }^{-}}\delta _{I}+w_{I}^{\prime
}(g_{I})(x), & x\in (-\ell ,0) \\ \ecart 
h_{S}^{\prime }(x) & = & P_{\lambda }^{+}e^{xP_{\lambda }^{+}}\gamma_{S}-P_{\lambda }^{+}e^{(L-x)P_{\lambda }^{+}}\delta _{S}+w_{S}^{\prime}(g_{S})(x), & x\in (0,L),
\end{array}
\right. 
\end{equation*}%
with 
\begin{equation*}
\left\{ 
\begin{array}{llll}
w_{I}^{\prime }(g_{I})(x) & = & \dis\frac{1}{2}\int_{-\ell}^{x}e^{(x-t)P_{\lambda }^{-}}g_{I}(t)~dt-\frac{1}{2}\int_{x}^{0}e^{(t-x)P_{\lambda }^{-}}g_{I}(t)~dt, & x\in (-\ell ,0) \\ 
\ecart w_{S}^{\prime }(g_{S})(x) & = & \dis\frac{1}{2}
\int_{0}^{x}e^{(x-t)P_{\lambda }^{+}}g_{S}(t)~dt-\frac{1}{2}
\int_{x}^{L}e^{(t-x)P_{\lambda }^{+}}g_{S}(t)~dt, & x\in (0,L).
\end{array}
\right. 
\end{equation*}
From the transmission conditions, we formally deduce that 
\begin{equation}
\left\{ 
\begin{array}{lll}
e^{\ell P_{\lambda }^{-}}\gamma _{I}+\delta _{I}+w_{I}(g_{I})(0) & = & 
\gamma _{S}+e^{LP_{\lambda }^{+}}\delta _{S}+w_{S}(g_{S})(0) \\ \ecart
\beta _{I}\left( P_{\lambda }^{-}e^{\ell P_{\lambda }^{-}}\gamma_{I}-P_{\lambda }^{-}\delta _{I}+w_{I}^{\prime }(g_{I})(0)\right)  & = & 
\beta _{S}\left( P_{\lambda }^{+}\gamma _{S}-P_{\lambda }^{+}e^{LP_{\lambda}^{+}}\delta _{S}+w_{S}^{\prime }(g_{S})(0)\right),
\end{array}
\right.   \label{syst det CT}
\end{equation}%
where 
\begin{equation*}
\left\{ 
\begin{array}{llll}
w_{I}(g_{I})(0) & = & \dis\frac{1}{2}\int_{-\ell }^{0}e^{-tP_{\lambda
}^{-}}(P_{\lambda }^{-})^{-1}g_{I}(t)~dt &  \\ 
\ecart w_{S}(g_{S})(0) & = & \dis\frac{1}{2}\int_{0}^{L}e^{tP_{\lambda
}^{+}}(P_{\lambda }^{+})^{-1}g_{S}(t)~dt, & 
\end{array}%
\right. 
\end{equation*}%
and 
\begin{equation}
\left\{ 
\begin{array}{lllll}
w_{I}^{\prime }(g_{I})(0) & = & \dis\frac{1}{2}\int_{-\ell
}^{0}e^{-tP_{\lambda }^{-}}g_{I}(t)~dt & = & P_{\lambda }^{-}w_{I}(g_{I})(0)
\\ 
\ecart w_{S}^{\prime }(g_{S})(0) & = & \dis-\frac{1}{2}\int_{0}^{L}e^{tP_{%
\lambda }^{+}}g_{S}(t)~dt & = & -P_{\lambda }^{+}w_{S}(g_{S})(0).%
\end{array}%
\right.   \label{w' = w}
\end{equation}%
We will see, later, that the equalities in \eqref{syst det CT} are well defined since $\delta _{I}\in D(P_{\lambda }^{-})$ and $\gamma _{S}\in D(P_{\lambda }^{+})$. On the other hand, we know that integrals in \eqref{w' = w} belong respectively to $D(P_{\lambda }^{-})$ and $D(P_{\lambda }^{+})$ from Proposition~1.2, (ii), p. 20 in \cite{sinestrari1985}. 

Applying $(P_{\lambda }^{-})^{-1}$ on the second line
of system \eqref{syst det CT}, we obtain 
\begin{equation*}
\left\{ 
\begin{array}{lll}
e^{\ell P_{\lambda }^{-}}\gamma _{I}+\delta _{I}+w_{I}(g_{I})(0) & = & 
\gamma _{S}+e^{LP_{\lambda }^{+}}\delta _{S}+w_{S}(g_{S})(0) \\ \\
\beta_{I}\left( e^{\ell P_{\lambda }^{-}}\gamma _{I}-\delta
_{I}+(P_{\lambda }^{-})^{-1}w_{I}^{\prime }(g_{I})(0)\right)  & = & \beta
_{S}\left( (P_{\lambda }^{-})^{-1}P_{\lambda }^{+}\gamma _{S}-(P_{\lambda
}^{-})^{-1}P_{\lambda }^{+}e^{LP_{\lambda }^{+}}\delta _{S}\right) \\ \ecart
&&\dis  + \beta_S (P_{\lambda}^{-})^{-1}w_{S}^{\prime }(g_{S})(0),
\end{array}%
\right. 
\end{equation*}%
and, since $(P_{\lambda }^{-})^{-1}$ and $P_{\lambda }^{+}$ commute on the
domain of $P_{\lambda }^{+}$, we have 
\begin{equation*}
\left\{ 
\begin{array}{lll}
e^{\ell P_{\lambda }^{-}}\gamma _{I}+\delta _{I}+w_{I}(g_{I})(0) & = & 
\gamma _{S}+e^{LP_{\lambda }^{+}}\delta _{S}+w_{S}(g_{S})(0) \\ \\
\beta _{I}\left( e^{\ell P_{\lambda }^{-}}\gamma _{I}-\delta_{I}+(P_{\lambda }^{-})^{-1}w_{I}^{\prime }(g_{I})(0)\right)  & = & \beta_{S}\left( P_{\lambda }^{+}(P_{\lambda }^{-})^{-1}\gamma _{S}-P_{\lambda}^{+}(P_{\lambda }^{-})^{-1}e^{LP_{\lambda }^{+}}\delta _{S}\right) \\ \ecart
&& \dis + \beta_S (P_{\lambda}^{-})^{-1}w_{S}^{\prime }(g_{S})(0).
\end{array}
\right. 
\end{equation*}%
Using \eqref{w' = w}, we deduce that
\begin{equation}
\left\{ 
\begin{array}{lll}
e^{\ell P_{\lambda }^{-}}\gamma _{I}+\delta _{I}+w_{I}(g_{I})(0) & = & 
\gamma _{S}+e^{LP_{\lambda }^{+}}\delta _{S}+w_{S}(g_{S})(0) \\ \ecart
\beta _{I}\left( e^{\ell P_{\lambda }^{-}}\gamma _{I}-\delta
_{I}+w_{I}(g_{I})(0)\right)  & = & \beta _{S}\,P_{\lambda }^{+}(P_{\lambda
}^{-})^{-1}\left( \gamma _{S}-e^{LP_{\lambda }^{+}}\delta
_{S}-w_{S}(g_{S})(0)\right).
\end{array}%
\right.   \label{syst det CT 2}
\end{equation}%
We set
\begin{equation*}
R_{I}=w_{I}(g_{I})(0)-e^{\ell P_{\lambda }^{-}}w_{I}(g_{I})(-\ell )=\frac{1}{2} \int_{-\ell }^{0}e^{-tP_{\lambda }^{-}}\left( I-e^{2(t+\ell )P_{\lambda}^{-}}\right) (P_{\lambda }^{-})^{-1}g_{I}(t)~dt,
\end{equation*}%
and 
\begin{equation*}
R_{S}=w_{S}(g_{S})(0)-e^{LP_{\lambda }^{+}}w_{S}(g_{S})(L)=\frac{1}{2}%
\int_{0}^{L}e^{tP_{\lambda }^{+}}\left( I-e^{2(L-t)P_{\lambda }^{+}}\right)
(P_{\lambda }^{+})^{-1}g_{S}(t)~dt.
\end{equation*}%
Using \eqref{gamma I et delta S}, system \eqref{syst det CT 2} becomes 
\begin{equation*}
\left\{ 
\begin{array}{lll}
\left( I-e^{2\ell P_{\lambda }^{-}}\right) \delta _{I}+R_{I} & = & \left(
I-e^{2LP_{\lambda }^{+}}\right) \gamma _{S}+R_{S} \\ \ecart
\beta_{I}\left( -\left( I+e^{2\ell P_{\lambda }^{-}}\right) \delta_{I}+R_{I}\right) & = & \beta_{S}\,P_{\lambda }^{+}(P_{\lambda}^{-})^{-1}\left( \left( I+e^{2LP_{\lambda }^{+}}\right) \gamma_{S}-R_{S}\right),
\end{array}
\right. 
\end{equation*}
hence 
\begin{equation}
\left\{ 
\begin{array}{lll}
\left( I-e^{2\ell P_{\lambda }^{-}}\right) \delta _{I}-\left(
I-e^{2LP_{\lambda }^{+}}\right) \gamma _{S} & = & R_{S}-R_{I} \\ 
\ecart
\beta_{I}\,\left( I+e^{2\ell P_{\lambda }^{-}}\right) \delta
_{I} + \beta _{S}\,P_{\lambda }^{+}(P_{\lambda }^{-})^{-1}\left(I+e^{2LP_{\lambda }^{+}}\right) \gamma _{S} & = & \beta _{I}\,R_{I} + \beta
_{S}\,P_{\lambda }^{+}(P_{\lambda }^{-})^{-1}R_{S}.
\end{array}%
\right.   \label{syst final}
\end{equation}
Note that all the coefficient operators of system \eqref{syst final} are bounded. Therefore, the abstract determinant of this system is given by 
\begin{equation*}
\begin{array}{lll}
D_{\lambda } & = & \beta_{I} \left( I+e^{2\ell P_{\lambda }^{-}}\right)
\left( I-e^{2LP_{\lambda }^{+}}\right) + \beta _{S}\,P_{\lambda }^{+}\left(
P_{\lambda }^{-}\right) ^{-1}\left( I-e^{2\ell P_{\lambda }^{-}}\right)
\left( I+e^{2LP_{\lambda }^{+}}\right)  \\ \ecart 
& = & \beta_{I}\left( I+e^{2\ell P_{\lambda }^{-}}\right) \left(I-e^{2LP_{\lambda }^{+}}\right) \Pi _{\lambda },
\end{array}%
\end{equation*}
where 
\begin{equation*}
\Pi _{\lambda }=I+\frac{\beta _{S}}{\beta _{I}}\,P_{\lambda
}^{+}(P_{\lambda }^{-})^{-1}\left( I-e^{2\ell P_{\lambda }^{-}}\right)
\left( I+e^{2LP_{\lambda }^{+}}\right) \left( I+e^{2\ell P_{\lambda
}^{-}}\right) ^{-1}\left( I-e^{2LP_{\lambda }^{+}}\right) ^{-1}.
\end{equation*}
We used the fact that operators $\left( I-e^{2LP_{\lambda }^{+}}\right)$ and $\left( I+e^{2\ell P_{\lambda}^{-}}\right)$ are boundedly invertible respectively in virtue of Proposition 2.3.6, p. 60, in \cite{lunardi1996} and Lemma 5.2, p. 1883 in \cite{dore-labbas2011}.

\subsection{Inversion of the determinant}

We recall that $\lambda \in S_{2(\pi -\varepsilon)/3}$ and 
\begin{equation*}
P_{\lambda }^{-}=-\left( -\left( Q-\frac{1-\sigma _{H}^{-}(1+\nu f_H^-)}{d_{H}^{-}}I - \frac{\lambda }{d_{H}^{-}}I\right) \right) ^{1/2}\quad \text{and}\quad P_{\lambda}^{+}=-\left( -\left( Q-\frac{1-\sigma _{H}^{+}(1+f_{H}^+)}{d_{H}^{+}}I - \frac{\lambda }{d_{H}^{+}}I\right) \right) ^{1/2}.
\end{equation*}
We set
\begin{equation*}
\lambda _{-}=-\frac{1-\sigma _{H}^{-}(1+\nu f_H^-)}{d_{H}^{-}}-\frac{\lambda }{d_{H}^{-}} \quad \text{and}\quad \lambda _{+}=-\frac{1-\sigma _{H}^{+}(1+f_{H}^+)}{d_{H}^{+}}-\frac{\lambda }{d_{H}^{+}}.
\end{equation*}
Thus 
\begin{equation*}
P_{\lambda }^{-}=-\left( -Q-\lambda _{-}I\right) ^{1/2}\quad \text{and}\quad
P_{\lambda }^{+}=-\left( -Q-\lambda _{+}I\right) ^{1/2}.
\end{equation*}%
For all $z\in \pi ^{2}+S_{(\pi -\varepsilon)/3}$, we define the following function
\begin{equation}
f_{\lambda }(z)=1+\frac{\beta_{S}}{\beta _{I}}\frac{\sqrt{z-\lambda _{+}}%
}{\sqrt{z-\lambda _{-}}}\frac{\left( 1-e^{-2\ell \sqrt{z-\lambda _{-}}%
}\right) \left( 1+e^{-2L\sqrt{z-\lambda _{+}}}\right) }{\left( 1+e^{-2\ell 
\sqrt{z-\lambda _{-}}}\right) \left( 1-e^{-2L\sqrt{z-\lambda _{+}}}\right) },
\label{f_lambda (z)}
\end{equation}
in order to have 
\begin{equation*}
f_{\lambda }(-Q)=\Pi _{\lambda }.
\end{equation*}
We will analyze the following complex quantities
\begin{equation*}
Z_{+}=\sqrt{z-\lambda _{+}}\quad \text{and}\quad Z_{-}=\sqrt{z-\lambda _{-}},
\end{equation*}
as well as 
\begin{equation*}
\left( 1\pm e^{-2L\sqrt{z-\lambda _{+}}}\right) \quad \text{and}\quad \left(
1\pm e^{-2\ell \sqrt{z-\lambda _{-}}}\right) 
\end{equation*}
for 
\begin{equation*}
z\in \pi ^{2}+S_{(\pi -\varepsilon)/3}\quad \text{and}\quad \lambda \in
S_{2(\pi -\varepsilon)/3}.
\end{equation*}
We will then need the following essential technical lemma in order to apply \refP{Prop DoreLabbas 2}. Then 
\begin{equation}
Z_{+}=\sqrt{z-\lambda _{+}}\in S_{(\pi -\varepsilon)/3}\subset S_{\pi
/3}\quad \text{and}\quad Z_{-}=\sqrt{z-\lambda _{-}}\in S_{(\pi -\varepsilon)/3}\subset S_{\pi /3},
\end{equation}
thus
\begin{equation*}
2LZ_{+}=2L\sqrt{z-\lambda _{+}}\in S_{(\pi -\varepsilon)/3}\subset S_{\pi
/3}\quad \text{and}\quad 2\ell Z_{-}=2\ell\sqrt{z-\lambda _{-}}\in S_{(\pi
-\varepsilon)/3}\subset S_{\pi /3}.
\end{equation*}

\begin{Lem}\label{Lem estim Zpm} 
Let $z\in \pi ^{2}+S_{(\pi -\varepsilon)/3}$ and $\lambda \in S_{2(\pi -\varepsilon)/3}$, with $0 < \varepsilon < \pi/2$ fixed. We have

\begin{enumerate}
\item $\dis |Z_\pm| \geqslant \sqrt{\dfrac{\sqrt{3}\, (\pi^2-r_0)}{2} \,
\sin(\varepsilon/2)}$,

\item $\left\{
\begin{array}{lll}
\left|\arg\left(1 - e^{-2L Z_+}\right) - \arg\left(1 +
e^{-2LZ_+}\right)\right| & < & \dfrac{\pi}{3}-\dfrac{\varepsilon}{3} \\ 
\ecart \left|\arg\left(1 - e^{-2\ell Z_-}\right) - \arg\left(1 + e^{-2\ell Z_-}\right)\right| & < & \dfrac{\pi}{3}-\dfrac{\varepsilon}{3},
\end{array}
\right.$

\item $\left\{
\begin{array}{lllll}
\dis\left|1 + e^{-2LZ_+} \right| & \geqslant & \dis 1 - e^{-\pi/(2
\tan((\pi-\varepsilon)/3))} & \geqslant & \dis 1 - e^{-\pi/2\sqrt{3}} \\ 
\ecart \dis \left|1 + e^{-2\ell Z_-} \right| & \geqslant & \dis 1 -
e^{-\pi/(2 \tan((\pi-\varepsilon)/3))} & \geqslant & \dis 1 - e^{-\pi/2\sqrt{3}},
\end{array}
\right.$

\item $\left\{
\begin{array}{lllll}
\dis \dfrac{L|Z_+|}{1 + L|Z_+|} & \leqslant & \dis |1 - e^{-2LZ_+}| & 
\leqslant & \dis \dfrac{4L|Z_+|}{1 + L|Z_+|} \\ 
\ecart \dis \dfrac{\ell|Z_-|}{1 + \ell|Z_-|} & \leqslant & \dis |1 -
e^{-2\ell Z_-}| & \leqslant & \dis \dfrac{4\ell|Z_-|}{1 + \ell|Z_-|}.
\end{array}
\right.$
\end{enumerate}
\end{Lem}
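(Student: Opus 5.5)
The plan is to first pin down the geometry of $z-\lambda_\pm$, and then read off items 2--4 from \refP{Prop DoreLabbas 2} applied to $2LZ_+$ and $2\ell Z_-$.

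\textbf{Step 1: the sector containing $z-\lambda_\pm$.} Write
\[
z-\lambda_\pm=(z-\pi^2)+\Bigl(\pi^2-r_\pm\Bigr)+\frac{\lambda}{d_H^\pm},\qquad r_-=\frac{\sigma_H^-(1+\nu f_H^-)-1}{d_H^-},\quad r_+=\frac{\sigma_H^+(1+f_H^+)-1}{d_H^+}.
\]
By \eqref{hyp const}, $\pi^2-r_\pm\geqslant \pi^2-r_0>0$. We have $z-\pi^2\in S_{(\pi-\varepsilon)/3}$ and $\lambda/d_H^\pm\in S_{2(\pi-\varepsilon)/3}$. Hence $(z-\pi^2)+(\pi^2-r_\pm)$ lies in $S_{(\pi-\varepsilon)/3}$, and $z-\lambda_\pm$ is a sum of two nonzero complex numbers whose arguments differ by less than $\pi-\varepsilon$. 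A sum of points in the two sectors lies in the largest sector, so $z-\lambda_\pm\in S_{2(\pi-\varepsilon)/3}$. Taking the principal square root gives $Z_\pm\in S_{(\pi-\varepsilon)/3}$, which is the inclusion already used in the text. Scaling by $2L$ or $2\ell$ does not change the argument.

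\textbf{Step 2: the lower bound in item 1.} Apply \refP{Prop DoreLabbas} to $z_1=(z-\pi^2)+(\pi^2-r_\pm)$ and $z_2=\lambda/d_H^\pm$. Since $|\arg z_1-\arg z_2|<\pi-\varepsilon$, we get
\[
|z-\lambda_\pm|\geqslant |z_1|\cos\bigl((\pi-\varepsilon)/2\bigr)=|z_1|\sin(\varepsilon/2).
\]
For $|z_1|$, apply \refP{Prop DoreLabbas} again, or argue by elementary geometry, to $(z-\pi^2)$ and the positive real $\pi^2-r_\pm$. The argument difference is less than $(\pi-\varepsilon)/3<\pi/3$, so the cosine factor is at least $\cos(\pi/6)=\sqrt3/2$. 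This gives $|z_1|\geqslant \tfrac{\sqrt3}{2}(\pi^2-r_0)$. Taking square roots yields item 1.

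The delicate point is the degenerate case $z=\pi^2$ or $\lambda$ small. There $\arg$ is undefined and \refP{Prop DoreLabbas} is stated only for nonzero numbers. I handle this by continuity: the bound holds trivially when one summand vanishes, since the remaining summand is at least $\pi^2-r_0$ in modulus. I expect the main care to lie here and in checking the sector-addition claim.

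\textbf{Step 3: items 2--4.} Apply \refP{Prop DoreLabbas 2} with $\alpha=(\pi-\varepsilon)/3\in(0,\pi/2)$ to $2LZ_+$ and $2\ell Z_-$.

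Item 1 of that proposition gives item 2 directly.

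Item 2 of that proposition gives $|1+e^{-2LZ_+}|\geqslant 1-e^{-\pi/(2\tan((\pi-\varepsilon)/3))}$. Since $\tan((\pi-\varepsilon)/3)\leqslant\tan(\pi/3)=\sqrt3$, this is at least $1-e^{-\pi/2\sqrt3}$, which is item 3.

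Item 3 of that proposition gives
\[
\frac{2L|Z_+|\cos\alpha}{1+2L|Z_+|\cos\alpha}\leqslant |1-e^{-2LZ_+}|\leqslant \frac{4L|Z_+|}{1+2L|Z_+|\cos\alpha}.
\]
Because $\alpha<\pi/3$, we have $2\cos\alpha>1$. The map $t\mapsto t/(1+t)$ is increasing, so the lower bound is at least $L|Z_+|/(1+L|Z_+|)$. The upper bound's denominator is at least $1+L|Z_+|$, so the upper bound is at most $4L|Z_+|/(1+L|Z_+|)$. This is item 4, and the same argument works for $Z_-$ with $\ell$.
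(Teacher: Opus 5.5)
Your proof is correct and follows essentially the paper's route. Item 1 comes from two applications of Proposition~\ref{Prop DoreLabbas}: first to $z-\pi^2$ plus the positive shift, giving the factor $\sqrt3/2$, then to $z-r_\pm$ plus $\lambda/d_H^\pm$, giving $\sin(\varepsilon/2)$; items 2--4 follow from Proposition~\ref{Prop DoreLabbas 2} with $\alpha=(\pi-\varepsilon)/3$, which the paper calls a direct application, and your Step 1 usefully justifies the inclusion $Z_\pm\in S_{(\pi-\varepsilon)/3}$ that the paper only asserts. Writing the shift as $\pi^2-r_\pm\geqslant\pi^2-r_0$ simply absorbs the paper's case split on the sign of $1-\sigma_H^\pm(\cdots)$, and the degenerate case you worry about cannot occur, since $0\notin S_\omega$ by Definition~\ref{defsector}.
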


\begin{proof}
Statements 2., 3., and 4. are proved by a direct application of \refP{Prop DoreLabbas 2}. 

Let us show statement 1. Since $z \in \pi^2 + S_{(\pi - \varepsilon)/3}$, there exists $z' \in S_{(\pi - \varepsilon)/3}$ such that 
$$z = z' + \pi^2.$$ 
We set
$$z_- = z + \frac{1-\sigma_H^-(1+\nu f_H^-)}{d_H^-} \quad \text{and} \quad z_+ = z + \frac{1-\sigma_H^+(1+f_H^+)}{d_H^+},$$
thus
$$z-\lambda_- = z_- + \frac{\lambda}{d_H^-} \quad \text{and} \quad z - \lambda_+ = z_+ + \frac{\lambda}{d_H^+}.$$
If 
$$\frac{1-\sigma_H^-(1+\nu f_H^-)}{d_H^-} < 0,$$
then
$$|z_-|=\left|z + \frac{1-\sigma_H^-(1+\nu f_H^-)}{d_H^-}\right| = \left|z' + \pi^2 + \frac{1-\sigma_H^-(1+\nu f_H^-)}{d_H^-}\right| = \left|z' + \pi^2 - \frac{\sigma_H^-(1+\nu f_H^-)-1}{d_H^-}\right|.$$
From assumption \eqref{hyp const}, we have
$$\pi^2 - \frac{\sigma_H^-(1+\nu f_H^-)-1}{d_H^-} \geqslant \pi^2 - r_0 > 0,$$
and due to \refP{Prop DoreLabbas}, we obtain 
$$\begin{array}{lll}
\dis |z_-| & = & \dis \left|z' + \pi^2 - \frac{\sigma_H^-(1+\nu f_H^-)-1}{d_H^-}\right| \\ \ecart

& \geqslant & \dis \left(|z'| + \left|\pi^2 - \frac{\sigma_H^-(1+\nu f_H^-)-1}{d_H^-}\right|\right) \cos\left(\frac{\arg(z')}{2}\right) \\ \ecart

& \geqslant & \dis \left|\pi^2 - \frac{\sigma_H^-(1+\nu f_H^-)-1}{d_H^-}\right|\,\cos\left(\frac{\arg(z')}{2}\right) \\ \ecart

& \geqslant & \dis (\pi^2 - r_0) \frac{\sqrt{3}}{2}.
\end{array}$$
Now, if
$$\frac{1-\sigma_H^-(1+\nu f_H^-)}{d_H^-} > 0,$$
then
$$|z_-| = \left|z + \frac{1-\sigma_H^-(1+\nu f_H^-)}{d_H^-}\right| = \left|z' + \pi^2 + \frac{1-\sigma_H^-(1+\nu f_H^-)}{d_H^-}\right|,$$
and from \refP{Prop DoreLabbas}, it follows that
$$\begin{array}{lll}
\dis |z_-| & = & \dis \left|z' + \pi^2 + \frac{1-\sigma_H^-(1+\nu f_H^-)}{d_H^-}\right| \\ \ecart

& \geqslant & \dis \left(|z'| + \left|\pi^2 + \frac{1-\sigma_H^-(1+\nu f_H^-)}{d_H^-}\right|\right) \cos\left(\frac{\arg(z')}{2}\right) \\ \ecart

& \geqslant & \dis \left(\pi^2 + \frac{1-\sigma_H^-(1+\nu f_H^-)}{d_H^-}\right)\,\cos\left(\frac{\arg(z')}{2}\right) \\ \ecart

& \geqslant & \dis \pi^2 \,\cos\left(\frac{\arg(z')}{2}\right) \\ \ecart

& \geqslant & \dis (\pi^2 - r_0) \cos\left(\frac{\pi}{6}\right) \\ \ecart

& \geqslant & \dis (\pi^2 - r_0) \frac{\sqrt{3}}{2}.
\end{array}$$
Note that in this case, we have not used assumption \eqref{hyp const}.

Similarly, we deduce that
$$|z_+| = \left|z + \frac{1-\sigma_H^+(1 + f_H^+)}{d_H^+}\right| \geqslant (\pi^2 - r_0) \frac{\sqrt{3}}{2};$$
thus $z_\pm \in S_{(\pi-\varepsilon)/3}$. We then obtain
$$\begin{array}{lll}
\dis \left|z - \lambda_\pm\right| = \left|z_\pm + \frac{\lambda}{d_H^\pm}\right| & \geqslant & \dis \left(|z_\pm| + \left| \frac{\lambda}{d_H^\pm}\right|\right) \left|\cos\left(\frac{\arg(z_\pm) - \arg(\lambda)}{2}\right)\right| \\ \ecart

& \geqslant & \dis |z_\pm| \left|\cos\left(\frac{\arg(z_\pm) - \arg(\lambda)}{2}\right)\right| \\ \ecart

& \geqslant & \dis \frac{\sqrt{3}\,(\pi^2 - r_0)}{2} \left|\cos\left(\frac{\arg(z_\pm) - \arg(\lambda)}{2}\right)\right|.
\end{array}$$
On the other hand
$$\left|\arg(z_\pm) - \arg(\lambda)\right| \leqslant |\arg(z_\pm)| + |\arg(\lambda)| < \frac{\pi - \varepsilon}{3} + \frac{2(\pi - \varepsilon)}{3} = \pi - \varepsilon.$$
Hence
$$\begin{array}{lll}
\dis\left|z - \lambda_\pm\right| & \geqslant & \dis \frac{\sqrt{3}\,(\pi^2 - r_0)}{2} \left|\cos\left(\frac{\arg(z_\pm) - \arg(\lambda)}{2}\right)\right| \\ \ecart
& > & \dis \frac{\sqrt{3}\,(\pi^2 - r_0)}{2} \cos\left(\frac{\pi}{2} - \frac{\varepsilon}{2}\right) = \frac{\sqrt{3}\,(\pi^2 - r_0)}{2} \, \sin\left(\frac{\varepsilon}{2}\right) > 0.
\end{array}$$
\end{proof}

Let us go back to the function $f_{\lambda }$, given by \eqref{f_lambda (z)}, where $\lambda $ is fixed in $S_{2(\pi -\varepsilon)/3}$.

\begin{Prop}
Function $f_{\lambda }$, defined by \eqref{f_lambda (z)}, is analytic and bounded on $\pi ^{2}+S_{(\pi -\varepsilon)/3}$.
\end{Prop}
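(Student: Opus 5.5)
We fix $\lambda\in S_{2(\pi-\varepsilon)/3}$ and work on the sector $\pi^2+S_{(\pi-\varepsilon)/3}$. The plan is to write $f_\lambda=1+\frac{\beta_S}{\beta_I}\,F_1\,F_2$ with
\begin{equation*}
F_1(z)=\frac{Z_+}{Z_-},\qquad F_2(z)=\frac{\left(1-e^{-2\ell Z_-}\right)\left(1+e^{-2LZ_+}\right)}{\left(1+e^{-2\ell Z_-}\right)\left(1-e^{-2LZ_+}\right)},
\end{equation*}
and to prove analyticity and boundedness of each factor separately.

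\emph{Analyticity.} By the proof of \refL{Lem estim Zpm}, for every $z$ in the sector the numbers $z-\lambda_\pm$ lie in the open half-plane $S_{\pi-\varepsilon}$, and they are bounded away from $0$ by statement 1. The principal square root is holomorphic on $\CC\setminus(-\infty,0]$, so $z\mapsto Z_\pm$ is holomorphic and nonvanishing. Hence $F_1$ is holomorphic.

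The denominators of $F_2$ also do not vanish. Statement 3 of \refL{Lem estim Zpm} bounds $|1+e^{-2LZ_+}|$ and $|1+e^{-2\ell Z_-}|$ below by $1-e^{-\pi/(2\sqrt3)}>0$. Statement 4 gives $|1-e^{-2LZ_+}|\geqslant L|Z_+|/(1+L|Z_+|)$, which is positive since $|Z_+|>0$, and the same holds for $|1-e^{-2\ell Z_-}|$. Therefore $f_\lambda$ is analytic as a composition and quotient of holomorphic functions with nonvanishing denominators.

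\emph{Boundedness of $F_2$.} Each exponential satisfies $|e^{-2LZ_+}|\leqslant 1$ because $\operatorname{Re}Z_+>0$, so the numerators are bounded by $2$. For the denominators, statement 3 handles the factors $1+e^{-\cdots}$. For $1-e^{-2LZ_+}$ we use statement 4 together with statement 1: since $t\mapsto t/(1+t)$ is increasing,
\begin{equation*}
|1-e^{-2LZ_+}|\geqslant \frac{Lc}{1+Lc},\qquad c=\sqrt{\tfrac{\sqrt3(\pi^2-r_0)}{2}\sin(\varepsilon/2)},
\end{equation*}
a positive constant independent of $z$. The same bound holds with $\ell$ and $Z_-$. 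Hence $|F_2|\leqslant C(\ell,L,\varepsilon,r_0)$.

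\emph{Boundedness of $F_1$ (main obstacle).} We must show that $|z-\lambda_+|/|z-\lambda_-|$ stays bounded, and this is the step where uniformity has to be checked. Write $z-\lambda_\pm=z_\pm+\lambda/d_H^\pm$ as in the proof of \refL{Lem estim Zpm}, where $z_\pm=z+c_\pm$ with real constants $c_\pm$. For the numerator, the triangle inequality gives
\begin{equation*}
|z-\lambda_+|\leqslant |z|+|c_+|+|\lambda|/d_H^+ .
\end{equation*}
For the denominator, we apply \refP{Prop DoreLabbas} with $|\arg z_-|<(\pi-\varepsilon)/3$ and $|\arg\lambda|<2(\pi-\varepsilon)/3$, which gives
\begin{equation*}
|z-\lambda_-|\geqslant (|z_-|+|\lambda|/d_H^-)\sin(\varepsilon/2).
\end{equation*}
Next, $|z_-|\geqslant |z|-|c_-|$, and $|z_-|$ is also bounded below by the positive constant from the lemma's proof; combining the two, $|z_-|\geqslant \kappa(1+|z|)$ for some $\kappa>0$. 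Consequently
\begin{equation*}
\frac{|z-\lambda_+|}{|z-\lambda_-|}\leqslant \frac{|z|+|c_+|+|\lambda|/d_H^+}{\sin(\varepsilon/2)\left(\kappa(1+|z|)+|\lambda|/d_H^-\right)}\leqslant C,
\end{equation*}
with $C$ even independent of $\lambda$. Taking square roots bounds $|F_1|$, and together with the bound on $F_2$ this shows that $f_\lambda$ is bounded on $\pi^2+S_{(\pi-\varepsilon)/3}$.
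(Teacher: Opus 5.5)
Your proof is correct and follows essentially the paper's route: uniform lower bounds on the denominator factors from \refL{Lem estim Zpm} (statements 1, 3 and 4), then an upper bound on $|f_\lambda|$. Your explicit estimate of $|z-\lambda_+|/|z-\lambda_-|$ via \refP{Prop DoreLabbas} supplies the (even $\lambda$-uniform) boundedness of $\sqrt{(z-\lambda_+)/(z-\lambda_-)}$, which the paper's sketch uses without justification; the only slip, calling $S_{\pi-\varepsilon}$ a half-plane when it is wider since $\pi-\varepsilon>\pi/2$, is harmless because you only need it to avoid $(-\infty,0]$.
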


\begin{proof}
We first prove that the denominator in the expression of $f_\lambda$ never vanishes.

For $z \in \pi^2 + S_{(\pi - \varepsilon)/3}$, using \refL{Lem estim Zpm}, we have
$$\left|\sqrt{z-\lambda_-}\left(1 + e^{-2\ell\sqrt{z-\lambda_-}}\right) \left(1 - e^{-2L\sqrt{z-\lambda_+}}\right)\right| \geqslant \dfrac{\sqrt{3}\, (\pi^2-r_0)L}{2} \, \dfrac{\left(1 - e^{-\pi/2\sqrt{3}}\right)\sin(\varepsilon/2)}{1+ L \sqrt{|z - \lambda_+|}} >0.$$
We deduce that function $f_\lambda$ is holomorphic on $\pi^2 + S_{(\pi - \varepsilon)/3}$ and moreover it is bounded since
$$\left|f_\lambda(z)\right| \leqslant 1 + \frac{C \,\beta_S}{\beta_I \left(1 - e^{-\pi/2\sqrt{3}}\right)} \sqrt{\frac{z-\lambda_+}{z - \lambda_-}}\,\frac{1 + L \sqrt{|z - \lambda_+|}}{L \sqrt{|z - \lambda_+|}}, $$
where $C > 0$ is a constant independent of $z$ and $\lambda$.
\end{proof}

We conclude that 
\begin{equation*}
f_{\lambda }(-Q)=\Pi _{\lambda }.
\end{equation*}

\begin{Prop}
\label{Prop inv Det} Let $\varepsilon_0 \in \left(0,\dfrac{\pi}{8}\right)$. Then operator $\Pi _{\lambda}$ is boundedly invertible for all $\lambda \in S_{4(\pi -\varepsilon_0)/7}$. It follows that 
\begin{equation*}
D_{\lambda }^{-1}=\frac{1}{\beta_{I}}\left( I+e^{2\ell P_{\lambda
}^{-}}\right) ^{-1}\left( I-e^{2LP_{\lambda }^{+}}\right) ^{-1}\Pi _{\lambda
}^{-1}.
\end{equation*}
\end{Prop}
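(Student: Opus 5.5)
The plan is to apply \refP{Prop inv f} to the operator $A=-Q-\pi^2 I$ (or, equivalently, to $-Q$ with functions on the shifted sector $\pi^2+S_{(\pi-\varepsilon)/3}$). Then $\Pi_\lambda = f_\lambda(-Q)$ will be inverted as $(1/f_\lambda)(-Q)$. Since $\sigma(-Q)=\{k^2\pi^2\}\subset[\pi^2,\infty)$, \refP{Prop Qpm} gives that $-Q-(\pi^2-\delta)I$ is injective and sectorial with dense range, of any small angle. In particular the spectrum lies inside $\pi^2+S_{(\pi-\varepsilon)/3}$ after a small shift, so the Dunford integral makes sense.

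We have already shown that $f_\lambda$ is holomorphic and bounded there. It therefore remains to prove that $1/f_\lambda$ is bounded, i.e. that $|f_\lambda(z)|\ge c>0$ uniformly in $z$, for $\lambda\in S_{4(\pi-\varepsilon_0)/7}$. The choice $\varepsilon$ with $2(\pi-\varepsilon)/3 = 4(\pi-\varepsilon_0)/7$ relates the two parameters. With $\varepsilon_0<\pi/8$ we get $\varepsilon<\pi/2$, so \refL{Lem estim Zpm} applies.

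The key step is a lower bound for $|1+w|$, where
\[
w=\frac{\beta_S}{\beta_I}\frac{Z_+}{Z_-}\,\frac{1-e^{-2\ell Z_-}}{1+e^{-2\ell Z_-}}\,\frac{1+e^{-2LZ_+}}{1-e^{-2LZ_+}}.
\]
I would control $|\arg w|$ and use \refP{Prop DoreLabbas}:
\[
|1+w|\ge (1+|w|)\cos(\arg w/2)\ge\cos(\arg w/2).
\]
This reduces the problem to showing $|\arg w|\le \pi-\eta$ for some fixed $\eta>0$. By item 2 of \refL{Lem estim Zpm}, each of the two $\tanh$-type quotients has argument less than $(\pi-\varepsilon)/3$. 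For the ratio $Z_+/Z_-$, note that $Z_\pm=\sqrt{z_\pm+\lambda/d_H^\pm}$. Both $z_\pm$ and $\lambda$ lie in sectors, and $z_\pm+\lambda/d^\pm_H$ lies in $S_{\max(\arg)}$, so $|\arg(z-\lambda_\pm)|<2(\pi-\varepsilon)/3$. Hence $|\arg Z_\pm|<(\pi-\varepsilon)/3$ and $|\arg(Z_+/Z_-)|<2(\pi-\varepsilon)/3$. Summing these crude bounds gives $4(\pi-\varepsilon)/3$, which is too large. So the main obstacle is sharpening the argument count.

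My first attempt would be to observe that $z-\lambda_+$ and $z-\lambda_-$ are both of the form (sector-valued $z$ plus positive-real constant) $+\lambda/d^\pm$. Their arguments therefore lie between $\arg z'$-type and $\arg\lambda$ on the same side, so $\arg Z_+$ and $\arg Z_-$ have the same sign and $|\arg Z_+-\arg Z_-|<\max(\ldots)/2=(\pi-\varepsilon)/3$. Similarly, the two quotient factors involve $\arg(1-e^{-u})-\arg(1+e^{-u})$. For $u$ in the upper half-plane this has a definite sign tied to $\arg u$ (the tanh of a number in $S_\alpha$ has argument in $S_\alpha$, same side). Hence the factors $\tanh(\ell Z_-)$ and $1/\tanh(LZ_+)$ partially cancel, giving a total $|\arg w|<\frac{\pi-\varepsilon}{3}\cdot\frac{3}{2}$ or similar. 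The target is that the total is strictly below $\pi$. This is where the exponent $4/7$ and the restriction $\varepsilon_0<\pi/8$ should come from, so I would fit the constants to make $|\arg w|\le \pi-c\,\varepsilon_0$. This yields $|f_\lambda(z)|\ge \sin(c\varepsilon_0/2)$ uniformly, so $1/f_\lambda\in H^\infty$ and $(1/f_\lambda)(-Q)\in\L(X)$. \refP{Prop inv f} then gives $\Pi_\lambda^{-1}=(1/f_\lambda)(-Q)$.

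Finally, since $D_\lambda=\beta_I(I+e^{2\ell P^-_\lambda})(I-e^{2LP^+_\lambda})\Pi_\lambda$ and the first two factors are boundedly invertible (as noted after the definition of $D_\lambda$), inverting the product yields the stated formula. All the operators involved are functions of $Q$ and hence commute, so the order of the factors is immaterial.
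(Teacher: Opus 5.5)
Your architecture matches the paper's: invert $\Pi_\lambda=f_\lambda(-Q)$ via \refP{Prop inv f}, bound $|f_\lambda|$ below by $\cos(\arg\tilde f_\lambda/2)$ using \refP{Prop DoreLabbas}, then peel off the two invertible factors of $D_\lambda$. Your relation $2(\pi-\varepsilon)/3=4(\pi-\varepsilon_0)/7$ is exactly the paper's choice $\varepsilon=\pi/7+6\varepsilon_0/7$.

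The gap is the decisive step, the uniform bound $|\arg w|\le\pi-\eta$. You never establish it, and both facts you use to sharpen the crude count $4(\pi-\varepsilon)/3$ are false.

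First, $\arg Z_+$ and $\arg Z_-$ need not have the same sign. We have $\operatorname{Im}(z-\lambda_\pm)=\operatorname{Im} z'+\operatorname{Im}\lambda/d_H^\pm$. When $d_H^+\neq d_H^-$ and $\operatorname{Im} z'$, $\operatorname{Im}\lambda$ have opposite signs, these two imaginary parts can have opposite signs. For instance, take $d_H^+\gg d_H^-$ and moduli with $d_H^-|z|\ll|\lambda|\ll d_H^+|z|$; then $z-\lambda_+$ is close to $z_+$ while $z-\lambda_-$ is close to $\lambda/d_H^-$. All one knows is that both $\arg(z-\lambda_\pm)$ lie in the interval spanned by $0$, $\arg z'$ and $\arg\lambda$. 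That interval has length less than $(\pi-\varepsilon)/3+2(\pi-\varepsilon)/3=\pi-\varepsilon$, and this can be nearly attained. So the correct bound is $|\arg(Z_+/Z_-)|<(\pi-\varepsilon)/2$, not $(\pi-\varepsilon)/3$.

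Second, there is no ``same side'' property for $\tanh$. For $\operatorname{Re}u>0$,
$$\tanh u=\frac{\sinh(2\operatorname{Re}u)+i\sin(2\operatorname{Im}u)}{\cosh(2\operatorname{Re}u)+\cos(2\operatorname{Im}u)},$$
so the sign of $\arg\tanh u$ is that of $\sin(2\operatorname{Im}u)$. This changes sign infinitely often along any non-real ray of the sector. Hence there is no systematic cancellation between $\tanh(\ell Z_-)$ and $\coth(LZ_+)$, and ``fitting the constants'' cannot replace the computation.

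The missing computation is short and needs no cancellation. Keep the crude bound $2(\pi-\varepsilon)/3$ for the two quotient factors (item 2 of \refL{Lem estim Zpm}), and use the correct bound $(\pi-\varepsilon)/2$ for $Z_+/Z_-$. This gives $|\arg\tilde f_\lambda|<\frac{7}{6}(\pi-\varepsilon)$. With your $\varepsilon=\pi/7+6\varepsilon_0/7$ this equals exactly $\pi-\varepsilon_0$, so $|f_\lambda(z)|\ge\sin(\varepsilon_0/2)$ uniformly in $z$ and $\lambda$. That is where the $7$ in $4/7$ comes from.

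The restriction $\varepsilon_0<\pi/8$ plays no role in this bound. It only ensures $4(\pi-\varepsilon_0)/7>\pi/2$ for the semigroup conclusion. It is also why the sharpening is needed at all: the crude sum $\frac{4}{3}(\pi-\varepsilon)<\pi$ would force $\varepsilon>\pi/4$, i.e. a sector for $\lambda$ of half-angle less than $\pi/2$.
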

\begin{Rem}
To prove that operator $\L$ generates an analytic semigroup, it is necessary that 
$$\frac{4}{7}(\pi -\varepsilon_0) > \frac{\pi}{2},$$
which is true since $\varepsilon_0 < \dfrac{\pi}{8}$.  
\end{Rem}
\begin{proof}
The invertibility of $D_\lambda$ relies on the invertibility of $f_\lambda(-Q)$. We will now show that $f_\lambda(-Q)$ is invertible with bounded inverse by using \refP{Prop inv f}.
 
We recall the following notations for $\lambda \in S_{2(\pi-\varepsilon)/3}$ and $z \in \pi^2 + S_{(\pi-\varepsilon)/3}$
$$z_- = z + \frac{1-\sigma_H^-(1+\nu f_H^-)}{d_H^-} \quad \text{and} \quad z_+ = z + \frac{1-\sigma_H^+(1+f_H^+)}{d_H^+};$$
$$\lambda_- = -\frac{1-\sigma_H^-(1+\nu f_H^-)}{d_H^-} - \frac{\lambda}{d_H^-} \quad \text{and} \quad \lambda_+ = - \frac{1-\sigma_H^+(1+f_H^+)}{d_H^+} - \frac{\lambda}{d_H^+};$$
thus  $z_\pm \in S_{(\pi-\varepsilon)/3}$ and
$$z-\lambda_- = z_- + \frac{\lambda}{d_H^-} \quad \text{and} \quad z-\lambda_+ = z_+ + \frac{\lambda}{d_H^+}.$$
We consider two cases:
\begin{enumerate}
\item When $\lambda$ verifies: $-\dfrac{\pi}{3} + \dfrac{\varepsilon}{3} \leqslant \arg(\lambda) < \dfrac{2\pi}{3} - \dfrac{2\varepsilon}{3}$. 

From Proposition 13 in \cite{dlmmt2022}, we have
$$ -\frac{\pi}{3} + \dfrac{\varepsilon}{3} \leqslant \arg\left(z_\pm + \frac{\lambda}{d_H^\pm}\right) < \frac{2\pi}{3} - \dfrac{2\varepsilon}{3},$$
and
$$- \frac{\pi}{6} + \dfrac{\varepsilon}{6}\leqslant \arg\left(\sqrt{z - \lambda_\pm}\right) < \frac{\pi}{3} - \dfrac{\varepsilon}{3},$$
hence
$$-\frac{\pi}{2} + \dfrac{\varepsilon}{2}< \arg\left(\frac{\sqrt{z - \lambda_+}}{\sqrt{z - \lambda_-}} \right) = \arg\left(\sqrt{z - \lambda_+}\right) - \arg\left(\sqrt{z - \lambda_-}\right) < \frac{\pi}{2} - \dfrac{\varepsilon}{2}.$$
Since
$$\begin{array}{lll}
\dis \left|\arg\left(\frac{\left(1 - e^{-2\ell\sqrt{z-\lambda_-}}\right) \left(1 + e^{-2L\sqrt{z-\lambda_+}}\right)}{ \left(1 + e^{-2\ell\sqrt{z-\lambda_-}}\right) \left(1 - e^{-2L\sqrt{z-\lambda_+}}\right)}\right) \right| & \leqslant & \dis \left|\arg\left(\frac{\left(1 - e^{-2\ell\sqrt{z-\lambda_-}}\right)}{ \left(1 + e^{-2\ell\sqrt{z-\lambda_-}}\right)} \right)\right| \\ \ecart
&& \dis + \left|\arg\left( \frac{\left(1 - e^{-2L\sqrt{z-\lambda_+}}\right)}{\left(1 + e^{-2L\sqrt{z-\lambda_+}}\right)}\right)\right|,
\end{array}$$
from \refL{Lem estim Zpm} statement 2., we have
$$ \left|\arg\left(\frac{\left(1 - e^{-2\ell\sqrt{z-\lambda_-}}\right) \left(1 + e^{-2L\sqrt{z-\lambda_+}}\right)}{ \left(1 + e^{-2\ell\sqrt{z-\lambda_-}}\right) \left(1 - e^{-2L\sqrt{z-\lambda_+}}\right)}\right) \right| < \frac{2}{3}(\pi - \varepsilon).$$
For all $z \in \pi^2 + S_{(\pi-\varepsilon)/3}$, we note
$$\tilde{f}_\lambda(z) = \frac{\beta_S}{\beta_I} \frac{\sqrt{z-\lambda_+}}{\sqrt{z-\lambda_-}}\frac{\left(1 - e^{-2\ell\sqrt{z-\lambda_-}}\right) \left(1 + e^{-2L\sqrt{z-\lambda_+}}\right)}{ \left(1 + e^{-2\ell\sqrt{z-\lambda_-}}\right) \left(1 - e^{-2L\sqrt{z-\lambda_+}}\right)} = f_\lambda(z) - 1.$$
We deduce, from the previous inequalities, that
$$\left|\arg\left(\tilde{f}_\lambda (z)\right)\right| < \frac{7}{6}(\pi - \varepsilon),$$
and according to \refP{Prop DoreLabbas}, we have
$$\left|f_\lambda(z)\right| \geqslant \left(1 + \left|\tilde{f}_\lambda(z)\right|\right) \left|\cos\left(\frac{\arg\left(\tilde{f}_\lambda(z)\right)}{2}\right)\right| \geqslant \left|\cos\left(\frac{\arg\left(\tilde{f}_\lambda(z)\right)}{2}\right)\right|.$$
Now, assume that 
$$\varepsilon = \frac{\pi}{7} + \frac{6}{7}\, \varepsilon_0,$$
where
$$0 < \varepsilon_0 < \frac{\pi}{8}.$$
It follows that 
$$\left|f_\lambda(z)\right| \geqslant \left|\cos\left(\frac{\arg\left(\tilde{f}_\lambda(z)\right)}{2}\right)\right| > \cos\left(\frac{\pi}{2} - \frac{\varepsilon_0}{2}\right) = \sin\left(\frac{\varepsilon_0}{2}\right) > 0.$$
Therefore $1/f_\lambda$ is bounded and belongs to $H^\infty\left(\pi^2 + S_{(\pi-\varepsilon_0)/3}\right)$.

Finally, using again \refP{Prop inv f}, $f_\lambda(-Q) = \Pi_\lambda$ is boundedly invertible and 
$$\left[f_\lambda(-Q)\right]^{-1} = \frac{1}{f_\lambda}(-Q).$$

\item When $\lambda$ verifies: $-\dfrac{2\pi}{3} + \dfrac{2\varepsilon}{3} < \arg(\lambda) \leqslant \dfrac{\pi}{3} - \dfrac{\varepsilon}{3}$. 

By using an analogous method, we obtain the invertibility of $\Pi_\lambda$.
\end{enumerate}
To conclude, there exists a constant $C>0$ independent of $\lambda$ such that
\begin{equation}\label{majoration norme D_lambda -1}
\left\|D_\lambda^{-1}\right\|_{\L(X)} \leqslant C.
\end{equation}
\end{proof}

We are now in position to solve system \eqref{syst final}. Thanks to \refP{Prop inv Det}, $D_{\lambda }$ is invertible for $\lambda \in S_{4(\pi -\varepsilon _{0})/7}$ and the expressions of constants $\delta _{I}$ and $\gamma _{S}$ are given by
\begin{equation*}
\left\{ 
\begin{array}{lll}
\delta _{I} & = & \dis\beta _{S}\,D_{\lambda }^{-1}\,P_{\lambda
}^{+}(P_{\lambda }^{-})^{-1} \left( I+e^{2LP_{\lambda }^{+}}\right) \left(
R_{S}-R_{I}\right) \\ \ecart
&& \dis + D_{\lambda }^{-1}\,\left( I-e^{2LP_{\lambda}^{+}}\right) \left( \beta _{I}\,R_{I} + \beta _{S}\,P_{\lambda}^{+}(P_{\lambda }^{-})^{-1}R_{S}\right)  \\ 
\\
\gamma _{S} & = & \dis-\beta _{I}\,D_{\lambda }^{-1}\left( I+e^{2\ell
P_{\lambda }^{-}}\right) \left( R_{S}-R_{I}\right) \\ \ecart
&&\dis + D_{\lambda}^{-1}\,\left( I-e^{2\ell P_{\lambda }^{-}}\right) \left( \beta_{I}\,R_{I}+\beta _{S}\,P_{\lambda }^{+}(P_{\lambda }^{-})^{-1}R_{S}\right).
\end{array}
\right. 
\end{equation*}
We now explicit the solution $(h_{I},h_{S})$ by using the fact that operators $D_{\lambda }^{-1}$, $P_{\lambda }^{\pm }$ and $(P_{\lambda }^{\pm})^{-1}$ commute among themselves.

For $x\in (-\ell ,0)$, we have 
\begin{equation*}
\begin{array}{llll}
h_{I}(x) & = & 2\,\beta _{S}\,D_{\lambda }^{-1}\,P_{\lambda}^{+}(P_{\lambda}^{-})^{-1}e^{-xP_{\lambda }^{-}}\left( I-e^{2(x+\ell)P_{\lambda }^{-}}\right) R_{S} &  \\ \ecart 
&  & -\beta _{S}\,D_{\lambda }^{-1}\,P_{\lambda }^{+}(P_{\lambda}^{-})^{-1}e^{-xP_{\lambda }^{-}}\left( I-e^{2(x+\ell )P_{\lambda}^{-}}\right) \left( I+e^{2LP_{\lambda }^{+}}\right) R_{I} &  \\ \ecart 
&  & +\beta_{I}\,D_{\lambda }^{-1}e^{-xP_{\lambda }^{-}}\left(I-e^{2(x+\ell )P_{\lambda }^{-}}\right) \left( I-e^{2LP_{\lambda}^{+}}\right) R_{I} &  \\ \ecart 
&  & -e^{(x+\ell )P_{\lambda }^{-}}w_{I}(g_{I})(-\ell ) &  \\ \ecart 
&  & +w_{I}(g_{I})(x),  
\end{array}
\end{equation*}%
where 
\begin{equation*}
w_{I}(g_{I})(x)=\frac{1}{2}\int_{-\ell }^{x}e^{(x-t)P_{\lambda
}^{-}}(P_{\lambda }^{-})^{-1}g_{I}(t)~dt+\frac{1}{2}\int_{x}^{0}e^{(t-x)P_{%
\lambda }^{-}}(P_{\lambda }^{-})^{-1}g_{I}(t)~dt;
\end{equation*}%
and for $x\in (0,L)$, we have 
\begin{equation*}
\begin{array}{llll}
h_{S}(x) & = & 2\,\beta _{I}\,D_{\lambda }^{-1}e^{xP_{\lambda }^{+}}\left(
I-e^{2(L-x)P_{\lambda }^{+}}\right) R_{I} &  \\ \ecart 
&  & -\beta _{I}\,D_{\lambda }^{-1}e^{xP_{\lambda }^{+}}\left(I-e^{2(L-x)P_{\lambda }^{+}}\right) \left( I+e^{2\ell P_{\lambda}^{-}}\right) R_{S} &  \\ \ecart 
&  & +\beta _{S}\,D_{\lambda }^{-1}\,P_{\lambda }^{+}(P_{\lambda
}^{-})^{-1}e^{xP_{\lambda }^{+}}\left( I-e^{2(L-x)P_{\lambda }^{+}}\right)
\left( I-e^{2\ell P_{\lambda }^{-}}\right) R_{S} &  \\ 
\ecart &  & -e^{(L-x)P_{\lambda }^{+}}w_{S}(g_{S})(L) &  \\ 
\ecart &  & +w_{S}(g_{S})(x), & 
\end{array}%
\end{equation*}%
where 
\begin{equation*}
w_{S}(g_{S})(x)=\frac{1}{2}\int_{0}^{x}e^{(x-t)P_{\lambda }^{+}}(P_{\lambda
}^{+})^{-1}g_{S}(t)~dt+\frac{1}{2}\int_{x}^{L}e^{(t-x)P_{\lambda
}^{+}}(P_{\lambda }^{+})^{-1}g_{S}(t)~dt,
\end{equation*}% 
\begin{equation*}
R_{I}=w_{I}(g_{I})(0)-e^{\ell P_{\lambda }^{-}}w_{I}(g_{I})(-\ell )=\frac{1}{%
2}\int_{-\ell }^{0}e^{-tP_{\lambda }^{-}}\left( I-e^{2(t+\ell )P_{\lambda
}^{-}}\right) (P_{\lambda }^{-})^{-1}g_{I}(t)~dt,
\end{equation*}%
and 
\begin{equation*}
R_{S}=w_{S}(g_{S})(0)-e^{LP_{\lambda }^{+}}w_{S}(g_{S})(L)=\frac{1}{2}%
\int_{0}^{L}e^{tP_{\lambda }^{+}}\left( I-e^{2(L-t)P_{\lambda }^{+}}\right)
(P_{\lambda }^{+})^{-1}g_{S}(t)~dt.
\end{equation*}

\begin{Rem}
Note that in the representation formula of the solution $h_{I}$, the first three terms express, in particular, the effect of the transmission conditions
between the two habitats; the fourth term expresses the effect of a boundary
condition in $-\ell $ and the fifth term, obviously, expresses the effect of
the direct and retrograde evolution inside the domain $\Omega _{I}$.

The same comment is valid for the representation formula of $h_{S}$.
\end{Rem}

\subsection{Maximal regularity of the solution}

From \refP{Prop Qpm}, for all $\eta >0$, operators $-Q$ and $-Q^{\pm }$ are sectorial of angle $\eta $. On the other hand, by Proposition 3.1, p. 191 in \cite{labbas-moussaoui2000}, we have $-Q\in $ BIP$\,(X,\eta )$. Moreover, due to \eqref{hyp const} and Theorem~2.3, p. 69 in \cite{arendt-bu-haase2001}, we deduce that $-Q^{\pm }\in $ BIP$\,(X,\eta )$. Thus, thanks to Theorem 2.4, p. 408 in \cite{monniaux1997}, we obtain that 
\begin{equation*}
-Q^{\pm }+\frac{\lambda }{d_{H}^{\pm }}\in \text{BIP}(X,\theta ),
\end{equation*}%
where 
\begin{equation*}
\theta =\max \left( \eta ,|\arg (\lambda )|\right) =|\arg (\lambda )|<\frac{4(\pi -\varepsilon _{0})}{7}.
\end{equation*}%
Finally, due to Proposition 3.2.1, e), p. 71 in \cite{haase2006}, we deduce that 
\begin{equation*}
-P_{\lambda }^{\pm }\in \text{BIP}(X,2(\pi -\varepsilon _{0})/7).
\end{equation*}
Recall that for all fixed $c>0$, we have 
\begin{equation}
\forall \,\psi \in X,\quad e^{cP_{\lambda }^{\pm }}\psi \in D((P_{\lambda
}^{\pm })^{\infty })=\bigcap_{k\geqslant 0}D((P_{\lambda }^{\pm })^{k})=D(Q^{\infty
}).
\label{exp(cG) et exp(cH)}
\end{equation}%
For the maximal regularity of $h_{I}$, we have to show that 
\begin{equation*}
h_{I}\in W^{2,p}(-\ell ,0;X)\cap L^{p}\left(-\ell ,0;D\left((P_{\lambda}^{-})^{2}\right)\right).
\end{equation*}
It suffices to show, for instance, that 
\begin{equation*}
h_{I}\in L^{p}\left(-\ell ,0;D\left((P_{\lambda }^{-})^{2}\right)\right).
\end{equation*}%
We recall that, for almost every $x\in (-\ell ,0)$, we have 
\begin{equation*}
\begin{array}{llll}
h_{I}(x) & = & 2\,\beta _{S}\,D_{\lambda }^{-1}\,P_{\lambda}^{+}(P_{\lambda }^{-})^{-1}e^{-xP_{\lambda }^{-}}\left( I-e^{2(x+\ell)P_{\lambda }^{-}}\right) R_{S} &  \\ \ecart 
&  & -\beta _{S}\,D_{\lambda }^{-1}\,P_{\lambda }^{+}(P_{\lambda}^{-})^{-1}e^{-xP_{\lambda }^{-}}\left( I-e^{2(x+\ell )P_{\lambda}^{-}}\right) \left( I+e^{2LP_{\lambda }^{+}}\right) R_{I} &  \\ \ecart 
&  & +\beta _{I}\,D_{\lambda }^{-1}e^{-xP_{\lambda }^{-}}\left(I-e^{2(x+\ell )P_{\lambda }^{-}}\right) \left( I-e^{2LP_{\lambda}^{+}}\right) R_{I} & \\ \ecart 
&  & -e^{(x+\ell )P_{\lambda }^{-}}w_{I}(g_{I})(-\ell )  \\ \ecart 
&  & +w_{I}(g_{I})(x). 
\end{array}
\end{equation*}
The last term is directly treated using \refT{Th DorVen}, thus 
\begin{equation*}
w_{I}(g_{I})\in L^{p}\left(-\ell ,0;D\left((P_{\lambda }^{-})^{2}\right)\right).
\end{equation*}%
Moreover, we have 
\begin{equation*}
\begin{array}{lll}
\dis e^{(x+\ell )P_{\lambda }^{-}}w_{I}(g_{I})(-\ell ) & = & \dis\frac{1}{2}%
e^{(x+\ell )P_{\lambda }^{-}}\int_{-\ell }^{0}e^{(s+\ell )P_{\lambda
}^{-}}(P_{\lambda }^{-})^{-1}g(s)~ds \\ \\
& = & \dis\frac{1}{2}(P_{\lambda }^{-})^{-1}\int_{-\ell
}^{x}e^{(x-s)P_{\lambda }^{-}}e^{2(s+\ell )P_{\lambda }^{-}}g(s)~ds \\ 
\ecart 
&& \dis+\frac{1}{2}(P_{\lambda }^{-})^{-1}e^{2(x+\ell )P_{\lambda
}^{-}}\int_{x}^{0}e^{(s-x)P_{\lambda }^{-}}g(s)~ds.%
\end{array}%
\end{equation*}%
Since 
\begin{equation*}
s\longmapsto e^{2(s+\ell )P_{\lambda }^{-}}g(s)\in L^{p}(-\ell ,0;X),
\end{equation*}%
using again \refT{Th DorVen},  we obtain that 
\begin{equation*}
x\longmapsto e^{(x+\ell )P_{\lambda }^{-}}w_{I}(g_{I})(-\ell )\in
L^{p}\left(-\ell ,0;D\left((P_{\lambda }^{-})^{2}\right)\right).
\end{equation*}
Let us, for instance, analyze the following first term, since all the other terms can be treated similarly:
\begin{equation*}
\begin{array}{l}
2\,\beta_{S}\,D_{\lambda }^{-1}\,P_{\lambda }^{+}(P_{\lambda
}^{-})^{-1}e^{-xP_{\lambda }^{-}}\left( I-e^{2(x+\ell )P_{\lambda
}^{-}}\right) R_{S} \\ \ecart

=2\,\beta _{S}\,D_{\lambda }^{-1}\,P_{\lambda }^{+}(P_{\lambda
}^{-})^{-1}e^{-xP_{\lambda }^{-}}\left( I-e^{2(x+\ell )P_{\lambda
}^{-}}\right) w_{S}(g_{S})(0) \\ \ecart

\quad -2\,\beta _{S}\,D_{\lambda }^{-1}\,P_{\lambda }^{+}(P_{\lambda
}^{-})^{-1}e^{-xP_{\lambda }^{-}}\left( I-e^{2(x+\ell )P_{\lambda
}^{-}}\right) e^{LP_{\lambda }^{+}}w_{S}(g_{S})(L).%
\end{array}%
\end{equation*}%
From \eqref{exp(cG) et exp(cH)}, we just have to study 
\begin{equation*}
\begin{array}{l}
\left( P_{\lambda }^{-}\right) ^{2}D_{\lambda }^{-1}\,P_{\lambda
}^{+}(P_{\lambda }^{-})^{-1}e^{-xP_{\lambda }^{-}}\left( I-e^{2(x+\ell
)P_{\lambda }^{-}}\right) w_{S}(g_{S})(0) \\ 
\ecart=\dis\frac{1}{2}\,D_{\lambda }^{-1}\left( I-e^{2(x+\ell )P_{\lambda
}^{-}}\right) P_{\lambda }^{-}e^{-xP_{\lambda
}^{-}}\int_{0}^{L}e^{tP_{\lambda }^{+}}g_{S}(t)~dt.%
\end{array}%
\end{equation*}%
According to Lemma 18, p. 19 in \cite{dlmmt2022}, we have 
\begin{equation}
\left\{ 
\begin{array}{ll}
\dis\int_{0}^{L}e^{tP_{\lambda }^{+}}g_{S}(t)~dt & \in \left( D(P_{\lambda
}^{+}),X\right) _{\frac{1}{p},p} \\ 
\ecart\dis\int_{0}^{L}e^{(L-t)P_{\lambda }^{+}}g_{S}(t)~dt & \in \left(
D(P_{\lambda }^{+}),X\right) _{\frac{1}{p},p} \\ 
\ecart\dis\int_{-\ell }^{0}e^{-tP_{\lambda }^{-}}g_{I}(t)~dt & \in \left(
D(P_{\lambda }^{-}),X\right) _{\frac{1}{p},p} \\ 
\ecart\dis\int_{-\ell }^{0}e^{(t+\ell )P_{\lambda }^{-}}g_{I}(t)~dt & \in
\left( D(P_{\lambda }^{-}),X\right) _{\frac{1}{p},p}.%
\end{array}%
\right. 
\end{equation}%
Since $D(P_{\lambda }^{+})=D(P_{\lambda }^{-})=D(\sqrt{-Q})$, it follows that 
\begin{equation*}
\left( D(P_{\lambda }^{+}),X\right) _{\frac{1}{p},p}=\left( D(P_{\lambda
}^{-}),X\right) _{\frac{1}{p},p}=\left( D\left(\sqrt{-Q}\right),X\right) _{\frac{1}{p}%
,p}=\left( D(Q),X\right) _{\frac{1}{2}+\frac{1}{2p},p}.
\end{equation*}
Thus, due to \eqref{Reg triebel}, for $n=1$, we deduce that 
\begin{equation*}
x\longmapsto P_{\lambda }^{-}e^{-xP_{\lambda
}^{-}}\int_{0}^{L}e^{tP_{\lambda }^{+}}g_{S}(t)~dt\in L^{p}(-\ell ,0;X);
\end{equation*}
hence
\begin{equation*}
h_{I}\in L^{p}\left(-\ell ,0;D\left((P_{\lambda }^{-})^{2}\right)\right).
\end{equation*}
In the same way, we obtain that 
\begin{equation*}
h_{S}\in W^{2,p}(0,L;X)\cap L^{p}\left(0,L;D\left((P_{\lambda }^{+})^{2}\right)\right).
\end{equation*}

\section{Estimation of the norm of the resolvent operator}

\begin{Lem}
\label{Lem LMT} Let $T$ be a linear operator such that $-T\in $ BIP$(X,\theta
_{T})$, with $\theta _{T}\in \lbrack 0,\pi /2[$ and $0\in \rho (T)$. Let $%
g\in L^{p}(a,b;X)$ with $1<p<+\infty $ and $a<b$. For all $\theta \in
\;]0,\pi -\theta _{T}[$, $\mu \in \overline{S_{\theta }}\subset \rho (-T)$ and $x\in \lbrack a,b]$, we set
\begin{equation*}
I_{\mu ,g}(x)=\int_{a}^{x}e^{-(x-s)\sqrt{-T+\mu I}}g(s)~ds\quad \text{and}%
\quad J_{\mu ,g}(x)=\int_{x}^{b}e^{-(s-x)\sqrt{-T+\mu I}}g(s)~ds.
\end{equation*}%
Then, we have 
\begin{equation*}
\left\Vert I_{\mu ,f}\right\Vert _{L^{p}(a,b;X)}\leqslant \frac{C}{\sqrt{%
1+|\mu |}}\,\left\Vert f\right\Vert _{L^{p}(a,b;X)}\quad \text{and}\quad
\left\Vert J_{\mu ,f}\right\Vert _{L^{p}(a,b;X)}\leqslant \frac{C}{\sqrt{%
1+|\mu |}}\,\left\Vert f\right\Vert _{L^{p}(a,b;X)},
\end{equation*}%
where $C>0$ is independent of $g$ and $\mu $.
\end{Lem}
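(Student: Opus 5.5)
The plan is to write $A_\mu:=-\sqrt{-T+\mu I}$, so that $I_{\mu,g}(x)=\int_a^x e^{(x-s)A_\mu}g(s)\,ds$ is a convolution against the semigroup generated by $A_\mu$. The goal is a uniform estimate of the form
\[
\|e^{tA_\mu}\|_{\mathcal L(X)}\leqslant C\,e^{-c\,t\sqrt{1+|\mu|}},\qquad t\geqslant 0,
\]
with $C,c>0$ independent of $\mu\in\overline{S_\theta}$. Once this bound holds, Young's inequality for convolutions on $(a,b)$ (or Minkowski's integral inequality) gives
\[
\|I_{\mu,g}\|_{L^p(a,b;X)}\leqslant \Big(\int_0^{+\infty} C e^{-ct\sqrt{1+|\mu|}}\,dt\Big)\|g\|_{L^p(a,b;X)}=\frac{C}{c\sqrt{1+|\mu|}}\|g\|_{L^p(a,b;X)}.
\]
The estimate for $J_{\mu,g}$ is identical after the change of variable $x\mapsto a+b-x$, which turns $J$ into a convolution of the same type.

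To obtain the semigroup bound, I will proceed in three steps.

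\textbf{Step 1: sectoriality of $-T+\mu I$.} Since $-T\in$ BIP$(X,\theta_T)$, it is in particular sectorial of angle $\theta_T$. For $\mu\in\overline{S_\theta}$ with $\theta<\pi-\theta_T$, I will use \refP{Prop DoreLabbas} exactly as in \refL{Lem estim Zpm}. Together with $0\in\rho(T)$, which gives a ball around $0$ in $\rho(-T)$, this shows that $-T+\mu I$ is sectorial of some angle $\omega<\pi$, with constants uniform in $\mu$, and spectrum contained in $\{|w|\geqslant c_0(1+|\mu|)\}\cap\overline{S_\omega}$.

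\textbf{Step 2: resolvent bound for the square root.} Taking the square root, $\sqrt{-T+\mu I}$ is sectorial of angle $\omega/2<\pi/2$ by \refP{Prop sect}. The key quantitative claim is the analogue of \eqref{estimation de la resolvante de P pm}:
\[
\|(A_\mu-zI)^{-1}\|\leqslant \frac{C}{\sqrt{1+|\mu|}+|z|}
\quad\text{for } |\arg z|\leqslant \frac{\pi}{2}+\varepsilon',
\]
uniformly in $\mu$. I will obtain it either by invoking Lemma 4.2 of \cite{flmt2021} as was done for $P_\lambda^\pm$, or directly from the Balakrishnan-type formula. With this bound in hand, the Dunford integral
\[
e^{tA_\mu}=\frac{1}{2i\pi}\int_\gamma e^{tz}(zI-A_\mu)^{-1}\,dz
\]
can be taken along a contour $\gamma$ shifted to $\operatorname{Re} z=-c\sqrt{1+|\mu|}$, opening with angle $\pi/2+\varepsilon'$. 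This yields the exponential decay with uniform constants: after the scaling $z=\sqrt{1+|\mu|}\,\zeta$ one gets $\|e^{tA_\mu}\|\leqslant Ce^{-ct\sqrt{1+|\mu|}}$.

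\textbf{Step 3: conclude.} Plug the semigroup bound into the convolution estimate above to get the stated inequalities for $I_{\mu,g}$ and $J_{\mu,g}$.

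The main obstacle is the uniformity in $\mu$ of the spectral gap and of the sector in Step 2. Near the boundary of $\overline{S_\theta}$, the angle of $-T+\mu I$ approaches $\max(\theta_T,\theta)<\pi$, so the square root stays in a sector of angle strictly less than $\pi/2$. The difficulty is showing that the distance of its spectrum from $i\RR$ grows like $\sqrt{1+|\mu|}$. I would handle this by the cosine estimate of \refP{Prop DoreLabbas}: it gives $|w+\mu|\geqslant c(|w|+|\mu|)$ for $w\in\sigma(-T)$, hence $\operatorname{Re}\sqrt{w+\mu}\geqslant c'\sqrt{|w|+|\mu|}\geqslant c''\sqrt{1+|\mu|}$, using $0\in\rho(T)$ to get $|w|\geqslant\delta$. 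The BIP hypothesis itself is not needed for this $L^p$ bound; only sectoriality is used.
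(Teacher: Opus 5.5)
Your argument is correct. It differs from the paper in that the paper gives no argument of its own: it simply defers to Lemma 4.6 of \cite{flmt2021} and Lemma 4.11 of \cite{lmt2022}.

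You instead prove the estimate directly, in two moves:
\begin{itemize}
\item a uniform decay bound $\|e^{-t\sqrt{-T+\mu I}}\|_{\L(X)}\leqslant Ce^{-ct\sqrt{1+|\mu|}}$, which is essentially the Dore--Yakubov estimate the paper quotes in \eqref{estim DY} with $\alpha=0$, so you could cite it rather than rebuild it in Steps 1--2;
\item Young's inequality for an operator-valued kernel whose $L^1(0,+\infty)$-norm is $O((1+|\mu|)^{-1/2})$.
\end{itemize}
This makes explicit that only sectoriality of $-T$ and $0\in\rho(T)$ are used; BIP and the UMD property of $X$ play no role.

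What BIP buys, via \refT{Th DorVen}, is the stronger estimate $\|\sqrt{-T+\mu I}\,I_{\mu,g}\|_{L^p(a,b;X)}\leqslant C\|g\|_{L^p(a,b;X)}$, uniformly in $\mu$. The lemma would follow from that by composing with $(-T+\mu I)^{-1/2}$. Your convolution approach cannot reach this stronger bound, since $\|\sqrt{-T+\mu I}\,e^{-t\sqrt{-T+\mu I}}\|$ behaves like $1/t$ near $t=0$, but the lemma as stated does not need it.

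One detail in Step 2 should be tightened. The resolvent bound you quote holds only for $|\arg z|\leqslant \pi/2+\varepsilon'$, while a contour through $\operatorname{Re} z=-c\sqrt{1+|\mu|}$ lies in the left half-plane. You therefore first have to extend the bound to a slightly larger sector together with a ball of radius $c\sqrt{1+|\mu|}$. This follows from a Neumann series around $0$, using $\|A_\mu^{-1}\|\leqslant C/\sqrt{1+|\mu|}$, and around the boundary rays.

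Moreover, a $t$-independent contour only yields $\|e^{tA_\mu}\|\leqslant Ce^{-ct\sqrt{1+|\mu|}}\bigl(1+\log^+\frac{1}{t\sqrt{1+|\mu|}}\bigr)$, not a pure exponential. Neither point is serious. The kernel still has $L^1$-norm $O((1+|\mu|)^{-1/2})$, which is all Young's inequality needs. The clean exponential bound also follows by applying the standard generation estimates to the uniformly sectorial family $A_\mu/\sqrt{1+|\mu|}$.
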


This result is a consequence of Lemma 4.6 in \cite{flmt2021} and Lemma 4.11 in \cite{lmt2022}.

\begin{Lem}
\label{Lem LMT 2} Let $g\in L^{p}(a,b;X)$, $1<p<+\infty $ and $T$ be a linear operator such that $-T\in $ BIP$(X,\theta_{T})$, where $\theta _{T}\in \lbrack 0,\pi /2[$ and $0\in \rho (T)$. Let $\theta \in \;]0,\pi -\theta _{T}[$ fixed. Then, there exists $C>0$, such that, for all $\eta ,\mu \in \overline{S_{\theta }}\subset \rho (-T)$,  we have

\begin{enumerate}
\item $\dis \left\|e^{-(.-a)\sqrt{-T + \eta I}} \int_a^b e^{-(s-a)\sqrt{%
-T+\mu I}} g(s)~ ds\right\|_{L^p(a,b;X)} \leqslant \left(\frac{C}{\sqrt{%
1+|\mu|}} + \frac{C}{\sqrt{1+|\eta|}} \right)\left\|g\right\|_{L^p(a,b;X)},$

\item $\dis \left\|e^{-(.-a)\sqrt{-T + \eta I}} \int_a^b e^{-(b-s)\sqrt{%
-T+\mu I}} g(s)~ ds \right\|_{L^p(a,b;X)} \leqslant \left(\frac{C}{\sqrt{%
1+|\mu|}} + \frac{C}{\sqrt{1+|\eta|}} \right) \left\|g\right\|_{L^p(a,b;X)},$

\item $\dis \left\|e^{-(b-.)\sqrt{-T + \eta I}} \int_a^b e^{-(b-s)\sqrt{%
-T+\mu I}} g(s)~ ds\right\|_{L^p(a,b;X)} \leqslant \left(\frac{C}{\sqrt{%
1+|\mu|}} + \frac{C}{\sqrt{1+|\eta|}} \right) \left\|g\right\|_{L^p(a,b;X)},$

\item $\dis \left\|e^{-(b-.)\sqrt{-T + \eta I}} \int_a^b e^{-(s-a)\sqrt{%
-T+\mu I}} g(s)~ ds\right\|_{L^p(a,b;X)} \leqslant \left(\frac{C}{\sqrt{%
1+|\mu|}} + \frac{C}{\sqrt{1+|\eta|}} \right)\left\|g\right\|_{L^p(a,b;X)}.$
\end{enumerate}
\end{Lem}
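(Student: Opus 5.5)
The plan is to bound each of the four quantities as a product of two scalar factors: the norm of a fixed vector $v\in X$, and the $L^p$ norm of the orbit $x\mapsto e^{-(x-a)\sqrt{-T+\eta I}}v$ (or of its reflected version). The two factors will then be combined through Young's inequality. Throughout, write $B_\mu=\sqrt{-T+\mu I}$. Items 2, 3 and 4 reduce to item 1 by the changes of variables $x\mapsto a+b-x$ and/or $s\mapsto a+b-s$. These maps are isometries of $L^p(a,b;X)$ and turn $e^{-(b-\cdot)B_\eta}$ into $e^{-(\cdot-a)B_\eta}$ and $e^{-(b-s)B_\mu}$ into $e^{-(s-a)B_\mu}$. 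So I only treat item 1.

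\textbf{Key step (uniform exponential decay).} I would prove that there exist $C,c>0$ such that
\begin{equation*}
\forall\,\mu\in\overline{S_\theta},\ \forall\,t\geqslant 0,\quad \left\|e^{-tB_\mu}\right\|_{\L(X)}\leqslant C\,e^{-ct\sqrt{1+|\mu|}}.
\end{equation*}
This is the step I expect to be the main obstacle, because the constants must be uniform in $\mu$. The strategy has three parts.
\begin{enumerate}
\item Since $-T$ is sectorial of angle $\theta_T$ and $\theta+\theta_T<\pi$, the operator $-T+\mu I$ is sectorial of angle at most $\max(\theta_T,\theta)<\pi$. Moreover, since $0\in\rho(T)$, the distance from $0$ to its spectrum is comparable to $1+|\mu|$, by the argument based on \refP{Prop DoreLabbas} used for the operators $Q^\pm_\lambda$.
\item Taking the square root halves the angle, as in \refP{Prop sect}. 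This yields the analogue of estimate \eqref{estimation de la resolvante de P pm} (Lemma 4.2 of \cite{flmt2021}):
\begin{equation*}
\left\|(B_\mu+zI)^{-1}\right\|_{\L(X)}\leqslant \frac{C}{\sqrt{1+|\mu|}+|z|}\quad\text{for } |\arg z|\leqslant \frac{\pi}{2}+\varepsilon',
\end{equation*}
for some $\varepsilon'>0$, with $C$ independent of $\mu$.
\item Represent $e^{-tB_\mu}$ by the Dunford integral of $e^{-tz}(zI-B_\mu)^{-1}$ over the boundary of the shifted sector $c\sqrt{1+|\mu|}+S_{\pi/2-\varepsilon'/2}$, with $c$ small. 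The resolvent bound above holds on this contour uniformly in $\mu$. Factoring out $e^{-ct\sqrt{1+|\mu|}}$ and estimating the remaining integral as in the classical proof of analyticity then gives the claimed decay. For small $t$ the bound reduces to the uniform boundedness of the semigroups.
\end{enumerate}

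\textbf{Conclusion.} Set $v=\int_a^b e^{-(s-a)B_\mu}g(s)\,ds$. By the decay estimate and Hölder's inequality, with $1/p+1/p'=1$,
\begin{equation*}
\|v\|_X\leqslant C\,\big\|e^{-c(\cdot-a)\sqrt{1+|\mu|}}\big\|_{L^{p'}(a,b)}\|g\|_{L^p(a,b;X)}\leqslant C\,(1+|\mu|)^{-\frac{1}{2p'}}\|g\|_{L^p(a,b;X)}.
\end{equation*}
Similarly, $\big\|e^{-(\cdot-a)B_\eta}v\big\|_{L^p(a,b;X)}\leqslant C(1+|\eta|)^{-\frac{1}{2p}}\|v\|_X$. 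Multiplying the two bounds and applying Young's inequality $\alpha^{1/p}\beta^{1/p'}\leqslant \alpha/p+\beta/p'\leqslant\alpha+\beta$ with $\alpha=(1+|\eta|)^{-1/2}$ and $\beta=(1+|\mu|)^{-1/2}$ gives item 1:
\begin{equation*}
\Big(\frac{C}{\sqrt{1+|\mu|}}+\frac{C}{\sqrt{1+|\eta|}}\Big)\|g\|_{L^p(a,b;X)}.
\end{equation*}
This argument uses only the decay estimate; the BIP hypothesis is not needed beyond the sectoriality of $-T$ it contains.
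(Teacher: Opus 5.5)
Your proof is correct, but it does not follow the paper, which gives no argument of its own. The statement is imported from Lemma~4.12 of \cite{lmt2022}, where it comes right after the convolution bound quoted here as \refL{Lem LMT}. The additive right-hand side is exactly what one gets by splitting $\int_a^b$ at $s=x$. In your notation $B_\mu=\sqrt{-T+\mu I}$, one writes $e^{-(x-a)B_\eta}e^{-(s-a)B_\mu}=e^{-(x-s)B_\eta}\bigl[e^{-(s-a)B_\eta}e^{-(s-a)B_\mu}\bigr]$ for $s<x$, and $\bigl[e^{-(x-a)B_\eta}e^{-(x-a)B_\mu}\bigr]e^{-(s-x)B_\mu}$ for $s>x$. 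The bracketed operators are uniformly bounded, so \refL{Lem LMT} applied to each piece produces precisely the terms $C/\sqrt{1+|\eta|}$ and $C/\sqrt{1+|\mu|}$.

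You instead use that the operator factors through $X$ via the single vector $v$, and then combine Hölder (exponents $p'$ and $p$) with weighted AM--GM. This is self-contained and slightly stronger. You get the product bound $C(1+|\eta|)^{-1/(2p)}(1+|\mu|)^{-1/(2p')}\|g\|_{L^p(a,b;X)}$, with $C$ independent of $g$ and of $b-a$. You also make visible that BIP plays no role: only uniform sectoriality of $-T+\mu I$ is used, since the spectral angle of $-T$ is at most its power angle $\theta_T$.

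The price is that you must establish $\|e^{-tB_\mu}\|\leqslant Ce^{-ct\sqrt{1+|\mu|}}$ for all $t\geqslant 0$, not just at a fixed $t_0$ as in \eqref{estim DY}; \refL{Lem LMT} itself rests on essentially this estimate anyway. Your sketch of that step is sound. The one point to make explicit is the vertex of the contour. Near $c\sqrt{1+|\mu|}$ the shifted contour leaves the sector where your resolvent bound for $B_\mu$ is stated. There you need the Neumann-series extension of $\|B_\mu^{-1}\|\leqslant C/\sqrt{1+|\mu|}$ to the disc $|z|\leqslant\sqrt{1+|\mu|}/(2C)$, and this is what forces $c\leqslant \tan(\varepsilon'/2)/(2C)$. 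Equivalently, check that $B_\mu-c\sqrt{1+|\mu|}\,I$ is uniformly sectorial of angle less than $\pi/2$; this also disposes of small $t$.
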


This Lemma is proved in \cite{lmt2022}, Lemma 4.12, p. 33.

\begin{Rem}
In our case, $T=Q^{+}$ or $T=Q^{-}$; likewise $\eta $ and $\mu $ will be
replaced by $\lambda /d_{H}^{-}$ and $\lambda /d_{H}^{+}$.
\end{Rem}

According to \eqref{estimation Q_i}, there exists $C>0$, such that for all $\lambda \in S_{12(\pi -\varepsilon _{0})/21}$
\begin{equation}
\left\Vert (P_{\lambda }^{-})^{-1}\right\Vert _{\L (X)}\leqslant \frac{C}{%
\sqrt{1+|\lambda |}}\quad \text{and}\quad \left\Vert (P_{\lambda
}^{+})^{-1}\right\Vert _{\L (X)}\leqslant \frac{C}{\sqrt{1+|\lambda |}}.
\label{estim G-1 et H-1}
\end{equation}%
Moreover, for $\alpha \in \RR$ and $t_{0}>0$ fixed, due to \cite{dore_yakubov2000}, Lemma~2.6, (b), p. 104, there exist $K,C,\omega >0$, such that 
\begin{equation}
\left\Vert (-P_{\lambda }^{\pm })^{\alpha }\,e^{t_{0}P_{\lambda }^{\pm
}}\right\Vert _{\L (X)}\leqslant Ke^{-t_{0}\omega \sqrt{|\lambda
|/d_{H}^{\pm }}}\leqslant Ke^{-C\sqrt{|\lambda |}}.  \label{estim DY}
\end{equation}

\begin{Lem}\label{LemGH-1etHG-1} 
There exists $C>0$ such that for all $\lambda \in
S_{4(\pi -\varepsilon _{0})/7}$ 
\begin{equation*}
\Vert P_{\lambda }^{-}(P_{\lambda }^{+})^{-1}\Vert _{\L (X)}\leqslant C\quad 
\text{and}\quad \Vert P_{\lambda }^{+}(P_{\lambda }^{-})^{-1}\Vert _{\L %
(X)}\leqslant C.
\end{equation*}
\end{Lem}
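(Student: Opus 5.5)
The plan is to express both operators through the functional calculus of $-Q$ and to reduce the two norm bounds to uniform bounds on scalar functions. Recall that $P_{\lambda}^{\pm}=-\left(-Q-\lambda_{\pm}I\right)^{1/2}$, so formally
\begin{equation*}
P_{\lambda}^{-}(P_{\lambda}^{+})^{-1}=g_{\lambda}(-Q),\qquad g_{\lambda}(z)=\frac{\sqrt{z-\lambda_{-}}}{\sqrt{z-\lambda_{+}}},
\end{equation*}
and $P_{\lambda}^{+}(P_{\lambda}^{-})^{-1}=(1/g_{\lambda})(-Q)$. Since $-Q\in$ BIP$(X,\eta)$ for every $\eta>0$ (as recalled in the maximal regularity subsection), $-Q$ has a bounded $H^{\infty}$-calculus on every sector, with constants independent of the function. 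It therefore suffices to show that $g_{\lambda}$ and $1/g_{\lambda}$ are holomorphic and bounded on $\pi^{2}+S_{(\pi-\varepsilon)/3}$, uniformly in $\lambda\in S_{4(\pi-\varepsilon_0)/7}$. The spectrum of $-Q$ is $\{k^2\pi^2\}_{k\ge1}\subset[\pi^2,+\infty)$, so we may shift by $\pi^2$. Alternatively, one can write $-Q=\pi^2+(-Q-\pi^2)$, where the second operator is sectorial of angle $0$ but not injective; to avoid this, work on $\pi^{2}-\delta+S_\omega$, as the paper does with $f_\lambda$.

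\textbf{Holomorphy.} By Lemma~\ref{Lem estim Zpm} (statement 1), $|z-\lambda_{\pm}|$ is bounded below by a positive constant independent of $z$ and $\lambda$. Also $z-\lambda_{\pm}\in S_{\pi}$ (indeed, in a sector of angle at most $\pi-\varepsilon$), so the principal square roots are holomorphic and nonvanishing. Hence $g_\lambda$ and $1/g_\lambda$ are holomorphic.

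\textbf{Uniform boundedness.} This is the main step. Write
\begin{equation*}
\frac{z-\lambda_{-}}{z-\lambda_{+}}=\frac{z+c_{-}+\lambda/d_{H}^{-}}{z+c_{+}+\lambda/d_{H}^{+}},\qquad c_{\pm}=\frac{1-\sigma_H^{\pm}(\cdots)}{d_H^{\pm}}.
\end{equation*}
For the upper bound, use Proposition~\ref{Prop DoreLabbas} with $z_{+}=z+c_{+}$ (which lies in $S_{(\pi-\varepsilon)/3}$, as shown in the proof of Lemma~\ref{Lem estim Zpm}) and $\lambda/d_{H}^{+}$. The angle between them is less than $\pi-\varepsilon$, which gives
\begin{equation*}
|z-\lambda_{+}|\ge \sin(\varepsilon/2)\bigl(|z_{+}|+|\lambda|/d_{H}^{+}\bigr).
\end{equation*}
The triangle inequality gives $|z-\lambda_{-}|\le |z|+|c_{-}|+|\lambda|/d_{H}^{-}$. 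Since $|z_{+}|\ge c\,(1+|z|)$ on $\pi^2+S_{(\pi-\varepsilon)/3}$ (because $|z_+|$ is bounded below and $|z_+|\ge |z|-|c_+|$), the ratio is bounded by a constant depending only on $\varepsilon$, $d_{H}^{\pm}$, $c_{\pm}$ and $r_0$. The same argument with $+$ and $-$ exchanged bounds $1/g_\lambda$. Taking square roots preserves the bounds.

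\textbf{Conclusion and main obstacle.} The obstacle is to make the $H^\infty$-calculus bound genuinely uniform in $\lambda$ and to justify the identity $g_\lambda(-Q)=P_\lambda^-(P_\lambda^+)^{-1}$. For the identity, I would use the multiplicativity of the functional calculus: $\sqrt{z-\lambda_-}$ and $(\sqrt{z-\lambda_+})^{-1}$ define $-P_\lambda^-$ and $-(P_\lambda^+)^{-1}$ respectively, with the latter bounded. For the uniform bound, the $H^\infty$ estimate $\|f(-Q)\|\le C_\omega\|f\|_\infty$ holds with $C_\omega$ independent of $f$, and the angle $\omega=(\pi-\varepsilon)/3$ is fixed once $\varepsilon$ is fixed in terms of $\varepsilon_0$. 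Hence $\|P_\lambda^{-}(P_\lambda^{+})^{-1}\|\le C_\omega\sup|g_\lambda|\le C$, and the same holds for the other operator.
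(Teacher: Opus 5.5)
Your argument is correct in substance, but it follows a different route from the paper.

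\textbf{The paper's route.} It stays at the operator level. It writes $P_\lambda^-(P_\lambda^+)^{-1}=(P_\lambda^-)^2(P_\lambda^-)^{-1}(P_\lambda^+)^{-1}$ and replaces $(P_\lambda^-)^2$ by $-Q+c_-+\lambda/d_H^-$ (with your $c_-$). It then splits the result into two pieces:
\begin{itemize}
\item $\sqrt{-Q}(P_\lambda^-)^{-1}\,\sqrt{-Q}(P_\lambda^+)^{-1}$, bounded via the Dore--Yakubov estimate for $A^{1/2}(A+\mu)^{-1/2}$;
\item $(c_-+\lambda/d_H^-)(P_\lambda^-)^{-1}(P_\lambda^+)^{-1}$, bounded via $\|(P_\lambda^\pm)^{-1}\|\le C(1+|\lambda|)^{-1/2}$.
\end{itemize}
This needs only sectoriality-level facts (fractional powers and resolvent bounds).

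\textbf{Your route.} You reduce everything to the scalar comparability of $|z-\lambda_-|$ and $|z-\lambda_+|$, which you obtain cleanly from Proposition~\ref{Prop DoreLabbas}. You then transfer it to the operators through a bounded $H^\infty$-calculus. This shows transparently why the two square roots are comparable uniformly in $\lambda$. It is also the same tool the paper tacitly relies on for the uniform bound on $D_\lambda^{-1}$ in Proposition~\ref{Prop inv Det}. The price is heavier machinery.

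\textbf{Points to repair.}
\begin{itemize}
\item BIP does not in general imply a bounded $H^\infty$-calculus, so $-Q\in$ BIP$(X,\eta)$ is not a valid justification. What you actually need is a bounded $H^\infty$-calculus for $-Q-(\pi^2-\delta)I$ on small sectors. This does hold for the Dirichlet Laplacian on $L^p(0,1)$ (e.g.\ by the Marcinkiewicz multiplier theorem applied to sine series), but it must be cited or proved as such.
\item Your lower bounds were computed on $\pi^2+S_{(\pi-\varepsilon)/3}$. They must be redone on $\pi^2-\delta+S_{(\pi-\varepsilon)/3}$, which works for $0<\delta<\pi^2-r_0$.
\item You also need the composition rule of the functional calculus to identify $(\sqrt{\cdot-\lambda_\pm})(-Q)$ with $(-Q-\lambda_\pm I)^{1/2}$.
\end{itemize}
With these three points addressed, your proof is complete.
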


\begin{proof}
We have 
$$\begin{array}{lll}
P_\lambda^- (P_\lambda^+)^{-1} &=& \dis (P_\lambda^-)^2 (P_\lambda^-)^{-1} (P_\lambda^+)^{-1} = \left(-Q^- + \frac{\lambda}{d_H^-} I\right)(P_\lambda^-)^{-1}(P_\lambda^+)^{-1} \\ \ecart
& =& \dis -Q (P_\lambda^-)^{-1} (P_\lambda^+)^{-1} + \left(\frac{1-\sigma_H^-(1+\nu f_H^-)}{d_H^-} + \frac{\lambda}{d_H^-}\right) (P_\lambda^-)^{-1} (P_\lambda^+)^{-1} \\ \ecart
&=& \dis \sqrt{-Q} (P_\lambda^-)^{-1} \sqrt{-Q} (P_\lambda^+)^{-1} + \left(\frac{1-\sigma_H^-(1+\nu f_H^-)}{d_H^-} + \frac{\lambda}{d_H^-}\right) (P_\lambda^-)^{-1} (P_\lambda^+)^{-1},
\end{array}$$
hence
$$\begin{array}{lll}
\dis \|P_\lambda^- (P_\lambda^+)^{-1}\|_{\L(X)} & \leqslant & \dis \| \sqrt{-Q}(P_\lambda^-)^{-1}\|_{\L(X)} \|\sqrt{-Q} (P_\lambda^+)^{-1}\|_{\L(X)} \\ \ecart
&& \dis + \left(\frac{|1-\sigma_H^-(1+\nu f_H^-)|}{d_H^-} + \frac{|\lambda|}{d_H^-}\right) \|(P_\lambda^-)^{-1}\|_{\L(X)} \|(P_\lambda^+)^{-1}\|_{\L(X)}.
\end{array}$$
From \eqref{estim G-1 et H-1}, there exists $C>0$, independent of $\lambda$, such that
$$\frac{|1-\sigma_H^-(1+\nu f_H^-)|}{d_H^-} \, \|(P_\lambda^-)^{-1}\|_{\L(X)} \|(P_\lambda^+)^{-1}\|_{\L(X)} \leqslant \frac{C}{1 + |\lambda|} \leqslant C,$$
and
$$\frac{|\lambda|}{d_H^-} \, \|(P_\lambda^-)^{-1}\|_{\L(X)} \|(P_\lambda^+)^{-1}\|_{\L(X)} \leqslant \frac{C \,|\lambda|}{1+ |\lambda|} \leqslant C.$$
We have
$$\sqrt{-Q} (P_\lambda^-)^{-1} = -(-Q)^{1/2} \left( -Q + \frac{1-\sigma _{H}^{-}(1+\nu f_H^-)}{d_{H}^{-}}I + \frac{\lambda }{d_{H}^{-}}I \right) ^{-1/2},$$
and
$$\sqrt{-Q} (P_\lambda^+)^{-1} = -(-Q)^{1/2} \left( -Q + \frac{1-\sigma_{H}^{+}(1+f_H^+)}{d_{H}^{+}}I + \frac{\lambda }{d_{H}^{+}}I \right)^{-1/2},$$
then using \cite{dore_yakubov2000}, Lemma~2.6, (a), p. 104, there exists $C > 0$ such that
$$\|\sqrt{-Q} (P_\lambda^-)^{-1}\|_{\L(X)} \leqslant C \quad \text{and} \quad
\|\sqrt{-Q}(P_\lambda^+)^{-1}\|_{\L(X)} \leqslant C.$$
In the same way, we obtain that
$$\|P_\lambda^+ (P_\lambda^-)^{-1}\|_{\L(X)} \leqslant C.$$
\end{proof}
We recall that the solution $h_{I}$ is given by: 
\begin{equation*}
\begin{array}{lll}
h_{I}(x) & = & 2\,\beta _{S}\,D_{\lambda }^{-1}\,P_{\lambda}^{+}(P_{\lambda }^{-})^{-1}e^{-xP_{\lambda }^{-}}\left( I-e^{2(x+\ell)P_{\lambda }^{-}}\right) R_{S}  \\ \ecart 
&& -\beta _{S}\,D_{\lambda }^{-1}\,P_{\lambda }^{+}(P_{\lambda}^{-})^{-1} e^{-x P_{\lambda }^{-}}\left( I-e^{2(x+\ell )P_{\lambda}^{-}}\right) \left(I+e^{2LP_{\lambda }^{+}}\right) R_{I}  \\ \ecart 
&& +\beta _{I}\,D_{\lambda }^{-1}e^{-xP_{\lambda }^{-}}\left(I-e^{2(x+\ell) P_{\lambda}^{-}} \right) \left( I-e^{2LP_{\lambda}^{+}}\right) R_{I}  \\ \ecart 
&& -e^{(x+\ell )P_{\lambda }^{-}}w_{I}(g_{I})(-\ell ) \\ \ecart 
&& +w_{I}(g_{I})(x),
\end{array}
\end{equation*}
where 
\begin{equation*}
w_{I}(g_{I})(x)=\frac{1}{2}(P_{\lambda }^{-})^{-1}\int_{-\ell
}^{x}e^{(x-t)P_{\lambda }^{-}}g_{I}(t)~dt+\frac{1}{2}(P_{\lambda
}^{-})^{-1}\int_{x}^{0}e^{(t-x)P_{\lambda }^{-}}g_{I}(t)~dt,
\end{equation*}
and 
\begin{equation*}
\left\{ 
\begin{array}{lll}
R_{I} & = & \dis\frac{1}{2}(P_{\lambda }^{-})^{-1}\int_{-\ell
}^{0}e^{-tP_{\lambda }^{-}}\left( I-e^{2(t+\ell )P_{\lambda }^{-}}\right)
g_{I}(t)~dt \\ 
\ecart R_{S} & = & \dis\frac{1}{2}(P_{\lambda
}^{+})^{-1}\int_{0}^{L}e^{tP_{\lambda }^{+}}\left( I-e^{2(L-t)P_{\lambda
}^{+}}\right) g_{S}(t)~dt.%
\end{array}%
\right. 
\end{equation*}
Since $P_{\lambda }^{+}$ and $P_{\lambda }^{-}$ generate bounded
analytic semigroups, there exists $C>0$, independent of $%
\lambda $, such that 
\begin{equation*}
\left\Vert I-e^{2(x+\ell )P_{\lambda }^{-}}\right\Vert _{\L (X)}\leqslant
C,\quad \left\Vert I-e^{2LP_{\lambda }^{+}}\right\Vert _{\L (X)}\leqslant
C\quad \text{and}\quad \left\Vert I+e^{2LP_{\lambda }^{+}}\right\Vert _{\L %
(X)}\leqslant C.
\end{equation*}
Then, for almost every $x\in (-\ell ,0)$, we have 
\begin{equation*}
\begin{array}{lll}
\Vert h_{I}(x)\Vert _{X} & \leqslant  & 2\,\beta _{S}\Vert D_{\lambda}^{-1}\Vert _{\L (X)}\Vert P_{\lambda }^{+}(P_{\lambda }^{-})^{-1}\Vert _{\L(X)}\left\Vert I-e^{2(x+\ell )P_{\lambda }^{-}}\right\Vert _{\L (X)}\Vert e^{-xP_{\lambda}^{-}}R_{S}\Vert _{X}  \\ \ecart 
&& +\beta _{S}\,\Vert D_{\lambda }^{-1}\Vert _{\L (X)}\Vert P_{\lambda }^{+}(P_{\lambda }^{-})^{-1}\Vert _{\L (X)}\left\Vert I-e^{2(x+\ell )P_{\lambda }^{-}}\right\Vert _{\L (X)}\left\Vert I+e^{2LP_{\lambda }^{+}}\right\Vert _{\L (X)}\Vert e^{-xP_{\lambda}^{-}}R_{I}\Vert_{X} \\  \ecart 
&& +\beta _{I}\Vert D_{\lambda }^{-1}\Vert _{\L (X)}\left\Vert I-e^{2(x+\ell )P_{\lambda }^{-}}\right\Vert _{\L (X)}\left\Vert I-e^{2LP_{\lambda }^{+}}\right\Vert _{\L (X)}\Vert e^{-xP_{\lambda}^{-}}R_{I}\Vert _{X}   \\ \ecart &  & +\Vert e^{(x+\ell )P_{\lambda }^{-}}w_{I}(g_{I})(-\ell )\Vert_{X}+\Vert w_{I}(g_{I})(x)\Vert _{X}  \\ \\

& \leqslant  & C\left( \Vert e^{-xP_{\lambda }^{-}}R_{S}\Vert _{X}+\Vert e^{-xP_{\lambda }^{-}}R_{I}\Vert _{X}\right) +\Vert e^{(x+\ell )P_{\lambda}^{-}}w_{I}(g_{I})(-\ell )\Vert _{X}+\Vert w_{I}(g_{I})(x)\Vert _{X}; 
\end{array}
\end{equation*}
hence
\begin{equation*}
\begin{array}{lll}
\Vert h_{I}\Vert _{L^{p}(-\ell ,0;X)} & \leqslant  & \dis C\left( \Vert
e^{-\cdot P_{\lambda }^{-}}R_{I}\Vert _{L^{p}(-\ell ,0;X)}+\Vert e^{-\cdot
P_{\lambda }^{-}}R_{S}\Vert _{L^{p}(-\ell ,0;X)}\right)  \\ 
\ecart &  & \dis+\Vert e^{(\cdot +\ell )P_{\lambda }^{-}}w_{I}(g_{I})(-\ell
)\Vert _{L^{p}(-\ell ,0;X)}+\Vert w_{I}(g_{I})(\cdot )\Vert _{L^{p}(-\ell
,0;X)}. 
\end{array}
\end{equation*}
From \eqref{estim G-1 et H-1} and \refL{Lem LMT 2}, there exists $C>0$, independent of $\lambda $, such that 
\begin{equation*}
\begin{array}{lll}
\Vert e^{-\cdot P_{\lambda }^{-}}R_{I}\Vert _{L^{p}(-\ell ,0;X)} & \leqslant 
& \dis\frac{C}{\sqrt{1+|\lambda |}}\left\Vert e^{-\cdot P_{\lambda
}^{-}}\int_{-\ell }^{0}e^{-tP_{\lambda }^{-}}\left( I-e^{2(t+\ell
)P_{\lambda }^{-}}\right) g_{I}(t)~dt\right\Vert _{L^{p}(-\ell ,0;X)} \\ 
\ecart & \leqslant  & \dis\frac{C}{1+|\lambda |}\left\Vert \left(
I-e^{2(\cdot +\ell )P_{\lambda }^{-}}\right) g_{I}\right\Vert _{L^{p}(-\ell
,0;X)} \\ 
\ecart & \leqslant  & \dis\frac{C}{1+|\lambda |}\Vert g_{I}\Vert
_{L^{p}(-\ell ,0;X)}.
\end{array}
\end{equation*}
Similarly, we have 
\begin{equation*}
\begin{array}{lll}
\Vert e^{-\cdot P_{\lambda }^{-}}R_{S}\Vert _{L^{p}(-\ell ,0;X)} & \leqslant 
& \dis\frac{C}{\sqrt{1+|\lambda |}}\left\Vert e^{-\cdot P_{\lambda
}^{-}}\int_{0}^{L}e^{tP_{\lambda }^{+}}\left( I-e^{2(L-t)P_{\lambda
}^{+}}\right) g_{S}(t)~dt\right\Vert _{L^{p}(0,L;X)} \\ 
\ecart & \leqslant  & \dis\frac{C}{1+|\lambda |}\left\Vert \left(
I-e^{2(L-\cdot )P_{\lambda }^{+}}\right) g_{S}\right\Vert _{L^{p}(0,L;X)} \\ 
\ecart & \leqslant  & \dis\frac{C}{1+|\lambda |}\Vert g_{S}\Vert
_{L^{p}(0,L;X)},%
\end{array}%
\end{equation*}%
and 
\begin{equation*}
\Vert e^{(\cdot +\ell )P_{\lambda }^{-}}w_{I}(g_{I})(-\ell )\Vert
_{L^{p}(-\ell ,0;X)}\leqslant \frac{C}{1+|\lambda |}\Vert g_{I}\Vert
_{L^{p}(-\ell ,0;X)}.
\end{equation*}%
Then, due to \eqref{estim G-1 et H-1} and \refL{Lem LMT}, there exists $C>0$, independent of $\lambda $, such that 
\begin{equation*}
\Vert w_{I}(g_{I})(\cdot )\Vert _{L^{p}(-\ell ,0;X)}\leqslant \frac{C}{1+|\lambda |/d_{H}^{-}}\Vert g_{I}\Vert _{L^{p}(-\ell ,0;X)}\leqslant 
\frac{C}{1+|\lambda |}\Vert g_{I}\Vert _{L^{p}(-\ell ,0;X)}.
\end{equation*}
Finally, there exists $C>0$ such that, for all $\lambda \in S_{4(\pi -\varepsilon_{0})/7}$, we obtain 
\begin{equation*}
\Vert h_{I}\Vert _{L^{p}(-\ell ,0;X)}\leqslant \frac{C}{1+|\lambda |}\left(
\Vert g_{I}\Vert _{L^{p}(-\ell ,0;X)}+\Vert g_{S}\Vert
_{L^{p}(0,L;X)}\right) ,
\end{equation*}%
and 
\begin{equation*}
\Vert h_{S}\Vert _{L^{p}(0,L;X)}\leqslant \frac{C}{1+|\lambda |}\left( \Vert
g_{I}\Vert _{L^{p}(-\ell ,0;X)}+\Vert g_{S}\Vert _{L^{p}(0,L;X)}\right) ;
\end{equation*}%
hence
\begin{equation*}
\begin{array}{lll}
\dis\Vert h\Vert _{L^{p}(\Omega )} & = & \dis\Vert h_{I}\Vert _{L^{p}(\Omega
_{I})}+\Vert h_{S}\Vert _{L^{p}(\Omega _{S})} \\ \ecart 
& = & \dis\Vert h_{I}\Vert _{L^{p}(-\ell ,0;X)}+\Vert h_{S}\Vert
_{L^{p}(0,L;X)} \\ \ecart 
& \leqslant  & \dis\frac{C}{1+|\lambda |}\left( \Vert g_{I}\Vert_{L^{p}(-\ell ,0;X)}+\Vert g_{S}\Vert _{L^{p}(0,L;X)}\right)  \\ \ecart 
& \leqslant  & \dis\frac{C}{1+|\lambda |}\left( \Vert n_{I}\Vert_{L^{p}(-\ell ,0;X)}+\Vert n_{S}\Vert _{L^{p}(0,L;X)}\right)  \\ \ecart 
& \leqslant  & \dis\frac{C}{1+|\lambda |}\left( \Vert n_{I}\Vert_{L^{p}(\Omega _{I})}+\Vert n_{S}\Vert _{L^{p}(\Omega _{S})}\right)  \\ \ecart 
& \leqslant  & \dis\frac{C}{1+|\lambda |}\,\Vert n\Vert_{L^{p}(\Omega )}.
\end{array}
\end{equation*}
The same techniques are used, without assumption \eqref{hyp const}, in order to obtain the estimates concerning the juveniles and the adults, that is
\begin{equation*}
\Vert j\Vert _{L^{p}(\Omega )}=\Vert j_{I}\Vert _{L^{p}(\Omega _{I})}+\Vert
j_{S}\Vert _{L^{p}(\Omega _{S})}\leqslant \frac{C}{1+|\lambda |}\,\Vert
k\Vert _{L^{p}(\Omega )},
\end{equation*}%
and 
\begin{equation*}
\Vert a\Vert _{L^{p}(\Omega )}=\Vert a_{I}\Vert _{L^{p}(\Omega _{I})}+\Vert
a_{S}\Vert _{L^{p}(\Omega _{S})}\leqslant \frac{C}{1+|\lambda |}\,\Vert
m\Vert _{L^{p}(\Omega )}.
\end{equation*}%
Therefore, in $\E=\left( L^{p}(\Omega )\right) ^{3}$, we conclude that 
\begin{equation*}
\left\Vert \left( 
\begin{array}{c}
j \\ 
a \\ 
h
\end{array}
\right) \right\Vert _{\E}=\max \left( \Vert j\Vert _{L^{p}(\Omega )},\Vert
a\Vert _{L^{p}(\Omega )},\Vert h\Vert _{L^{p}(\Omega )}\right) \leqslant 
\frac{C}{1+|\lambda |}\left\Vert \left( 
\begin{array}{c}
k \\ 
m \\ 
n%
\end{array}%
\right) \right\Vert _{\E},
\end{equation*}
which complete the proof of Theorem \ref{Th principal}. \\

Furthermore, it would be interesting to study a system equivalent to (\ref{Syst Reaction-diffusion}) with density dependance using , for example, Ricker recruitment function for hosts. \\
Regarding the vectors, no condition is required on them. And it would be interesting to add density dependence.

\section{Conclusion}

In this work, we have modeled the sylvatic transmission of Chagas disease by incorporating vertical transmission within host populations. The primary originality of this research lies in considering, for the first time, that a healthy individual (vector or host) can move from an infected area toward a safe one via a Brownian motion, which better reflects field observations. 
The constructed system of reaction-diffusion equations was formulated as the following evolution equation:
\begin{equation}\label{Pb evolution 2}
\left\{ 
\begin{array}{lll}
V^{\prime }(t) & = & (\L + \B) V(t) \\ 
\ecart V(0) & = & V_{0},
\end{array}
\right.
\end{equation}
for a suitable initial condition $V_0$. 

From a mathematical standpoint, the originality of this work is the generation of an analytic semigroup by $\L$ (see Theorem \ref{Th principal} and the link with sectoriality in Definition \ref{Def op sect}), specifically proving that $-\L$ is sectorial with an angle strictly less than $\pi/2$. This result, along with the satisfaction of condition (\ref{hyp const}), depends solely on demographic parameters, spatial dispersion, and the vertical transmission rate of the hosts. Notably, if the host population is density-dependent, condition (\ref{hyp const}) is naturally satisfied.

Establishing these properties, particularly the invertibility of the determinant $D_\lambda$ defined in section \ref{Sect Calcul Det}, required several advanced mathematical tools:
\begin{itemize}
\item the $H^\infty$-calculus,
\item specific analytic semigroup properties,
\item the notion of BIP operators,
\item real interpolation space theory.
\end{itemize}
Using the Kato's perturbation theory, see \cite{kato1980}, Chapter 4, p. 189 and Theorem 2.4, p. 499, we can prove that $\L + \B$ generates an analytic semigroup.\\
Note that $\E=\left( L^{p}(\Omega )\right) ^{3}$ is a UMD space, see \cite{bourgain1983} and \cite{burkholder1981}, which implies that $L^p(0,T;\E)$, $p\in (1,+\infty)$ is also a UMD space.
On the other hand, we can also show the BIP character of $-(\L +\B)$ in $\E$.  \\ 
Finally, these results allow the application of the Dore-Venni sum theory \cite{dore-venni1987} to achieve the complete resolution of the evolution problem \eqref{Pb evolution 2}. It should be emphasized, however, that the numerical exploitation of model (\ref{Syst Reaction-diffusion}) remains contingent upon the subsequent proof of uniqueness and regularity of the solution.

\section*{Acknowledgments} 
The authors would like to thank very much the referee for the valuable comments and corrections which have helped us to improve this paper.

\end{document}